\newcommand{\forces}{\Vdash}
\theoremstyle{Case1}
\theoremstyle{Case2}
\newcommand{\zet}{\mathbb{Z}}
\newtheorem*{rigprob*}{Rigidity Problem for Uniform Roe Algebras}
\newtheorem*{rigprobcorona*}{Rigidity Problem for Uniform Roe Coronas}
\newcommand{\ZFC}{\mathrm{ZFC}}
\newcommand{\CH}{\mathrm{CH}}
\newcommand{\MA}{\mathrm{MA}}
\newcommand{\PFA}{\mathrm{PFA}}
\newtheorem{theorem}{Theorem}[section]
\newtheorem*{theorem*}{Theorem}
\newtheorem{proposition}[theorem]{Proposition}
\newtheorem*{proposition*}{Proposition}
\newtheorem{lemma}[theorem]{Lemma}
\newtheorem*{lemma*}{Lemma}
\newtheorem{corollary}[theorem]{Corollary}
\newtheorem*{corollary*}{Corollar}
\newtheorem*{fact*}{Fact}
\theoremstyle{definition}
\newtheorem{definition}[theorem]{Definition}
\newtheorem*{definition*}{Definition}
\newtheorem{claim}[theorem]{Claim}
\newtheorem*{claim*}{Claim}
\newtheorem*{conjecture*}{Conjecture}
\newtheorem{theoremi}{Theorem}
\theoremstyle{remark}
\newtheorem*{notation*}{Notation}
\newtheorem*{example*}{Example}
\newtheorem*{remark*}{Remark}
\newtheorem*{note*}{Note}
\newtheorem*{question*}{Question}
\DeclareMathOperator{\cof}{cof}
\DeclareMathOperator{\dom}{dom}
\DeclareMathOperator{\supp}{supp}
\DeclareMathOperator{\im}{im}
\newcounter{my_enumerate_counter}
\newcommand{\pushcounter}{\setcounter{my_enumerate_counter}{\value{enumi}}}
\newcommand{\popcounter}{\setcounter{enumi}{\value{my_enumerate_counter}}}
\begin{document}

\title{Non-vanishing higher derived limits}%
\author[B. Veli\v{c}kovi\'c]{Boban Veli\v{c}kovi\'c}
\address[B. Veli\v{c}kovi\'c]{
Institut de Math\'ematiques de Jussieu - Paris Rive Gauche (IMJ-PRG)\\
Universit\'e de Paris\\
B\^atiment Sophie Germain\\
8 Place Aur\'elie Nemours \\ 75013 Paris, France}
\email{boban@math.univ-paris-diderot.fr}
\urladdr{http://www.logique.jussieu.fr/~boban/}
\author[A. Vignati]{Alessandro Vignati}
\address[A. Vignati]{
Institut de Math\'ematiques de Jussieu - Paris Rive Gauche (IMJ-PRG)\\
Universit\'e de Paris \\
B\^atiment Sophie Germain\\
8 Place Aur\'elie Nemours \\ 75013 Paris, France}
\email{alessandro.vignati@imj-prg.fr}
\urladdr{http://www.automorph.net/avignati}

\begin{abstract}
In the study of strong homology Marde\v{s}i\'c and Prasolov isolated a certain inverse system of abelian groups $\mathbf A$ indexed by elements of $\omega^\omega$. They showed that if strong homology is additive on a class of spaces containing closed subsets of Euclidean spaces then the higher derived limits $\lim^n \mathbf A$ must vanish, for $n>0$. They also proved that under the Continuum Hypothesis $\lim^1 \mathbf A \neq 0$. The question whether $\lim^n \mathbf A$ vanishes, for $n>0$, has attracted considerable interest from set theorists. Dow, Simon and Vaughan showed that under PFA $\lim^1 \mathbf A =0$. Bergfalk show that it is consistent that $\lim^2\mathbf A$ does not vanish. Later Bergfalk and Lambie-Hanson showed that, modulo a weakly compact cardinal, it is relatively consistent with $\ZFC$ that $\lim^n \mathbf A =0$, for all $n$. The large cardinal assumption was recently removed by Bergfalk, Hru\v{s}ak and Lambie-Henson. We complete the picture by showing that, for any $n>0$, it is relatively consistent with $\ZFC$ that $\lim^n \mathbf A \neq 0$. 
\end{abstract}

\keywords{derived limits, additivity, strong homology, weak diamond}
\subjclass[2010]{Primary: 03E35, 03E75, 18E25, 55Nxx}
\maketitle

\maketitle

\section{Introduction}
We consider certain directed systems of abelian groups, their derived inverse limits, and how these limits depend on the additional hypotheses of set theory. Derived inverse limits associated to directed systems are often studied from a category theory point of view (see e.g., Marde\v{s}i\'c's book \cite{Mardesic.Book}) for their connections with strong homology, and consequently with algebraic topology. A recent work of Bergfalk (\cite{Bergfalk.Alephs}) further developed connections with phenomena in infinite combinatorics such as walks on ordinals (after Todor\v{c}evi\'c' \cite{Todorcevic2007}), and consequently $C$-sequences and traces. This work allowed him to glimpse at multidimensional versions of Todor\v{c}evi\'c's minimal walks.

We are primarily interested in the inverse system $\mathbf A$ and its derived inverse limits ${\lim}^n\mathbf A$, for $n> 0$. (For all technical definitions, see \S\ref{sec:prel} and \S\ref{S.A}). 
%We want to understand whether ${\lim}^n\mathbf A$ vanishes or not; this was linked with the existence of certain coherent nontrivial families of functions in $\NN^\NN$, where coherence and triviality are intended `modulo finite'. 
The original motivation for studying ${\lim}^n\mathbf A$ is in that additivity of strong homology on certain topological spaces implies that ${\lim}^n\mathbf A$ must vanish for each $n>0$, see \cite[Theorem 8]{MardesicPrasolov}. It was also shown in \cite{MardesicPrasolov} that the Continuum Hypothesis $\CH$ implies that ${\lim}^1\mathbf A\neq 0$. The hypothesis of $\CH$ was weakened to that of $\mathfrak d$, the dominating number, being $\aleph_1$, see \cite[Theorem 2.4]{Dow-Simon-Vaughan}. In the same paper, it was shown that Shelah's Proper Forcing Axiom ($\PFA$) implies that ${\lim}^1\mathbf A=0$. Hence the vanishing of ${\lim}^1\mathbf A$ is independent of the axioms of $\ZFC$. In general, Goblot's Theorem (see Theorem~\ref{thm:goblot}) implies that if $\mathfrak d=\aleph_n$, then ${\lim}^{i}\mathbf A=0$ for all $i\geq n+1$. In 2017 Bergfalk (\cite{Bergfalk2017}) showed that it is consistent that ${\lim}^2\mathbf A$ does not vanish. Later Bergfalk and Lambie-Henson \cite{BergfalkLH} showed that it is relatively consistent, modulo a weakly compact cardinal, that all the ${\lim}^n\mathbf A$, for $n\geq 1$, simultaneously vanish. The use of a weakly compact cardinal was later eliminated by Bergfalk, Hru\v{s}ak and Lambie-Henson in \cite{Bergfalk2021simutaneously}. Extending \cite{BergfalkLH}, Bannister, Bergfalk and Moore (\cite{BanBergMoore}) showed that, modulo a weakly compact cardinal, it is consistent that strong homology for locally compact separable metric spaces is additive. It is not clear if any large cardinal assumptions are needed for this result. Whether higher derived inverse limits of $\mathbf A$ could be non-vanishing remained an open question, which we answer in our main result.

\begin{theoremi}
For every $n\geq 1$, it is relatively consistent with $\ZFC$ that ${\lim}^n\mathbf A\neq 0$. 
\end{theoremi}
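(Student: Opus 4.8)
The plan is to build a forcing extension in which $\lim^n \mathbf A \neq 0$ for the prescribed $n$, by pushing the cardinal arithmetic so that $\mathfrak{d} = \aleph_n$ while simultaneously ensuring that a nontrivial $n$-cocycle survives. Recall the basic mechanism behind the classical results: the Mardešić–Prasolov system $\mathbf A$ is indexed by $\omega^\omega$, and its derived limits $\lim^n \mathbf A$ are computed as cohomology of a cochain complex whose $n$-cochains are (coherent families of) functions parametrized by increasing $n$-tuples in $\omega^\omega$. The cleanest way I would proceed is to reduce $\lim^n \mathbf A$ to a combinatorial cohomology group $H^n(\omega^\omega; \mathbf A)$ — an $n$-coherence/$n$-triviality dichotomy — so that the statement ``$\lim^n \mathbf A \neq 0$'' becomes ``there exists a nontrivial $n$-coherent family.''

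\medskip

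First I would fix a scale, i.e.\ a $\mathord{\le}^*$-increasing and cofinal sequence $\langle f_\alpha : \alpha < \mathfrak{d}\rangle$ in $\omega^\omega$, which exists once we arrange $\mathfrak{b} = \mathfrak{d} = \aleph_n$; by Goblot's theorem (Theorem~\ref{thm:goblot}) this is the only regime in which $\lim^n \mathbf A$ can be nonzero, so it is forced on us. Along this scale the $n$-dimensional coherence problem becomes an $(n{-}1)$-dimensional walk/trace problem on $\aleph_n$, in the spirit of Bergfalk's reduction of higher derived limits to multidimensional minimal walks. The second step is to identify the precise combinatorial principle whose failure yields a nontrivial coherent family: I expect this to be a higher-dimensional, parametrized instance of the weak diamond $\Phi$ (listed among the keywords), an $\aleph_{n-1}$-indexed ``$n$-dimensional $\Phi$'' that, when it holds, allows one to diagonalize against all potential trivializations of an $n$-cochain and thereby construct a coherent-but-nontrivial family witnessing $\lim^n \mathbf A \neq 0$.

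\medskip

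The construction itself is an $\aleph_n$-length recursion building the coherent family $\Phi = \langle \varphi_s \rangle$ one level at a time: at each stage I would use an instance of weak diamond to guess a candidate trivialization on an initial segment and then choose the next value of $\Phi$ to defeat that candidate, while maintaining coherence modulo finite (or modulo the relevant ideal) against all previously defined entries. The bookkeeping must ensure (i) global coherence of the $n$-cocycle, which is a $\Delta$-system / finite-support amalgamation of the coherence constraints across $n$-tuples, and (ii) that no global trivializing $(n{-}1)$-cochain exists, which is exactly where the $\aleph_n$-many weak-diamond guesses are consumed. To force the ground model into the right shape I would iterate a Cohen-type or Hechler-type forcing of length $\aleph_n$ over a model of GCH (adding dominating reals to set $\mathfrak{d} = \aleph_n$) and verify that the iteration preserves the weak-diamond instance — weak diamond principles are robust under such c.c.c.\ or $<\!\aleph_n$-closed iterations, so this is plausible.

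\medskip

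The main obstacle, I expect, is the passage from dimension $n{-}1$ to dimension $n$ in the coherence bookkeeping: keeping an $n$-cocycle coherent across all increasing $n$-tuples while simultaneously killing every candidate $(n{-}1)$-cochain trivialization requires a genuinely multidimensional diagonalization, and the naive inductive hypothesis will not carry because a local defeat of one trivialization can silently introduce a coboundary elsewhere. The technical heart will be formulating an $n$-dimensional version of weak diamond strong enough to run this recursion yet weak enough to be forced (or to follow from a known combinatorial configuration at $\aleph_n$), together with proving that the resulting family is not merely locally nontrivial but represents a nonzero cohomology class — i.e.\ verifying that the obstruction we build is a genuine, not artificial, coboundary obstruction. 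Establishing this final nonvanishing, rather than the construction of the family, is where the real work lies.
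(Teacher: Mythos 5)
Your proposal correctly locates the target regime ($\mathfrak b=\mathfrak d=\aleph_n$, so that the ideal generated by the sets $a_f$ is generated by a tower of length $\omega_n$) and the general strategy of using a weak-diamond-style guessing principle to defeat all candidate trivializations along a recursion of length $\aleph_n$. But there is a genuine gap, and it is exactly the one you flag yourself at the end: you have no mechanism for actually defeating a guessed trivialization at a given stage, and you defer this to an unspecified ``$n$-dimensional weak diamond'' that you neither formulate nor force. The paper's resolution is different and is the heart of the argument: no new higher-dimensional principle is needed. Instead one proves a stepping-up lemma (Lemma~\ref{stepping-up}) by induction on the dimension. One isolates the statement $\varphi_k(G)$ (``every ideal generated by a tower of length $\omega_k$ has $\lim^k\mathbf G_{\mathcal I}\neq 0$''), proves $\varphi_1(\mathbb Z)$ outright in $\ZFC$ by a Hausdorff-gap-type construction (Propositions~\ref{ideals-Z} and~\ref{thm:hausgap} --- a base case your sketch does not address at all), and then shows that $\varphi_k(G)$ together with the \emph{ordinary} weak diamond on the stationary set $S^k_{k+1}\subseteq\omega_{k+1}$ yields $\varphi_{k+1}(G)$. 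The point you identify as the obstacle --- that locally defeating one trivialization can silently reintroduce a coboundary --- is handled precisely because at a stage $\lambda$ of cofinality $\aleph_k$ the inductive hypothesis $\varphi_k(G)$ supplies a coherent \emph{nontrivial} $(k-1)$-dimensional sequence $\bar z$ on the initial segment; adding $\bar z$ to a chosen trivialization produces two extensions $\bar x^{s,0},\bar x^{s,1}$ such that no single partial trivializer extends to both, and the weak diamond then selects a branch of $2^{\omega_{k+1}}$ along which every global trivializer is killed stationarily often. Without this supply of lower-dimensional nontrivial coherent sequences your single top-level recursion has nothing to branch on.

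The forcing side of your plan also does not quite work as stated. You propose to iterate Hechler forcing and ``verify that the iteration preserves the weak-diamond instance,'' but a c.c.c.\ iteration of length $\aleph_n$ over a model of $\GCH$ will not by itself give you ${\rm w}\diamondsuit(S^k_{k+1})$ for all $k<n$: since ${\rm w}\diamondsuit(\kappa^+)$ is essentially $2^\kappa<2^{\kappa^+}$, you need to arrange $2^{\aleph_k}<2^{\aleph_{k+1}}$ simultaneously for all $k<n$ while keeping $\mathfrak b=\mathfrak d=\aleph_n$. The paper does this with a \emph{product} $\mathbb H_{\omega_n}\times\mathbb C_1\times\cdots\times\mathbb C_n$, where $\mathbb C_k$ adds $\omega_{n+k}$ Cohen subsets of $\omega_k$, and proves a dedicated lemma (Lemma~\ref{weak diamond}) that such a product forces ${\rm w}\diamondsuit(S)$ on each prescribed stationary set; this is a creation argument via a product decomposition, not a preservation argument for an iteration. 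In summary: your outline names the right ingredients but omits the two results that constitute the actual proof --- the $\ZFC$ base case $\varphi_1$ and the dimension-by-dimension stepping-up lemma --- and replaces them with an undefined principle whose consistency is itself the open problem.
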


In order to prove this result we isolate a combination of combinatorial principles that together imply that $\lim^n \mathbf A \neq 0$. One of these principles is a version of weak diamond ${\rm w}\diamondsuit$ first studied by Devlin and Shelah \cite{Devlin-Shelah}. This principle has consequences similar to those of the usual diamond principle, but is compatible with a wide range cardinal arithmetic behaviour. It was shown in \cite{Devlin-Shelah} that ${\rm w}\diamondsuit(\omega_1)$ is equivalent to $2^{\aleph_0} < 2^{\aleph_1}$. A similar proof show that if $\kappa$ is regular then ${\rm w}\diamondsuit(\kappa^+)$ is equivalent to $2^\kappa < 2^{\kappa^+}$. For our result we require that weak diamond holds on a specific stationary subset of $\omega_{k+1}$, for all $k < n$. Namely, let $S^k_{k+1}= \{ \alpha < \omega_{k+1}\colon {\rm cof}(\alpha) = \omega_k \}$. We show that if $\mathfrak b$ and $\mathfrak d$ are both equal to $\aleph_n$, and ${\rm w}\diamondsuit(S^k_{k+1})$ holds, for every $k < n$, then $\lim^n \mathbf A \neq 0$. We also show that the combination of these combinatorial principles is relatively consistent with $\ZFC$, for any given $n>0$.

The paper is organized as follows. First, in \S\ref{sec:prel} we introduce the main objects of interest, directed systems of abelian groups and their derived inverse limits. As these objects were often treated in a category theory setting, we spend some time to prove some of the classical results about derived inverse limits from a set theoretic point of view. In particular we prove the flasque version of Goblot's theorem Theorem~\ref{thm:goblot}), and a result of B. Mitchell, Theorem~\ref{thm:mitchell}, saying that the cohomological dimension of $\aleph_n$ is $n+1$, or, in our terms, that there is a directed system $\mathbf G_n$ of cofinality $\aleph_n$ and such that ${\lim}^{n+1}\mathbf G_n\neq 0$. In \S\ref{S.A} we focus on $\mathbf A$ and related inverse systems, and we prove that under the above combinatorial principles $\lim^n \mathbf A$, for $n\geq 1$, does not vanish. Finally, in \S\ref{S.Cons} we show that, for every $n\geq 1$, the combination of the principles required for our main theorem is relatively consistent with $\ZFC$. 

%We conclude the paper with a discussion on what is left to do.

\section{Preliminaries: directed systems and their derived inverse limits}\label{sec:prel}

Let $(\Lambda,\leq)$ be an ordered set, and let $n\geq 0$. We write $\Lambda^{(n)}$ for the set of $\leq$-ordered $n+1$-uples in $\Lambda$. We often confuse $\Lambda^{(0)}$ with $\Lambda$. If $\bar\lambda=(\lambda_0,\ldots, \lambda_{n})\in \Lambda^{(n)}$, $\mu \in\Lambda$, and $i<n$, we write:

\begin{itemize}
\item $\lambda_i$ for the $i$-th element of $\bar \lambda$,
\item $\mu \leq\bar\lambda$ if $\mu \leq\lambda_0$,
\item $\bar\lambda\leq\mu$ if $\lambda_{n}\leq\mu$, and
\item $\bar\lambda^i$ for the sequence $(\lambda_0,\ldots,\lambda_{i-1},\lambda_{i+1},\ldots,\lambda_{n})$ in $\Lambda^{(n-1)}$.
\end{itemize}

\begin{definition}
Let $\Lambda$ be a directed set. We say that 
\[
\mathbf G=\{G_\lambda; p^\mu_\lambda \colon \lambda\leq\mu\in\Lambda\}
\]
is a $\Lambda$-{\em inverse system of abelian groups} if
\begin{itemize}
\item $G_\lambda$ is an abelian group, for all $\lambda\in\Lambda$,
\item $p^\mu_\lambda\colon G_\mu\to G_\lambda$ is a group homomorphism, for all $\lambda \leq \mu\in\Lambda$,
\item if $\lambda\leq\mu\leq\nu$ then $p^\nu_\lambda=p_\lambda^\mu\circ p^\nu_\mu$.
\end{itemize}
If each $p^\mu_\lambda$ is surjective we say that $\mathbf G$ is {\em flasque}. If $M$ is a directed subset of $\Lambda$ we will write $\mathbf G \restriction M$ for the system $\{ G_\lambda; p^\mu_\lambda \colon \lambda\leq\mu\in M \}$.
\end{definition}

Let $\mathbf G$ be a $\Lambda$-inverse system of abelian groups. We will always assume that the individual groups $G_\lambda$ are pairwise disjoint. Let $G = \bigcup_\lambda G_\lambda$. We let ${\rm Fn}(\Lambda^{(n)}, G)$ denote the set of all function from $\Lambda^{(n)}$ to $G$. If $\bar x \in {\rm Fn}(\Lambda^{(n)}, G)$ we write $x_{\bar \lambda}$ for $\bar x (\bar \lambda)$, where $\bar \lambda\in \Lambda^{(n)}$. For $n\geq 0$, define 
\[
\mathbf G^{(n)}=\{\bar x \in {\rm Fn}(\Lambda^{(n)}, G)\colon x _{\bar \lambda} \in G_{\lambda_0}, \mbox{ for all } \bar \lambda \in \Lambda^{(n)} \}.
%=(x_{\bar\lambda})\mid \bar\lambda=(\lambda_0,\ldots,\lambda_n)\in \Lambda^{(n)}, \, x_{\bar\lambda}\in G_{\lambda_0}\}.
\]
Then $\mathbf G^{(n)}$ is an abelian group with operations defined pointwise. Define
\[
\delta^n\colon \mathbf G^{(n-1)}\to \mathbf G^{(n)}
\]
 by letting
\[
\delta^n(\bar x)_{\bar\lambda}=\sum_{1\leq i\leq n}(-1)^ix_{\bar\lambda^i}+p^{\lambda_1}_{\lambda_0}(x_{\bar\lambda^0}),
\]
%\medskip
\noindent for every $\bar\lambda\in \Lambda^{(n)}$. By convention, we set $\mathbf G^{(-1)}=0$ and so $\delta^0=0$. Note that $\delta^{n+1}\circ\delta^n=0$, for every $n\geq 0$. These maps are known as {\em coboundary maps}.
%Most of this paper revolves on studying the following definition.

\begin{definition}
Let $n\geq 1$. An element $\bar x\in \mathbf G^{(n)}$ is {\em coherent} if $\delta^{n+1}(\bar x)=0$. We say that $\bar x\in\mathbf G^{(n)}$ is {\em trivial} if there is $\bar y\in \mathbf G^{(n-1)}$ such that $\delta^{n}(\bar y)=\bar x$. In this case we say that $\bar y$ {\em trivializes} $\bar x$.
\end{definition}

%The following Lemma will be needed
%\begin{lemma}
%Suppose that $\bar x\in \mathbf G^{(n)}$ is coherent, and that $\bar y\in \mathbf G^{(n-1)}$ trivializes $\bar x$. 
%Suppose that $\bar z\in\mathbf G^{(n-1)}$ is coherent. Then $\bar y+\bar z$ trivializes $x$.
%\end{lemma}
For $n\geq 0$, let
\[
{\lim}^{n}\mathbf G=\ker (\delta^{n+1})/\im (\delta^n).
\]
%\medskip
\noindent This is the $n$-th derived inverse limit of $\mathbf G$. Since $\delta^0=0$, ${\lim}^0 \mathbf G$ coincides with $\ker(\delta^1)$, which in turns corresponds to the set of coherent sequences in $\mathbf G^{(0)}$, which is just the inverse limit of $\mathbf G$. In other words
\[
{\lim}^0 \mathbf G=\{ \bar x \in\prod_{\lambda \in \Lambda} G_\lambda\colon \forall \, \lambda\leq\mu \, (x_\lambda=p^\mu_\lambda(x_\mu))\}.
\]
For $n\geq 1$, ${\lim}^n \mathbf G\neq 0$ if and only if there is a coherent nontrivial element of $\mathbf G^{(n)}$.

If $\Lambda$ is an ordered set, a subset $C\subseteq\Lambda$ is {\em cofinal} if for all $\lambda\in\Lambda$ there is $\mu\in C$ with $\lambda\leq \mu$. We denote by $\cof(\Lambda)$ the smallest size of a cofinal subset of $\Lambda$. If $\Lambda$ has a maximum we write $\cof(\Lambda)=\aleph_{-1}$. If $\mathbf G$ is a $\Lambda$-inverse system of abelian groups, we write $\cof(\mathbf G)$ for $\cof(\Lambda)$. Goblot's Theorem gives a constraint on ${\lim}^n\mathbf G$ in case $\cof(\mathbf G)$ is small. (A proof for set theorists can be found in \cite[\S13]{Mardesic.Book}.)

\begin{theorem}[Goblot's Theorem, \cite{Goblot}]\label{thm:goblot}
Let $n\geq -1$. If $\mathbf G$ is an inverse system of abelian groups with $\cof(\mathbf G)=\aleph_n$, then ${\lim}^{i}\mathbf G=0$, for all $i\geq n+2$.\qed
\end{theorem}

Goblot's theorem can be strengthened in case of flasque systems. We present a self-contained proof for completeness. 

\begin{theorem}[Flasque Goblot's Theorem]\label{thm:flasquegoblot}
Let $n\geq 0$. If $\mathbf G$ is a flasque inverse system of abelian groups with $\cof(\mathbf G)=\aleph_n$. Then ${\lim}^k \mathbf G=0$, for all $k\geq n+1$.
\end{theorem}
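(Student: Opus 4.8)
The plan is to isolate a single critical degree and then induct on $n$. By Goblot's Theorem (Theorem~\ref{thm:goblot}), a system with $\cof(\mathbf G)=\aleph_n$ already satisfies ${\lim}^i\mathbf G=0$ for every $i\geq n+2$, so the flasque strengthening amounts to the one additional vanishing ${\lim}^{n+1}\mathbf G=0$. I would therefore prove, by induction on $n$, the full statement $P(n)$: every flasque system of cofinality $\aleph_n$ has ${\lim}^k\mathbf G=0$ for all $k\geq n+1$. Since passing to a cofinal directed subsystem changes neither the derived limits nor flasqueness, at each stage I may assume $|\Lambda|=\aleph_n$.

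For the base case $P(0)$ I would reduce, along a cofinal increasing $\omega$-chain, to a flasque inverse sequence and check ${\lim}^1\mathbf G=0$ by hand. For a coherent $\bar x\in\mathbf G^{(1)}$ the coherence relation reads $x_{(\lambda_0,\lambda_2)}=x_{(\lambda_0,\lambda_1)}+p^{\lambda_1}_{\lambda_0}(x_{(\lambda_1,\lambda_2)})$, so $\bar x$ is determined by its values on consecutive pairs; I would then build a trivialization $\bar y$ recursively along the chain, using surjectivity of each bonding map to solve $p^{i+1}_i(y_{i+1})=y_i+x_{(i,i+1)}$. This is exactly the Mittag-Leffler argument, and it is the only place flasqueness is genuinely spent. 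The vanishing ${\lim}^k\mathbf G=0$ for $k\geq2$ is supplied by Goblot.

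For the inductive step I would write $\Lambda$ as a continuous increasing union $\Lambda=\bigcup_{\alpha<\omega_n}\Lambda_\alpha$ of directed subsets of cofinality $<\aleph_n$, produced by a standard closing-off recursion (at successors adjoin a new point and close under pairwise upper bounds, at limits take unions), with regularity of $\aleph_n$ keeping each $|\Lambda_\alpha|<\aleph_n$. Each $\mathbf G\restriction\Lambda_\alpha$ is flasque of smaller cofinality, so the induction hypothesis gives ${\lim}^k(\mathbf G\restriction\Lambda_\alpha)=0$ for all $k\geq n$. Given a coherent $\bar x\in\mathbf G^{(n+1)}$, I would construct by recursion on $\alpha$ a coherent chain of trivializations $\bar y^\alpha\in(\mathbf G\restriction\Lambda_\alpha)^{(n)}$ with $\delta^{n+1}\bar y^\alpha=\bar x\restriction\Lambda_\alpha$ and $\bar y^\beta\restriction\Lambda_\alpha=\bar y^\alpha$ for $\alpha\leq\beta$; the union $\bar y=\bigcup_\alpha\bar y^\alpha$ is then a global trivialization, since every tuple in $\Lambda^{(n)}$ is finite and hence lands in some $\Lambda_\alpha$.

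The heart of the matter, and the step I expect to be the main obstacle, is the successor stage. Here I first take \emph{some} trivialization $\bar z$ of $\bar x\restriction\Lambda_{\alpha+1}$ (available because ${\lim}^{n+1}(\mathbf G\restriction\Lambda_{\alpha+1})=0$), but $\bar z$ need not extend $\bar y^\alpha$. The difference $\bar w=\bar y^\alpha-\bar z\restriction\Lambda_\alpha$ is a coherent $n$-cochain on $\Lambda_\alpha$, hence — invoking the induction hypothesis in the \emph{critical} degree, ${\lim}^n(\mathbf G\restriction\Lambda_\alpha)=0$ — it is a coboundary $\bar w=\delta^n\bar v$. Extending $\bar v$ to an $(n-1)$-cochain $\bar v'$ on $\Lambda_{\alpha+1}$ by arbitrary values on the new tuples and setting $\bar y^{\alpha+1}=\bar z+\delta^n(\bar v')$ yields a trivialization of $\bar x\restriction\Lambda_{\alpha+1}$ whose restriction to $\Lambda_\alpha$ is exactly $\bar y^\alpha$, because $\delta^n(\bar v')\restriction\Lambda_\alpha=\delta^n\bar v=\bar w$. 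Limit stages are handled by taking unions, which remain coherent by continuity of the filtration. The delicate point throughout is the bookkeeping that keeps the partial trivializations literally restricting to one another, rather than merely agreeing up to coboundary; it is precisely the critical-degree vanishing from the induction hypothesis that lets me absorb the discrepancy $\bar w$ exactly at each successor.
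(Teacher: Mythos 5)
Your proposal is correct and follows essentially the same route as the paper's proof: a Mittag--Leffler recursion along a cofinal $\omega$-chain for the base case, and for the inductive step a continuous filtration $\Lambda=\bigcup_{\alpha<\omega_n}\Lambda_\alpha$ by directed subsets of smaller cofinality, with the successor stage absorbing the discrepancy between the inherited partial trivialization and a fresh one by using the critical-degree vanishing ${\lim}^n(\mathbf G\restriction\Lambda_\alpha)=0$ and extending the resulting $(n-1)$-cochain to the new tuples. The only cosmetic differences are that you pass to a cofinal subsystem up front (the paper instead extends the trivialization from the chain to all of $\Lambda$ by hand in the base case) and you extend $\bar v$ by arbitrary values where the paper extends by zero.
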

\begin{proof}
We prove the theorem by induction. 

First, the base case. Let $\mathbf G$ be a flasque $\Lambda$-inverse system for a directed set of countable cofinality $\Lambda$. We need to prove that ${\lim}^1\mathbf G=0$. By directedness of $\Lambda$, we can find an increasing sequence $(\mu_i)_{i\in\omega}$ which is cofinal in $\Lambda$. Let $\bar z\in\mathbf G^{(1)}$ be coherent, i.e., $\delta^2(\bar z)=0$. Define $x_{\mu_0}$ to be any element of $G_{\mu_0}$. If $x_{\mu_n}$ has been defined, by surjectivity of $p_{\mu_n}^{\mu_{n+1}}$, we can find $x_{\mu_{n+1}}\in G_{\mu_{n+1}}$ such that 
\[
p_{\mu_n}^{\mu_{n+1}}(x_{\mu_{n+1}})= x_{\mu_n} - z_{\mu_n\mu_{n+1}}.
\]
An easy calculation by induction on $j-i$ gives that if $i\leq j$ then
\[
z_{\mu_i\mu_j}=x_{\mu_i}-p_{\mu_i}^{\mu_{j}}(x_{\mu_{j}}).
\]
For $\lambda\in\Lambda$, let $i$ be the least such that $\mu_i \geq \lambda$, and set 
\[
x_\lambda=p_{\lambda}^{\mu_i}(x_{\mu_i})+z_{\lambda\mu_i}.
\]
We want to check that $\delta^1(\bar x)=\bar z$. Let $(\lambda_0,\lambda_1)\in \Lambda^{(1)}$. Let $i$ be the least such that $\mu_i \geq \lambda_0$, and $j$ is the least such that $\mu_j \geq \lambda_1$. Then 
\begin{eqnarray*}
z_{\lambda_0\lambda_1}&=&z_{\lambda_0\mu_j}-p_{\lambda_0}^{\lambda_1}(z_{\lambda_1\mu_j})=z_{\lambda_0\mu_i}+p_{\lambda_0}^{\mu_i}(z_{\mu_i\mu_j})-p_{\lambda_0}^{\lambda_1}(x_{\lambda_1}-p_{\lambda_1}^{\mu_j}(x_{\mu_j}))\\
&=&x_{\lambda_0}-p_{\lambda_0}^{\mu_i}(x_{\mu_i})+p_{\lambda_0}^{\mu_i}(x_{\mu_i}-p_{\mu_i}^{\mu_{j}}(x_{\mu_{j}}))-p_{\lambda_0}^{\lambda_1}(x_{\lambda_1})+p_{\lambda_0}^{\mu_j}(x_{\mu_j})\\&=&x_{\lambda_0}-p_{\lambda_0}^{\lambda_1}(x_{\lambda_1}).
\end{eqnarray*}

We are left with the inductive step. Fix $\Lambda$ be a directed set of cofinality $\aleph_n$, and $\mathbf G$ be a flasque $\Lambda$-inverse system of abelian groups. Assume we have proved the theorem for all flasque systems of cofinality $<\aleph_n$. Write $\Lambda=\bigcup_{\alpha<\omega_n}\Lambda_\alpha$, where each $\Lambda_\alpha$ is a directed set with $\cof(\Lambda_\alpha)<\aleph_n$ and, if $\beta$ is a limit, then $\Lambda_\beta=\bigcup_{\alpha<\beta}\Lambda_\alpha$. Fix $\bar z\in \mathbf G^{(n+1)}$ such that $\delta^{n+2}(\bar z)=0$. We want to find $\bar x\in \mathbf G^{(n)}$ such that $\delta^{n+1}(\bar x)=\bar z$. We define inductively $\bar x^\alpha\in \mathbf G \restriction \Lambda_\alpha^{(n)}$ such that 
\[
\text { if }\alpha<\beta\text{ then } \bar x^\beta\restriction \Lambda_\alpha^{(n)}=\bar x^\alpha,
\]
and
\[
\delta^{n+1}(\bar x^\alpha)=\bar z\restriction \Lambda_\alpha^{(n+1)}.
\]
If this can be done, setting $\bar x=\bigcup_\alpha\bar x^\alpha$ would complete the proof of the theorem.

For every $\alpha<\omega_n$, we have $\cof(\Lambda_\alpha)<\aleph_n$, hence by induction there is $\bar y^\alpha\in \mathbf G \restriction \Lambda_\alpha ^{(n)}$ such that $\delta^{n+1}(\bar y^\alpha)=\bar z\restriction \Lambda_\alpha^{(n+1)}$. Set $\bar x^0=\bar y^0$. Suppose $\beta$ is a limit ordinal and $\bar x^\alpha$ has been defined for all $\alpha < \beta$. Since $\Lambda_\beta=\bigcup_{\alpha<\beta}\Lambda_\alpha$, we have that $\Lambda_\beta^{(n)}=\bigcup_{\alpha<\beta} \Lambda_\alpha^{(n)}$, hence we can define $\bar x^\beta=\bigcup_{\alpha<\beta}\bar x^\alpha$. 

We are left with the successor step. Notice that both $\bar x^\alpha$ and $\bar y^{\alpha+1}\restriction \Lambda_\alpha^{(n)}$ trivialize $\bar z\restriction \Lambda_\alpha^{(n+1)}$, hence $\bar y^{\alpha+1}\restriction \Lambda_\alpha^{(n)}-\bar x^\alpha$ is a coherent element of $\mathbf G \restriction \Lambda_\alpha^{(n)}$. By the inductive hypothesis and the fact that $\cof(\Lambda_\alpha)<\aleph_n$, there is $\bar w^\alpha\in \mathbf G\restriction \Lambda_\alpha ^{(n-1)}$ trivialising $\bar y^{\alpha+1}\restriction \Lambda_\alpha^{(n)}-\bar x^\alpha$. We extend $\bar w^\alpha$ to $\bar w^{\alpha+1}\in \mathbf G\restriction \Lambda_{\alpha+1}^{(n-1)}$ by setting:
\[
\bar w^{\alpha+1}(\lambda_0, \ldots, \lambda_{n-1})=
\begin{cases}
\bar w^{\alpha}(\lambda_0,\ldots \lambda_{n-1}) &\text{ if } (\lambda_0, \ldots, \lambda_{n-1}) \in \Lambda_\alpha^{(n-1)}\\
0&\text{ otherwise.}
\end{cases}
\]
Define 
\[
\bar x^{\alpha+1}=\bar y^{\alpha+1}-\delta^n(\bar w^{\alpha+1}).
\]
Then $\bar x^{\alpha+1}\restriction \Lambda_\alpha^{(n)}=\bar x^\alpha$, and 
\[
\delta^{n+1}(\bar x^{\alpha+1})=\delta^{n+1}(\bar y^{\alpha+1}) - (\delta^{n+1}\circ\delta^n )(\bar w^{\alpha+1})=\bar z\restriction \Lambda_{\alpha+1}^{(n+1)}.\qedhere
\]
\end{proof}
The proof of the original Goblot's theorem is similar. One first shows that $\lim^2$ vanishes for systems of countable cofinality and applies the induction as above. 

The following is known as Mitchell's Theorem, and shows that Goblot's bound is optimal. It was proved first in \cite{Mitchell72}, (see also \cite[Theorem 13.11]{Mardesic.Book}), but in all proofs we could find there was not an explicit example of a system as below.

\begin{theorem}[Mitchell, \cite{Mitchell72}]\label{thm:mitchell}
Let $n\geq 0$. Then there exists an inverse system of abelian groups $\mathbf G_n$ such that $\cof(\mathbf G_n)=\aleph_n$ and ${\lim}^{n+1}\mathbf G_n\neq 0$.
\end{theorem}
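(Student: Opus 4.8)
The plan is to prove the statement by induction on $n$, producing at each stage an \emph{explicit} directed set $\Lambda_n$ with $\cof(\Lambda_n)=\aleph_n$ together with a $\Lambda_n$-inverse system $\mathbf G_n$ and, ultimately, a concrete nontrivial coherent element of $\mathbf G_n^{(n+1)}$. For the base case $n=0$ I would take the tower $\Lambda_0=\omega$ with $G_i=\Z$ and $p^j_i$ equal to multiplication by $2^{j-i}$. Here ${\lim}^0\mathbf G_0=0$ since no nonzero integer is infinitely $2$-divisible, while a direct computation identifies ${\lim}^1\mathbf G_0$ with the cokernel of the map $(x_i)_i\mapsto (x_i-2x_{i+1})_i$ on $\prod_i\Z$, which is nonzero (it is the group of $2$-adic integers modulo $\Z$). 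This settles $n=0$ with a fully explicit witness.

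For the inductive step, suppose $\mathbf G_{n-1}$ has been built over $\Lambda_{n-1}$ with $\cof(\Lambda_{n-1})=\aleph_{n-1}$ and ${\lim}^n\mathbf G_{n-1}\neq 0$. I would set $\Lambda_n=\Lambda_{n-1}\times\omega_n$, which is directed and has $\cof(\Lambda_n)=\aleph_n$, and raise the dimension by one via a dimension-shifting short exact sequence of $\Lambda_n$-inverse systems
\[
0\to\mathbf G_n\to\mathbf F_n\to\mathbf Q_n\to 0,
\]
where $\mathbf F_n$ is an explicit flasque system over $\Lambda_n$ (for instance assembled from products over initial segments with the obvious surjections), $\mathbf Q_n$ is a ``spread-out'' copy of $\mathbf G_{n-1}$ along the new $\omega_n$ coordinate that still carries the nonzero class of ${\lim}^n\mathbf G_{n-1}$, and $\mathbf G_n$ is the kernel. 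The sequence induces the long exact sequence of derived limits
\[
\cdots\to{\lim}^n\mathbf F_n\to{\lim}^n\mathbf Q_n\xrightarrow{\ \partial\ }{\lim}^{n+1}\mathbf G_n\to{\lim}^{n+1}\mathbf F_n\to\cdots.
\]
Since $\cof(\Lambda_n)=\aleph_n$ and $\mathbf F_n$ is flasque, the Flasque Goblot Theorem (Theorem~\ref{thm:flasquegoblot}) gives ${\lim}^{n+1}\mathbf F_n=0$, so $\partial$ is surjective and ${\lim}^{n+1}\mathbf G_n$ is the cokernel of the map ${\lim}^n\mathbf F_n\to{\lim}^n\mathbf Q_n$. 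It then suffices to show this map is not surjective, i.e.\ that the inherited class does not lift through the flasque cover.

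The main obstacle is exactly this last point, and it is delicate because we sit at the boundary of Goblot's bound (Theorem~\ref{thm:goblot}): Flasque Goblot annihilates ${\lim}^{n+1}\mathbf F_n$ but \emph{not} ${\lim}^n\mathbf F_n$, so I cannot simply read off an isomorphism and must instead analyze the map ${\lim}^n\mathbf F_n\to{\lim}^n\mathbf Q_n$ and prove its cokernel is nonzero. Concretely this means verifying that the nonvanishing class furnished by the inductive hypothesis genuinely survives the passage from cofinality $\aleph_{n-1}$ to $\aleph_n$ and is obstructed from lifting — this is precisely the mechanism by which the cohomological dimension is forced up by one each time the cofinality rises, the phenomenon located at the cofinal-$\omega_n$ limit points of the new coordinate. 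I would keep the example explicit by taking all coefficient groups free abelian (so that combining the systems introduces no $\mathrm{Tor}$ cancellation that could kill the class) and by writing down $\mathbf F_n$, the spreading maps, and the resulting $(n+1)$-cocycle by hand, thereby producing the sought explicit system with ${\lim}^{n+1}\mathbf G_n\neq 0$ and $\cof(\mathbf G_n)=\aleph_n$.
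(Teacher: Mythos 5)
Your base case is correct, and it is different from the paper's: the paper uses the system $G^\lambda_{\alpha,0}=\bigoplus_{\xi\in\lambda\setminus\alpha}\mathbb{Z}_2$ with inclusion bonding maps over an arbitrary ordinal $\lambda$ of countable cofinality, whereas you use the classical $2$-divisibility tower with $\lim^1\cong\hat{\mathbb{Z}}_2/\mathbb{Z}$. The genuine gap is in the inductive step, where you have set up a framework but left its essential content unproved. From the long exact sequence and Theorem~\ref{thm:flasquegoblot} you correctly reduce to showing that $\lim^n\mathbf F_n\to\lim^n\mathbf Q_n$ is not surjective, and you then declare this to be ``the main obstacle'' and describe what you \emph{would} do to overcome it. But that non-surjectivity \emph{is} Mitchell's theorem: nothing in your setup forces it, and the conclusion depends entirely on which flasque cover and which surjection you choose. (If $\mathbf Q_n$ admitted a flasque cover through which the nonzero class of $\lim^n\mathbf Q_n$ lifts --- for instance, if $\mathbf Q_n$ were itself flasque and you took the identity --- the cokernel would vanish.) You never actually define $\mathbf F_n$, the surjection, or the kernel $\mathbf G_n$, and you supply no mechanism --- no club or pressing-down argument, no support computation --- that obstructs lifting. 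As written, the proposal is a plausible plan whose hardest step is deferred, not a proof.

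For contrast, the paper sidesteps the lifting problem entirely by working inside one explicit system, $G^\lambda_{\alpha,n}=\bigoplus_{(\lambda\setminus\alpha)^{(n)}}\mathbb{Z}_2$ with inclusion maps. It builds the coherent $(n+1)$-cochain by transfinite recursion on $\lambda<\omega_n$: at each stage it uses Goblot's theorem to trivialize what has been built so far, and at each $\lambda$ of cofinality $\aleph_{n-1}$ it twists the canonical extension by a nontrivial coherent $n$-cochain $\bar z^\lambda$ supplied by the inductive hypothesis, placed on supports that carry the new coordinate $\lambda$. Nontriviality is then proved by intersecting a club of closure points of a putative trivialization with the set of $\lambda$ of cofinality $\aleph_{n-1}$ and reading off, via the operation $d_\lambda$, a trivialization of $\bar z^\lambda$ --- a contradiction. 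Note that this recursion consumes a nontrivial class over \emph{every} ordinal $\lambda<\omega_n$ of cofinality $\aleph_{n-1}$, so the inductive hypothesis must be stated uniformly in $\lambda$, not for a single system. If you pursue your route you will need either the analogous uniform statement or an $\mathbf F_n$ that is acyclic in degree $n$ as well as degree $n+1$; Theorem~\ref{thm:flasquegoblot} gives you only the latter degree, so this too would have to be constructed and verified.
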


\begin{proof}
Once again, our proof is by induction. If $n\geq 0$ and $\alpha<\lambda$ is an ordinal, let
\[
G^\lambda_{\alpha,n}= \bigoplus_{(\lambda\setminus\alpha)^{(n)}}\zet_2.
\]
We can think of $G^\lambda_{\alpha, n}$ as the Boolean group of finite subsets of $(\lambda \setminus \alpha)^{(n)}$, where the group operation is the symmetric difference.
For $\alpha < \beta< \lambda$, we let $p_\alpha^\beta \colon G^\lambda_{\beta, n}\to G^\lambda_{\alpha,n}$ be the inclusion map. Henceforth we omit the reference to these maps. We let 
\[
\mathbf G^\lambda_n = \{ G^\lambda_{\alpha, n}; p_\alpha^\beta \colon \alpha < \beta < \lambda \}.
\]
If $\lambda= \omega_n$, we write $G_{\alpha, n}$ for $G^\lambda_{\alpha, n}$ and $\mathbf G_n$ for $\mathbf G^\lambda_n$. Notice that $\cof(\mathbf G_n^\lambda)=\cof(\lambda)$. We plan to show that $\lim^{n+1} \mathbf G^\lambda_n \neq 0$, for every $\lambda$ of cofinality $\aleph_n$. %In particular, $\lim^{n+1}\mathbf G_n \neq 0$, for all $n$. 

Consider first the case $n=0$, and let $\lambda$ be an ordinal of cofinality $\omega$. Fix an increasing sequence $(\alpha_i)_{i\in \omega}$ converging to $\lambda$. Let $\bar x = (x_{\alpha,\beta})\in (\mathbf G^\lambda_0)^{(1)}$ be defined as follows:
\[
x_{\alpha, \beta}(\xi)=
\begin{cases}
1 \text { if } \alpha \leq \xi < \beta \text{ and } \xi = \alpha_i, \text{ for some } i \\
0 \text{ otherwise.}
\end{cases}
\] 
\begin{claim}
The sequence $\bar x=(x_{\alpha, \beta})\in (\mathbf G^\lambda_0)^{(1)}$ is coherent and nontrivial.
\end{claim}
\begin{proof}
Let $(\alpha,\beta,\gamma) \in \lambda^{(2)}$. We want to check that $\delta^2(\bar x)_{\alpha,\beta,\gamma}=\bar0$. Since $x_{\alpha,\alpha}=0$, for all $\alpha$, we may assume $\alpha < \beta < \gamma$. If $i\in\omega$ is such that $\alpha \leq \alpha_i <\beta$ then $x_{\alpha,\beta}(\alpha_i)=x_{\alpha,\gamma}(\alpha_i)=1$ and $x_{\beta,\gamma}(\alpha_i)=0$. Similarly, if $\beta \leq \alpha_i < \gamma$ then $x_{\alpha,\gamma}(\alpha_i)=x_{\beta,\gamma}(\alpha_i)=1$ and $x_{\alpha, \beta}(\alpha_i)=0$. Lastly, if $\xi\neq \alpha_i$, for some $i$ such that $\alpha \leq \alpha_i < \gamma$, then $x_{\alpha,\beta}(\xi)=x_{\alpha,\gamma}(\xi)=x_{\beta,\gamma}(\xi)=0$. Hence $\delta^2(\bar x)_{\alpha,\beta,\gamma}(\xi)=0$, for all $\xi < \lambda$. 

Suppose now that $\bar y=(y_\alpha)\in (\mathbf G^\lambda_0)^{(0)}$ trivializes $\bar x$. Since $y_0\in\bigoplus_{\xi <\lambda} \zet_2$ has finite support, there is $m \in \omega$ such that $y_0(\alpha_n)=0$, for all $n\geq m$. Since $x_{0, \alpha_m +1}= y_0 + y_{\alpha_m+1}$, we have that
\[
1=x_{0,\alpha_m+1}(\alpha_m)=y_0(\alpha_m)+y_{\alpha_m+1}(\alpha_m).
\]
Since $\alpha_m<\alpha_m+1$ and the support of $y_{\alpha_m+1}$ is contained in $[\alpha_m+1,\lambda)$, $y_{\alpha_m+1}(\alpha_m)=0$. Since $y_0(\alpha_m)=0$, we get $1=0$, a contradiction.
\end{proof}

Suppose $n >0$ and we have shown that $\lim^{n} \mathbf G^\lambda_{n-1}\neq 0$, for all $\lambda$ of cofinality $\aleph_{n-1}$. We have to show that $\lim^{n+1} \mathbf G^\lambda_{n} \neq 0$, for every $\lambda$ of cofinality $\aleph_{n}$. We only do it for $\lambda = \omega_{n}$, i.e., for the system $\mathbf G_{n}$, as the general case is only notationally more involved. 

We construct by induction a coherent sequence $\bar x\in \mathbf G^{(n+1)}_{n}$ with the additional property that if $\bar\alpha=(\alpha_0,\ldots,\alpha_{n+1})\in \omega_n^{(n+1)}$ then $ x_{\bar \alpha} \in G^{\alpha_{n+1}+1}_{\alpha_0, n}$. We start by letting $x_{\bar 0}=0$. Suppose we are at stage $\lambda < \omega_n$ and we have built $\bar x \restriction \lambda^{(n+1)}$. We can consider it as a coherent sequence in $(\mathbf G^\lambda_{n})^{(n+1)}$. It suffices to describe $\bar x \restriction (\lambda+1)^{(n+1)}$. In order words we have to define $x_{\bar \alpha}$ for $n+2$-tuples $\bar \alpha = (\alpha_0, \ldots, \alpha_{n+1})$ such that $\alpha_{n+1}= \lambda$, and we have to make sure that the extended sequence $\bar x \restriction (\lambda+1)^{(n+1)}$ is still coherent. Since $\cof(\lambda)\leq\aleph_{n-1}$, by Theorem~\ref{thm:goblot}, there exists $\bar y^\lambda \in (\mathbf G^\lambda_{n-1})^{(n)}$ that trivializes $\bar x \restriction \lambda^{(n+1)}$. 

\medskip

\noindent {\bf Case 1.} Suppose $\cof(\lambda) < \aleph_{n-1}$. For $\bar\alpha\in \lambda^{(n)}$, we set $x_{\bar\alpha,\lambda}=y^\lambda_{\bar\alpha}$. Then the sequence $\bar x \restriction (\lambda+1)^{(n+1)}$ defined in this way is still coherent.

\medskip

\noindent {\bf Case 2.} Suppose $\cof(\lambda)=\aleph_{n-1}$. Fix a coherent nontrivial sequence $\bar z^\lambda \in (\mathbf G^\lambda_{n-1})^{(n)}$. Such a sequence exists by the inductive assumption. We define a new coherent sequence $\bar w^\lambda$ as follows. For $\bar\alpha= (\alpha_0, \ldots, \alpha_n)\in \lambda^{(n)}$, let
\[
w^\lambda_{\bar\alpha}(\lambda_0,\ldots, \lambda_n)=
\begin{cases}
0 & \text { if } \lambda_n\neq\lambda\\
z^\lambda_{\bar\alpha}(\lambda_0,\ldots,\lambda_{n-1}) & \text{ if } \lambda_n=\lambda.
\end{cases}
\]
Note $w^\lambda_{\bar \alpha} \in G^{\lambda+1}_{\alpha_0,n}$ and that $\bar w^\lambda_{\bar \alpha}$ is essentially the same as $\bar z^\lambda_{\bar \alpha}$, but we add a new coordinate $\lambda$ to each $n$-tuple that occurs in the support of $z^\lambda_{\bar \alpha}$. Finally, for all $\bar \alpha \in \lambda^{(n)}$ we let:
\[
x_{\bar\alpha,\lambda}=y^\lambda_{\bar\alpha}+w^\lambda_{\bar\alpha}.
\]
Let us explain what we have done. First, we can always extend $\bar x \restriction \lambda^{(n+1)}$ to a coherent sequence $\bar x \restriction (\lambda+1)^{(n+1)}$ by defining $x_{\bar \alpha,\lambda}$ to be $y^\lambda_{\bar \alpha}$, for all $\bar \alpha \in \lambda^{(n)}$, but we are twisting the sequence $\bar y^\lambda$ by adding to it the coherent nontrivial sequence $\bar w^\lambda$. The important point is that $w^\lambda_{\bar \alpha}$ lives on the coordinates that do not appear in the support of $y^\lambda_{\bar \alpha}$. It is straightforward to check that the sequence $\bar x \restriction (\lambda +1)^{(n+1)}$ defined in this way is still coherent. 

\begin{claim}
The sequence $\bar x \in \mathbf G_n^{(n+1)}$ is nontrivial.
\end{claim}
\begin{proof} 
Suppose that $\bar u\in \mathbf G_n^{(n)}$ is such that $\delta^{n+1}(\bar u)=\bar x$. Note that, for every $\bar\alpha\in \omega_n^{(n)}$, the support of $u_{\bar \alpha}$ is a finite subset of $(\omega_n \setminus \alpha_0)^{(n)}$. 
Therefore, the set
 \[
C= \{\lambda < \omega_n \colon \supp(u_{\bar\alpha})\subseteq \lambda^{(n)}\text{ for all }\bar\alpha\in \lambda^{(n)}\}
 \]
 is a club. Hence we can find $\lambda \in C$ with $\cof(\lambda)=\aleph_{n-1}$. We want to show that the coherent sequence $\bar z^\lambda$ used in the construction of $\bar x \restriction (\lambda+1)^{(n+1)}$ is trivial, which is a contradiction. Let us first define an operation $d_\lambda$. Given a sequence $\bar v \in (\mathbf G_n^{\lambda+1})^{(n+1)}$ we let $d_\lambda(\bar v) \in (\mathbf G_{n-1}^{\lambda})^{(n)}$ be defined by letting, for $\bar \alpha \in \lambda^{(n-1)}$ and $(\lambda_0, \ldots, \lambda_{n-1}) \in (\lambda \setminus \alpha_0)^{(n-1)}$,
 \[
 d_\lambda(\bar v)_{\bar \alpha}(\lambda_0,\ldots, \lambda_{n-1}) = v_{\bar \alpha, \lambda}(\lambda_0, \ldots, \lambda_{n-1}, \lambda). 
 \]
 \noindent Note that $d_\lambda(\bar x \restriction (\lambda +1)^{(n+1)}) = \bar z^\lambda$. Now, recall that we are working with Boolean groups. So for $\bar v \in (\mathbf G_n^{\lambda +1})^{(n)}$ and $\bar \alpha \in \lambda^{(n)}$ we have: 
\[
\delta^{n+1}(\bar v)_{\bar \alpha, \lambda} = \sum_{0 \leq i \leq n} v_{\bar \alpha^i, \lambda} + v_{\bar \alpha}.
\]
Therefore, for all $(\lambda_0, \ldots, \lambda_{n-1}) \in (\lambda \setminus \alpha_0)^{(n-1)}$ we have: 
% $\bar \alpha \in \lambda^{(n)}$, and $(\lambda_0, \ldots, \lambda_{n-1}) \in (\lambda \setminus \alpha_0)^{(n-1)}$, we have: 
 \[
 d_\lambda(\delta^{n+1}(\bar v))_{\bar \alpha}(\lambda_0,\ldots, \lambda_{n-1})= 
 \delta^{n}(d_\lambda(\bar v))_{\bar \alpha}(\lambda_0,\ldots, \lambda_{n-1}) + v_{\bar \alpha}(\lambda_0,\ldots, \lambda_{n-1}, \lambda).
 \] 

%\medskip
\noindent Consider the sequence $\bar u$. For $\bar \alpha \in \lambda^{(n)}$, we have $\supp(u_{\bar \alpha}) \subseteq \lambda^{(n)}$, hence $u_{\bar \alpha}(\lambda_0,\ldots,\lambda_{n-1},\lambda)=0$, for all $(\lambda_0,\ldots, \lambda_{n-1})\in (\lambda \setminus \alpha_0)^{(n-1)}$. We know that $\delta^{n+1}(\bar u)= \bar x$, and we also know that $d_\lambda(\bar x \restriction (\lambda +1)^{(n+1)}) = \bar z^\lambda$. Putting this together we conclude that: 
\[
\delta^{n}(d_\lambda (\bar u \restriction (\lambda +1)^{(n)}))= \bar z^\lambda.
\]
%\medskip
 \noindent This contradicts the fact that $\bar z^\lambda$ is nontrivial and completes the proof. 
 \end{proof}
 Since $\bar x$ is a coherent nontrivial element of $\mathbf G_n^{(n+1)}$, the proof is complete.
\end{proof}

Let $\Lambda$ and $M$ be directed set and let $\phi\colon M \to \Lambda$ be an increasing function. Then, to every $\Lambda$-inverse system of abelian groups $\mathbf G = \{ G_\lambda; p^{\lambda'}_\lambda ; \lambda \leq \lambda ' \in \Lambda\}$ we can associate a $M$-inverse system $\phi^*(\mathbf G)= \mathbf H$. By definition, we let $\mathbf H = \{ H_\mu ; q^{\mu '}_\mu \colon \mu \leq \mu' \in M\}$, where $H_\mu = G_{\phi (\mu)}$, and $q^{\mu '}_\mu = p^{\phi(\mu')}_{\phi(\mu)}$. 

A function $\phi \colon M \to \Lambda$ is called {\em cofinal} if $\phi (M)$ is cofinal in $\Lambda$, i.e., for every $\lambda \in \Lambda$ there is $\mu \in M$ such that $\lambda \leq \phi (\mu)$. The following {\em cofinality theorem} was proved by B. Mitchell \cite{Mitchell73}, see also \cite[Theorem 14.9]{Mardesic.Book} for a proof.

\begin{theorem}[Mitchell, \cite{Mitchell73}]\label{thm:cofinal}
Let $\mathbf G$ be an inverse system indexed by a directed set $\Lambda$. Suppose $M$ is another directed set and $\phi\colon M \to \Lambda$ is an increasing cofinal map. Then, for every $n \geq 0$,
\[
\pushQED{\qed} 
{\lim}^n \mathbf G \cong {\lim}^n \phi^*(\mathbf G).\qedhere
\popQED
\]
\end{theorem}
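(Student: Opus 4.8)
The plan is to realize the isomorphism as the map induced on cohomology by the evident pullback of cochains, and then to prove that this pullback is a quasi-isomorphism by a simplicial homotopy argument. Since $\phi$ is increasing, it carries $M^{(n)}$ into $\Lambda^{(n)}$ coordinatewise, so one defines a homomorphism $\phi^\#\colon \mathbf G^{(n)}\to \phi^*(\mathbf G)^{(n)}$ by $(\phi^\#\bar x)_{\bar\mu}=x_{\phi(\mu_0),\ldots,\phi(\mu_n)}$; the value lies in $G_{\phi(\mu_0)}$, which is exactly the $\mu_0$-group of $\mathbf H=\phi^*(\mathbf G)$. A direct check using $q^{\mu'}_{\mu}=p^{\phi(\mu')}_{\phi(\mu)}$ shows $\phi^\#\circ\delta^n=\delta^n\circ\phi^\#$, so $\phi^\#$ is a cochain map inducing $(\phi^\#)_*\colon \lim^n\mathbf G\to\lim^n\mathbf H$, and functoriality $(\psi\circ\phi)^\#=\phi^\#\circ\psi^\#$ is immediate. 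It remains to prove $(\phi^\#)_*$ is an isomorphism.

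The engine is a homotopy lemma: if $g_0,g_1\colon Q\to P$ are increasing maps of directed sets with $g_0(\nu)\le g_1(\nu)$ for all $\nu$, and $\mathbf K$ is a $P$-system, then the two cochain maps $\mathbf K^{(\bullet)}\to (g_0^*\mathbf K)^{(\bullet)}$ given by $\bar x\mapsto (x_{g_0(\bar\nu)})$ and $\bar x\mapsto (p^{g_1(\nu_0)}_{g_0(\nu_0)}x_{g_1(\bar\nu)})$ are chain homotopic. The homotopy is the simplicial prism operator
\[
s(\bar x)_{\nu_0,\ldots,\nu_{n-1}}=\sum_{j=0}^{n-1}(-1)^j\, x_{g_0(\nu_0),\ldots,g_0(\nu_j),g_1(\nu_j),\ldots,g_1(\nu_{n-1})},
\]
whose value lands in $G_{g_0(\nu_0)}$ because every tuple on the right begins with $g_0(\nu_0)$; the identity $\delta s+s\delta=g_1^\#-g_0^\#$ is the standard (if lengthy) telescoping computation for a prism decomposition. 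In particular, two increasing maps into a common target that are comparable in this way induce equal maps on every $\lim^n$.

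Next I reduce a general cofinal $\phi$ to a situation with a visible section. Put $N=\{(\mu,\lambda)\in M\times\Lambda\colon \lambda\le\phi(\mu)\}$ with the product order; cofinality of $\phi$ together with directedness of $M$ and $\Lambda$ makes $N$ directed, and both projections $\pi_M\colon N\to M$ and $\pi_\Lambda\colon N\to\Lambda$ are increasing and cofinal. The map $\sigma\colon M\to N$, $\sigma(\mu)=(\mu,\phi(\mu))$, is an increasing section of $\pi_M$, and $\phi=\pi_\Lambda\circ\sigma$. Applying the homotopy lemma to the comparable self-maps $\mathrm{id}_N\le \sigma\circ\pi_M$ of $N$ shows $\pi_M^\#\circ\sigma^\#\simeq \mathrm{id}$, while $\sigma^\#\circ\pi_M^\#=\mathrm{id}$ on the nose; hence $\pi_M^\#$ is a homotopy equivalence and $(\pi_M^\#)_*$, $(\sigma^\#)_*$ are isomorphisms. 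Since $\phi=\pi_\Lambda\circ\sigma$ gives $(\phi^\#)_*=(\sigma^\#)_*(\pi_\Lambda^\#)_*$, it suffices to prove the theorem for the single cofinal map $\pi_\Lambda$.

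The main obstacle is precisely this last point: unlike $\pi_M$, a genuinely cofinal map such as $\pi_\Lambda$ need not admit an increasing section, so the clean ``section $+$ prism'' argument does not apply directly. To finish I would show $(\pi_\Lambda^\#)_*$ is an isomorphism by an extension argument along the fibers of $\pi_\Lambda$, which are directed and cofinal in $M$: given a coherent cochain downstairs one lifts it fiberwise and uses the prism operator to patch the lifts into a coherent cochain upstairs, exactly in the spirit of the inductive extension in the proof of Theorem~\ref{thm:flasquegoblot}, and dually one checks that the pullback of a trivial cochain is already trivial. The step most likely to require care is verifying the prism identity $\delta s+s\delta=g_1^\#-g_0^\#$ and controlling the bookkeeping of the fiberwise lifts so that the patched cochain is genuinely coherent; alternatively one can bypass the combinatorics by invoking that pullback along a cofinal map is exact and preserves injective objects of the functor category, so that $\lim^n$ is computed identically on both sides.
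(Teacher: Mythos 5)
First, a point of comparison: the paper does not actually prove Theorem~\ref{thm:cofinal}. It quotes the result from Mitchell and from Marde\v{s}i\'c's book, and the only statement in this direction proved in the text is Proposition~\ref{prop:cofinallinear}, the much easier extension of a coherent sequence from a \emph{linearly ordered} cofinal subset, which is all the later arguments need. So your attempt has to be judged against the literature rather than against an argument in the paper.

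Your preparation is correct and is essentially the standard one: the pullback cochain map $\phi^\#$, the prism homotopy showing that two increasing maps $g_0 \le g_1$ induce the same map on each $\lim^n$, and the factorization $\phi = \pi_\Lambda \circ \sigma$ through the graph poset $N$, which together with $\pi_M \circ \sigma = \mathrm{id}_M$ and $\mathrm{id}_N \le \sigma \circ \pi_M$ correctly reduces the theorem to the single projection $\pi_\Lambda \colon N \to \Lambda$. But this reduction buys no generality: $\pi_\Lambda$ is still an arbitrary increasing cofinal map with no increasing section, so proving that $(\pi_\Lambda^\#)_*$ is an isomorphism \emph{is} Mitchell's theorem, and that is precisely the step you leave open. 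Neither of your two proposed finishes closes the gap. The ``lift fiberwise and patch'' sketch runs into the very obstruction you identified: to assemble the fiberwise data into a cochain over $\Lambda$ one must choose, coherently in $\lambda$, elements of the fibers $\{\mu \colon \phi(\mu) \ge \lambda\}$, which is again the missing increasing section; the proofs in the literature replace this by a double-complex argument over interleaved chains in $M$ and $\Lambda$ (or delicate transfinite bookkeeping), none of which appears here. The homological-algebra ``alternative'' is not a valid shortcut as stated either: $\phi^*$ is exact, but it does not preserve injectives. The injective cogenerators (the systems equal to an injective group $I$ on the down-set $\{\lambda \colon \lambda \le \mu\}$ and $0$ elsewhere) pull back to systems supported on the down-sets $\phi^{-1}(\{\lambda \colon \lambda \le \mu\})$, which need not be directed, and showing that these pullbacks are nevertheless $\lim$-acyclic is once more essentially the content of the theorem. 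In short, the proposal records the right strategy and an honest diagnosis of where the difficulty lies, but the theorem is not proved. If all you need is what the paper needs, note that Proposition~\ref{prop:cofinallinear} (linear cofinal subsets) is provable by the elementary retraction argument given there, and that is the instance actually used.
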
 

In particular, this implies that if $\mathbf G$ is an inverse system of abelian groups indexed by a directed set $\Lambda$, and $C$ is a cofinal subset of $\Lambda$, then $\lim^n \mathbf G \cong \lim^n \mathbf G \restriction C$, for all $n\geq 0$. Moreover, note that if $\Lambda$ is a directed set of cardinality and cofinality $\aleph_n$ then there is an increasing cofinal map $\phi\colon \Lambda \to \omega_n$. Therefore, from Theorem \ref{thm:mitchell} and Theorem \ref{thm:cofinal}, we get the following corollary. 

\begin{corollary}\label{general-cofinality-thm} Let $n\geq 0$ and let $\Lambda$ be a directed set of cofinality $\aleph_n$. Then there is an inverse system of abelian groups $\mathbf G$ indexed by $\Lambda$ such that $\lim^{n+1} \mathbf G \neq 0$. \qed
\end{corollary}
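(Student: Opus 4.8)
The plan is to combine the two main results already established in this section, Theorem~\ref{thm:mitchell} and Theorem~\ref{thm:cofinal}, using a reindexing argument. By Theorem~\ref{thm:mitchell}, there is an inverse system $\mathbf G_n$ indexed by $\omega_n$ with $\cof(\mathbf G_n) = \aleph_n$ and $\lim^{n+1}\mathbf G_n \neq 0$. The goal is to transport the nonvanishing of the $(n+1)$-st derived limit from the canonical index set $\omega_n$ to the arbitrary directed set $\Lambda$ of cofinality $\aleph_n$, and Theorem~\ref{thm:cofinal} is exactly the tool that guarantees derived limits are preserved under increasing cofinal maps.

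First I would observe that since $\Lambda$ has cofinality $\aleph_n$, I may pass to a cofinal subset $C \subseteq \Lambda$ of size $\aleph_n$; the remark following Theorem~\ref{thm:cofinal} then reduces the problem to finding a suitable system on $C$, since $\lim^n$ is unchanged under restriction to a cofinal subset. Working with $C$, or equivalently assuming $\Lambda$ itself has cardinality and cofinality $\aleph_n$, the key step is to produce an increasing cofinal map $\phi\colon \Lambda \to \omega_n$. This is precisely the point flagged in the paragraph preceding the corollary statement: a directed set of cardinality and cofinality $\aleph_n$ admits such a map. I would then set $\mathbf G = \phi^*(\mathbf G_n)$, the pullback system on $\Lambda$ obtained by declaring $G_\lambda = (G_n)_{\phi(\lambda)}$ with the induced transition maps. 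Theorem~\ref{thm:cofinal} applied to $\phi$ yields $\lim^{n+1}\mathbf G \cong \lim^{n+1}\phi^*(\mathbf G_n) \cong \lim^{n+1}\mathbf G_n \neq 0$, which is the desired conclusion.

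The one genuine point requiring care is the construction of the increasing cofinal map $\phi\colon \Lambda \to \omega_n$ when $\Lambda$ has cardinality and cofinality $\aleph_n$. One would enumerate a cofinal subset of $\Lambda$ in order type $\omega_n$ and, using directedness, build $\phi$ recursively so that it is monotone and its image is cofinal; the directedness of $\Lambda$ is what lets one always find upper bounds to keep the construction increasing while remaining cofinal. This is a standard transfinite recursion of length $\omega_n$, so I would not expect it to be the main obstacle, but it is the only step with genuine set-theoretic content rather than a direct invocation of prior results.

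In summary, the entire argument is a two-line deduction once the reindexing map is in hand: reduce to a cofinal subset of size $\aleph_n$, build an increasing cofinal $\phi\colon \Lambda \to \omega_n$, pull back the Mitchell system $\mathbf G_n$ along $\phi$, and apply the cofinality theorem to conclude $\lim^{n+1}\mathbf G \neq 0$. Since the corollary statement itself already sketches this route in the surrounding text, I anticipate the proof to be short, with the recursive construction of $\phi$ being the only step meriting explicit (though routine) justification.
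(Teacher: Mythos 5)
Your argument is correct and is exactly the paper's: the corollary is stated with no separate proof precisely because it follows from Theorem~\ref{thm:mitchell} and Theorem~\ref{thm:cofinal} once one has an increasing cofinal map $\phi\colon\Lambda\to\omega_n$, and pulling back $\mathbf G_n$ along $\phi$ is the intended construction. The only substantive step, as you note, is building $\phi$ (e.g.\ fix a cofinal enumeration $\{\lambda_\alpha:\alpha<\omega_n\}$ and let $\phi(\lambda)$ be the least $\alpha$ with $\lambda\leq\lambda_\alpha$, which is monotone and has cofinal image since no subset of size $<\aleph_n$ is cofinal); this also shows the detour through the cofinal subset $C$ is unnecessary, since this $\phi$ is defined on all of $\Lambda$ and so $\phi^*(\mathbf G_n)$ is already indexed by $\Lambda$.
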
 

We only prove an easy instance of Theorem \ref{thm:cofinal} which is sufficient for our purposes. 

\begin{proposition}\label{prop:cofinallinear}
Let $\mathbf G$ be an inverse system of abelian groups indexed by a directed set $\Lambda$. Suppose that $C\subseteq\Lambda$ is a linear cofinal subset, and that $\bar x\in\mathbf G\restriction C^{(n)}$ is coherent. Then there is a coherent $\bar z\in\mathbf G^{(n)}$ such that $\bar z\restriction C^{(n)}=\bar x$.
\end{proposition}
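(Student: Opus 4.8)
The plan is to build the extension $\bar z$ by hand, working downward from the largest coordinate and exploiting that $C$, being linear and cofinal, supplies upper bounds \emph{inside} $C$ for every finite subset of $\Lambda$. Since the only maps available point downward, $p^\mu_\lambda\colon G_\mu\to G_\lambda$ for $\lambda\le\mu$, the guiding principle is that a value $z_{\bar\lambda}\in G_{\lambda_0}$ should be read off from the values of $\bar x$ sitting at tuples whose coordinates have been pushed up into $C$ and then transported back down. The basic resource is that, for each $\lambda\in\Lambda$, the set $C_{\ge\lambda}=\{c\in C:\lambda\le c\}$ is a nonempty, linearly ordered, cofinal up-set, which is what will let me telescope.

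The first and main step is a reduction formula. Given $\bar\lambda=(\lambda_0,\dots,\lambda_n)$, choose any $c\in C$ with $c\ge\lambda_n$, which exists by cofinality and directedness. If $\bar z$ is to be coherent, then $\delta^{n+1}(\bar z)=0$ evaluated at $(\lambda_0,\dots,\lambda_n,c)$ forces
\[
z_{\bar\lambda}=(-1)^n\Big(\,p^{\lambda_1}_{\lambda_0}\big(z_{(\lambda_1,\dots,\lambda_n,c)}\big)+\sum_{i=1}^{n}(-1)^i z_{(\lambda_0,\dots,\lambda_{i-1},\lambda_{i+1},\dots,\lambda_n,c)}\Big),
\]
and every tuple on the right-hand side ends in the element $c\in C$. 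I will turn this around and take it as the \emph{definition} of $z_{\bar\lambda}$, once the values of $z$ on tuples whose top coordinate already lies in $C$ have been fixed. Two things must then be verified: that the right-hand side does not depend on the choice of $c$, and that the resulting $\bar z$ is coherent. Independence of $c$ is precisely where the coherence of $\bar x$ enters: passing from $c$ to a larger $c'\in C$ changes each summand by a $p(\cdot)$-image of an $\bar x$-increment, and these increments cancel in pairs via the cocycle identity $\delta^{n+1}(\bar x)=0$. This is already the mechanism in the case $n=1$, where the formula reads $z_{\lambda_0,\lambda_1}=z_{\lambda_0,c}-p^{\lambda_1}_{\lambda_0}(z_{\lambda_1,c})$ and is manifestly $c$-independent because $p^{\lambda_1}_{\lambda_0}p^{c}_{\lambda_1}=p^{c}_{\lambda_0}$.

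It remains to define $z$ on tuples whose largest coordinate already lies in $C$, grounding the recursion at fully $C$-valued tuples, where we simply set $z_{\bar c}=x_{\bar c}$. I will carry this out by an inner induction that lowers, one at a time, the coordinates lying outside $C$, again invoking the reduction formula after appending a suitable $c\in C$. The crucial use of linearity is in the top $C$-coordinate: for a tuple $(\lambda_0,\dots,\lambda_{n-1},c)$ the admissible $c$ range over the \emph{linear} order $C_{\ge\lambda_{n-1}}$, so the required values can be produced by telescoping increments dictated by coherence, exactly as the values $z_{\lambda,c}$ are produced when $n=1$. That the telescoped family is well defined uses once more that the increments satisfy the cocycle condition, a direct consequence of $\delta^{n+1}(\bar x)=0$.

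The main obstacle is bookkeeping: making the nested construction unambiguous and checking simultaneously that the final $\bar z$ satisfies $\delta^{n+1}(\bar z)=0$ at \emph{every} tuple and that $\bar z\restriction C^{(n)}=\bar x$. Both verifications are local, in that each identity involves only finitely many coordinates and hence lives over a single $c\in C$ dominating them; so each reduces, after appending a large enough $c\in C$, to the coherence of $\bar x$ together with the functoriality $p^\nu_\lambda=p^\mu_\lambda\circ p^\nu_\mu$. The restriction claim then follows because on all-$C$ tuples the reduction formula collapses to the coherence relation for $\bar x$ itself. I expect no conceptual difficulty beyond organizing these cancellations, and the case $n=1$ already exhibits every phenomenon in miniature, so it can be written out in full as the base step.
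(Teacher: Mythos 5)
Your guiding idea --- transport the values of $\bar x$ from $C$ down to arbitrary tuples using the coherence identity --- is the right one, and your $n=1$ computation is correct. But for general $n$ the construction you describe has a genuine gap: the recursion is not well-founded as stated. Your reduction formula rewrites $z_{\bar\lambda}$ in terms of the values $z_{(\lambda_0,\ldots,\widehat{\lambda_i},\ldots,\lambda_n,c)}$, and such a term is ``simpler'' than $\bar\lambda$ only when the deleted coordinate $\lambda_i$ lies outside $C$; when $\lambda_i\in C$ the new tuple has exactly the same multiset of non-$C$ coordinates and a fresh element of $C$ appended, so no complexity measure decreases. The same problem recurs inside your ``inner induction'': for a tuple $(\lambda_0,\ldots,\lambda_{n-1},c)$ the identity obtained by appending $c'\geq c$ relates it to $(\lambda_0,\ldots,\lambda_{n-1},c')$, i.e., it moves sideways (upward in $C$) rather than downward toward the base case. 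For $n=1$ this is harmless because the sideways terms form a genuine one-variable telescoping family $\{z_{\lambda,c}\colon c\in C,\ \lambda\leq c\}$ that you can seed by one free choice per $\lambda$; for $n\geq 2$ the increments themselves involve tuples with several coordinates outside $C$, so the seeding choices at different levels interact, and you have not specified how to make them compatibly --- yet that compatibility is exactly what the final coherence check requires. ``Organizing the cancellations'' is therefore not bookkeeping; it is the missing content.

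The paper resolves this with one idea that closes the gap at a stroke: since $C$ is linear, it may be refined to a well-ordered cofinal set, so $s(\lambda)=\min\{c\in C\colon \lambda\leq c\}$ is a well-defined, monotone retraction of $\Lambda$ onto $C$. One then defines in closed form $z_{\lambda_0,\ldots,\lambda_n}=p^{s(\lambda_0)}_{\lambda_0}\bigl(x_{s(\lambda_0),\ldots,s(\lambda_n)}\bigr)$; coherence of $\bar z$ follows by applying $p^{s(\lambda_0)}_{\lambda_0}$ to the identity $\delta^{n+1}(\bar x)_{s(\lambda_0),\ldots,s(\lambda_{n+1})}=0$ together with $p^{\lambda_1}_{\lambda_0}\circ p^{s(\lambda_1)}_{\lambda_1}=p^{s(\lambda_0)}_{\lambda_0}\circ p^{s(\lambda_1)}_{s(\lambda_0)}$, and the restriction property is immediate because $s$ is the identity on $C$. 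This monotone section is the uniform, simultaneous choice your telescoping was groping for; if you unwind your recursion with these canonical seeds you recover exactly this formula.
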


\begin{proof}
Fix $\Lambda$, $\mathbf G$, $C$, and $n$ as in the hypothesis. Note that, by replacing $C$ by a cofinal subset if necessary, we may assume that $C$ is well-ordered. For $\lambda \in \Lambda$, let $s(\lambda)$ be the least $\mu \in C$ such that $\lambda \leq \mu$. Note that $s$ is increasing and equals the identity on $C$. Suppose $\bar x \in \mathbf G\restriction C^{(n)}$ is coherent. We define its extension $\bar z \in \mathbf G ^{(n)}$. Given $(\lambda_0, \ldots, \lambda_n) \in \Lambda^{(n)}$, we let: 
 \[ 
 z_{\lambda_0, \ldots, \lambda_n} = p^{s(\lambda_0)}_{\lambda_0}(x_{s(\lambda_0), \ldots, s(\lambda_n)}). 
 \]
 Note that $\bar z$ is also coherent. Since $s$ is the identity on $C$, we get that $\bar z\restriction C^{(n)}= \bar x$, as required. 
 \end{proof}

\section{The inverse system $\mathbf A$}\label{S.A}

In this section we consider inverse systems of a special form. We fix an infinite set $X$ and for each $x\in X$, a nontrivial abelian group $G_x$. We also fix $\mathcal I$, a nonprincipal ideal on $X$ containing the Fr\'echet ideal, i.e., all finite subsets of $X$. For $a \in \mathcal I$ we consider the group: 
\[
G_a = \bigoplus_{x \in a} G_x.
\]
If $a\subseteq b$ we let $p^b_a$ be the natural projection from $G_b$ to $G_a$. Let 
\[
\mathbf G_{\mathcal I} = \{ G_a, p^b_a\colon a,b \in \mathcal I, a\subseteq b \}.
\]
Note that the map $p^b_a$ is surjective, whenever $a\subseteq b$, and hence $\lim^0 \mathbf G_{\mathcal I}\neq 0$. We are interested in computing $\lim^n \mathbf G_{\mathcal I}$ for $n\geq 1$. We focus on the case $G_x = \mathbb Z$ or $G_x = \mathbb Z_2$, for all $x\in X$. The special case where $X=\omega \times \omega$, the ideal $\mathcal I_0$ is generated by sets 
\[
a_f= \{ (i,j) \in \omega \times \omega \colon j \leq f(i) \},
\]

%\medskip

\noindent for functions $f\in \omega^\omega$, and $G_{(i,j)}=\mathbb Z$, for all $(i,j) \in \omega^2$, corresponds to the inverse system $\mathbf A$ studied in \cite{MardesicPrasolov}, \cite{Dow-Simon-Vaughan}, \cite{Bergfalk2017} \cite{BergfalkLH} and \cite{Bergfalk2021simutaneously}, and is the main object of our study, but we find it convenient to consider a slightly more general situation. Our goal is to isolate a combination of combinatorial principles which together imply that $\lim^n \mathbf A \neq 0$. 

In order to understand the systems $\mathbf G_{\mathcal I}$ as above it is convenient to define an associated inverse system $\mathbf P_{\mathcal I}$. For $a\in \mathcal I$ we let: 
\[
P_a = \prod_{x \in a} G_x.
\]

For $a \subseteq b$ we denote also by $p^b_a$ the natural projection map from $P_b$ to $P_a$. Note that $G_a$ is a (normal) subgroup of $P_a$ and hence we can construct the quotient inverse system $\mathbf P/\mathbf G_{\mathcal I}$, by letting, for all $a\in \mathcal I$,
\[
(P/G)_a = P_a/G_a.
\]
We then have a short exact sequence of $\mathcal I$-inverse systems of abelian groups: 
\[
\mathbf 0\to\mathbf G_{\mathcal I}\to\mathbf P_{\mathcal I}\to\mathbf P/\mathbf G_{\mathcal I} \to \mathbf 0.
\]

%\medskip

Since $\lim$ is a left exact functor, by a standard construction in homological algebra (see e.g., \cite[Chapter IV]{Hilton-Stammbach}), this gives rise to a long exact sequence:

\begin{equation}\label{eq:long-exact}
\cdots\to{\lim}^n\mathbf G_{\mathcal I}\to{\lim}^n\mathbf P_{\mathcal I}\to{\lim}^n \mathbf P/\mathbf G_{\mathcal I}
\to {\lim}^{n+1}\mathbf G_{\mathcal I}\to\cdots.
\end{equation}
%\medskip
\begin{lemma}
If $n \geq 1$, then $\lim^n \mathbf P_{\mathcal I}=0$.
\end{lemma}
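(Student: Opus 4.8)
The plan is to prove the stronger statement that, for every $n\geq 1$, every coherent cochain in $\mathbf P_{\mathcal I}^{(n)}$ is already trivial. Fix a coherent $\bar x\in\mathbf P_{\mathcal I}^{(n)}$, so $\delta^{n+1}(\bar x)=0$. The guiding observation is that, because every transition map $p^b_a$ is simply the coordinate projection $P_b\to P_a$, the complex splits along the coordinates $y\in X$. Concretely, if $\bar a=(a_0,\dots,a_n)\in\mathcal I^{(n)}$ and $y\in a_0$, then $y\in a_i$ for all $i$, and the projection $p^{a_1}_{a_0}$ occurring in $\delta$ is the identity on the $y$-th coordinate. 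Thus, for each fixed $y$, the assignment $\bar a\mapsto x_{\bar a}(y)\in G_y$, ranging over tuples whose least set contains $y$, is a cochain for the \emph{constant} inverse system with value $G_y$ and identity transition maps, indexed by the directed set $\mathcal I_y=\{a\in\mathcal I:y\in a\}$.

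The structural key is that $\mathcal I_y$ has a least element, namely the singleton $\{y\}$, which lies in $\mathcal I$ precisely because $\mathcal I$ contains the Fr\'echet ideal. For a constant system over a directed set with least element $m$, the operator ``prepend $m$'' is a contracting homotopy: setting $s(f)(a_0,\dots,a_{k-1})=f(m,a_0,\dots,a_{k-1})$ one verifies the standard simplicial identity $\delta s+s\delta=\mathrm{id}$ in positive degrees, so every coherent cochain is a coboundary. This dictates the definition of the trivialization $\bar z\in\mathbf P_{\mathcal I}^{(n-1)}$, given coordinatewise by
\[
z_{\bar b}(y)=x_{(\{y\},\,b_0,\dots,b_{n-1})}\in G_y,\qquad y\in b_0,
\]
for $\bar b=(b_0,\dots,b_{n-1})\in\mathcal I^{(n-1)}$; the tuple $(\{y\},\bar b)$ is legitimate since $\{y\}\subseteq b_0$.

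The one place where working with products rather than direct sums is essential is well-definedness: $z_{\bar b}$ is an arbitrary element of $P_{b_0}=\prod_{y\in b_0}G_y$, with no support restriction, so the coordinatewise prescription genuinely yields an element of $\mathbf P_{\mathcal I}^{(n-1)}$. (The same recipe fails for $\mathbf G_{\mathcal I}$, where finite support is forced, which is exactly why $\lim^n\mathbf G_{\mathcal I}$ may be nonzero.) It remains to check $\delta^n(\bar z)=\bar x$: fixing $\bar a$ and $y\in a_0$, expanding $\delta^n(\bar z)_{\bar a}(y)$ and using once more that the projection is the identity on the $y$-coordinate produces the alternating sum $\sum_{j=0}^{n}(-1)^j x_{(\{y\},\bar a^{\,j})}$, while reading off the $y$-coordinate of the coherence relation $\delta^{n+1}(\bar x)=0$ at the tuple $(\{y\},a_0,\dots,a_n)$ yields exactly $x_{\bar a}(y)=\sum_{j=0}^{n}(-1)^j x_{(\{y\},\bar a^{\,j})}$; comparing the two gives $\delta^n(\bar z)_{\bar a}=x_{\bar a}$. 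I expect the only delicate point to be the index bookkeeping in this final comparison — aligning the prepend-$\{y\}$ homotopy with the coboundary of the global system — rather than any real difficulty, since the essential content is just that $\mathcal I_y$ has a minimum and that products impose no support constraint.
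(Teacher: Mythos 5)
Your proposal is correct and is essentially the paper's own argument: the paper likewise defines the trivialization by $y_{\bar a}(k)=x_{\{k\},a_0,\ldots,a_{n-1}}(k)$ (prepending the singleton, which is in $\mathcal I$ since $\mathcal I$ contains the Fr\'echet ideal) and verifies $\delta^n(\bar y)=\bar x$ from the coherence relation at the tuple $(\{k\},\bar a)$, exactly as in your final comparison. Your "contracting homotopy for a constant system with a least element" framing and the remark about why products (no support constraint) are essential are just conceptual packaging of the same computation.
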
 
\begin{proof}
 Let $\bar x \in \mathbf P_{\mathcal I}^{(n)}$ be a coherent sequence, i.e., such that $\delta^{n+1}(\bar x)=0$. We have to find $\bar y \in \mathbf P_{\mathcal I}^{(n-1)}$ such that $\delta^n(\bar y) = \bar x$. Suppose $\bar a = (a_0, \ldots, a_{n-1})\in \mathcal I^{(n)}$. We define $y_{\bar{a}}\in G_{a_0}$. Suppose $k \in a_0$. If $a_0 = \{ k\}$ we let $y_{\bar a}(k) =0$. Otherwise, we let 
\[
y_{\bar a}(k) = x_{\{ k \}, a_0,\ldots, a_{n-1}}(k).
\]
It is straightforward to check that $\delta^n(\bar y) =\bar x$ using the coherence of $\bar x$. 
\end{proof}
Let us also define explicitly the map 
\[
\varphi_n\colon {\lim}^{n} \mathbf P/\mathbf G_{\mathcal I} \to {\lim}^{n+1}\mathbf G_{\mathcal I}.
\]
Let $\bar x\in\mathbf P/\mathbf G^{(n)}_{\mathcal I}$ be such that $\delta^{n+1}(\bar x)=0$. Note that ${\bar x} = (x_{\bar a}\colon {\bar a} \in \mathcal I^{(n)})$ where $x_{\bar a} \in P_{a_0}/G_{a_0}$. Let $y_{\bar a} \in P_{a_0}$ be such that $x_{\bar a} = G_{a_0} + y_{\bar a}$. Let $\bar y = (y_{\bar a}\colon \bar a \in \mathcal I^{(n)})$. Since $\delta^{n+2}\circ \delta^{n+1} =0$, we have that $\delta^{n+1}(\bar y) \in {\rm ker}(\delta^{n+2})$. We then let $\varphi_n(\bar x)= \delta^{n+1}(\mathbf G_{\mathcal I}^{(n)})+ \delta^{n+1}(\bar y)$. The fact that $\varphi_n(\bar x)$ is coherent follows from the fact that $\delta^{n+2}\circ \delta^{n+1} =0$. To see that the definition of $\varphi_n(\bar x)$ does not depend on the choice of $\bar y$, let $\bar y'$ be another such sequence and note that $\bar y' - \bar y \in \mathbf G_{\mathcal I}^{(n)}$, hence $\delta^{n+1}(\bar y' - \bar y) \in \delta^{n+1}(\mathbf G_{\mathcal I}^{(n)})$. The fact that $\varphi_n$ is a homomorphism is straightforward. Let us check that it is injective. Suppose $\bar x\in \mathbf P/\mathbf G_{\mathcal I}^{(n)}$ is such that $\varphi_n(\bar x)=0$. Let $\bar y$ be a lifting of $\bar x$, i.e., $y_{\bar a}\in P_{a_0}$ is such that $x_{\bar a} = G_{a_0} + y_{\bar a}$, for all $\bar a \in \mathcal I^{(n)}$. Then $\delta^{n+1}(\bar y) \in \delta^{n+1}(\mathbf G_{\mathcal I}^{(n)})$. Fix $\bar z \in \mathbf G_{\mathcal I}^{(n)}$ such that $\delta^{n+1}(\bar y)= \delta^{n+1}(\bar z)$. Therefore $\delta^{n+1}(\bar y - \bar z)=0$. Since $\lim^n \mathbf P_{\mathcal I} =0$ there is $\bar u \in \mathbf P_{\mathcal I}^{(n)}$ such that $\delta^n(\bar u) =\bar y - \bar z$. Let $\bar v \in \mathbf P/ \mathbf G_{\mathcal I}^{(n-1)}$ be defined by $v_{\bar a}= G_{a_0}+ u_{\bar a}$, for all $\bar a$. Then $\delta^n (\bar v)= \bar x$, as desired. Finally, let us check that $\varphi_n$ is onto. Let $\bar y \in \mathbf G_{\mathcal I}^{(n+1)}$ be such that $\delta^{n+2}(\bar y)= 0$. We can view $\bar y$ as a member of $\mathbf P_{\mathcal I}^{(n+1)}$. Since $\lim^{n+1} \mathbf P_{\mathcal I}=0$ we can find $\bar z \in \mathbf P_{\mathcal I}^{(n)}$ such that $\delta^{n+1}(\bar z)= \bar y$. Now let $\bar u$ be defined by $u_{\bar a} = G_{a_0}+ z_{\bar a}$, for all $\bar a$. It follows that $\delta^{n+1}(\bar u)=0$ and $\varphi_n(\bar u)= \bar y$, as required. 

\medskip

The upshot of this analysis is that $\lim^1 \mathbf G_{\mathcal I}$ is isomorphic to the quotient of $\lim^0 \mathbf P/\mathbf G_{\mathcal I}$ by $\lim^0 \mathbf P$, and $\lim^{n+1} \mathbf G_{\mathcal I}$ is isomorphic to $\lim^n \mathbf P/ \mathbf G_{\mathcal I}$, for all $n\geq 1$.

\medskip

We now consider the question whether $\lim^n \mathbf G_{\mathcal I}$ vanishes or not depending on the cofinality of $\mathcal I$ under inclusion. First note that if ${\rm cof}(\mathcal I)= \aleph_0$, then by Theorem \ref{thm:flasquegoblot} $\lim^n \mathbf G_{\mathcal I}=0$, for all $n \geq 1$. In order to show that $\lim^{n+1} \mathbf G_{\mathcal I}\neq 0$, by \eqref{eq:long-exact}, it suffices to show that $\lim^n \mathbf P/\mathbf G_{\mathcal I} \neq 0$. We find it convenient to replace $\mathbf P/\mathbf G_{\mathcal I}$ by a slightly different system $\mathbf Q_{\mathcal I}$. Consider the ordered set $(\mathcal I,\subseteq_*)$, where $\subseteq_*$ denotes inclusion modulo finite sets. Let
\[
\mathbf Q_{\mathcal I}= \{ P_a/G_a , p^b_a \colon a\subseteq_* b \in \mathcal I\}.
\]
Since $\mathcal I$ contains the Fr\'echet ideal, any coherent nontrivial sequence $\bar x \in \mathbf Q_{\mathcal I}^{(n)}$ is also coherent and nontrivial in $\mathbf P\mathbf /\mathbf G_{\mathcal I}$. 
% is also the identity map ${\rm id}_{\mathcal I}$ is increasing and cofinal from $(\mathcal I, \subseteq)$
%to $(\mathcal I, \subseteq_*)$. Then, following the notation introduced before Theorem \ref{thm:cofinal}, 
%${\rm id}_{\mathcal I}^*(\mathbf Q_{\mathcal I})= \mathbf P/\mathbf G_{\mathcal I}$.
%Therefore, by Theorem \ref{thm:cofinal}, we have that 
%\[
%{\lim}^{n} \mathbf Q_{\mathcal I} \cong {\lim}^{n} \mathbf P/\mathbf G_{\mathcal I}. 
%\] 
%We find it more to work with the system $\mathbf Q_{\mathcal I}$. instead of $\mathbf P/\mathbf G_{\mathcal I}$.
Henceforth, we will work with $\mathbf Q_{\mathcal I}$ instead of $\mathbf P/\mathbf G_{\mathcal I}$. To simplify notation, when we are working with this system we will write $Q_a$ for $P_a/G_a$, when $a\in \mathcal I$. 

Let us now turn to the first interesting case and assume that $\mathcal I$ is an ideal on a countable set $X$, and ${\rm cof}(\mathcal I)=\aleph_1$. We would like to know if $\lim^1 \mathbf G_{\mathcal I}$ vanishes or not. It turns out that the answer depends on the choice of groups $G_x$, for $x\in X$. Recall that two functions with the same domain $f$ and $g$ are said almost equal, written $f=_*g$, if the set of points on which they differ is finite.

\begin{proposition}\label{ideals-Z} Suppose $X$ is countable and $G_x= \mathbb Z$, for all $x\in X$. 
Let $\mathcal I$ be an ideal on $X$ with ${\rm cof}(\mathcal I)= \aleph_1$. Then $\lim^1 \mathbf G_{\mathcal I} \neq 0$. 
\end{proposition}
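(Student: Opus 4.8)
The plan is to run the homological reduction already prepared in this section and then, by a transfinite construction of length $\omega_1$, produce a single coherent family for $\mathbf Q_{\mathcal I}$ that is not induced by any global integer-valued function. First I would record the reduction. Taking $n=0$ in the long exact sequence \eqref{eq:long-exact} and using the preceding Lemma that ${\lim}^1\mathbf P_{\mathcal I}=0$, we obtain
\[
{\lim}^1 \mathbf G_{\mathcal I}\cong \operatorname{coker}\!\big({\lim}^0\mathbf P_{\mathcal I}\to {\lim}^0\mathbf P/\mathbf G_{\mathcal I}\big).
\]
Since the transition maps of $\mathbf P_{\mathcal I}$ are coordinate projections and each $\{k\}$ lies in $\mathcal I$, a thread of $\mathbf P_{\mathcal I}$ is determined by its values at singletons, so ${\lim}^0\mathbf P_{\mathcal I}$ is simply $\mathbb Z^X$, mapping to ${\lim}^0\mathbf P/\mathbf G_{\mathcal I}$ by $y\mapsto (y\restriction a+G_a)_a$. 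Replacing $\mathbf P/\mathbf G_{\mathcal I}$ by $\mathbf Q_{\mathcal I}$ as above, it suffices to exhibit a coherent family $\bar x=(x_a)_{a\in\mathcal I}$ with $x_a\in\mathbb Z^a$ and $x_b\restriction a=_* x_a$ whenever $a\subseteq_* b$, such that no $y\in\mathbb Z^X$ satisfies $y\restriction a=_* x_a$ for all $a\in\mathcal I$.

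For the construction I would fix a cofinal family $\{a_\alpha:\alpha<\omega_1\}$ in $(\mathcal I,\subseteq)$, closed under finite unions, and define $x_{a_\alpha}\in\mathbb Z^{a_\alpha}$ by recursion on $\alpha$, maintaining the invariant that $x_{a_\beta}$ and $x_{a_{\beta'}}$ agree modulo finite on $a_\beta\cap a_{\beta'}$ for all $\beta,\beta'<\alpha$. At stage $\alpha$ the values already chosen impose, on each of the countably many sets $a_\alpha\cap a_\beta$ with $\beta<\alpha$, a partial function, and the invariant forces these to agree pairwise modulo finite; amalgamating countably many such pairwise-compatible partial functions into one function on their union is exactly the vanishing of ${\lim}^1$ in countable cofinality, i.e.\ the base case of Theorem~\ref{thm:flasquegoblot}. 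Letting $x_{a_\alpha}$ be any such amalgam, and choosing its values freely on the remaining ``fresh'' coordinates of $a_\alpha$, preserves the invariant. The family so obtained on the cofinal set then extends to a coherent family on all of $\mathcal I$ by setting $x_a=x_{a_\alpha}\restriction a$ for any $a_\alpha\supseteq_* a$, the invariant guaranteeing that this is well defined modulo finite and coherent.

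The last step, nontriviality, is where the real work lies, and I expect it to be the main obstacle. The freedom on the fresh coordinates must be spent to defeat every potential trivializer $y\in\mathbb Z^X$, but with no appeal to $\CH$ there may be $2^{\aleph_0}$ such $y$ against only $\aleph_1$ stages, so a naive diagonalization against an enumeration of the $y$'s is unavailable. This is precisely where the hypothesis $G_x=\mathbb Z$ enters: because $\mathbb Z$ is unbounded, the fresh values can be made to grow, and the plan is to arrange them so that a trivializer $y$ — forced to agree modulo finite with $x_{a_\alpha}$ on $a_\alpha$ for \emph{every} $\alpha$ — would reconcile the local trivializers of the countable initial pieces into one, thereby witnessing that a countable subfamily of $\{a_\alpha\}$ is already cofinal and contradicting $\cof(\mathcal I)=\aleph_1$. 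The hard part will be engineering this interaction across an \emph{non-linear} cofinal structure: unlike the classical case $\mathfrak{d}=\aleph_1$, where $\mathfrak{b}=\mathfrak{d}=\aleph_1$ furnishes a cofinal chain (a scale) and the growth can be bookkept linearly, a general ideal of cofinality $\aleph_1$ need not contain any cofinal chain — for example the ideal generated by an almost disjoint family of size $\aleph_1$ — so coherence and the diverging values must be controlled simultaneously over a cofinal set with no cofinal well-ordered subset.
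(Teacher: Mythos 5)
Your reduction to $\mathbf Q_{\mathcal I}$ and the coherence half of the construction match the paper's setup, but the proof is incomplete at exactly the point you flag yourself: you never actually give the nontriviality argument, and what you sketch in its place (``a trivializer would reconcile the local trivializers \ldots witnessing that a countable subfamily of $\{a_\alpha\}$ is already cofinal'') is not an argument --- it is not clear how agreement of a single $y\in\mathbb Z^X$ with each $x_{a_\alpha}$ modulo finite would ever produce a countable cofinal subfamily, and the paper derives its contradiction from something else entirely. Since for $n=1$ the whole content of the proposition is the nontriviality (coherent families are cheap to build, as you note), this is a genuine gap rather than a routine detail.

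The missing idea is the following. After thinning the cofinal family so that no $a_\xi$ lies in the ideal generated by the finite sets together with $\{a_\eta : \eta<\xi\}$, one chooses for each $\xi$ an infinite $b_\xi\subseteq a_\xi$ with $b_\xi\cap a_\eta$ finite for all $\eta<\xi$ (this is the correct notion of ``fresh'' coordinates; your $a_\alpha\setminus\bigcup_{\beta<\alpha}a_\beta$ can easily be empty). On $b_\xi$ one sets $f_\xi$ equal to the successor function $e_\xi(n)=\min\bigl(b_\xi\setminus(n+1)\bigr)$ of $b_\xi$ --- this is where the unboundedness of $\mathbb Z$ is genuinely used, since $e_\xi$ takes arbitrarily large values. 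If $f\colon\omega\to\mathbb Z$ trivialized the family, then by the pigeonhole principle uncountably many $\xi$ would satisfy $f\restriction(b_\xi\setminus m)=e_\xi\restriction(b_\xi\setminus m)$ for one fixed $m$; taking two such indices $\eta\neq\xi$ with a common point $k\geq m$ of $b_\eta\cap b_\xi$ and iterating $k_{i+1}=f(k_i)$ reconstructs both $b_\eta\setminus k$ and $b_\xi\setminus k$ as the same set $\{k_0,k_1,\ldots\}$, contradicting the almost disjointness of $b_\eta$ and $b_\xi$. Note that this mechanism works for an arbitrary ideal of cofinality $\aleph_1$ with no cofinal chain, so the ``non-linear cofinal structure'' you worry about in your last paragraph is not an obstacle once the $b_\xi$ are chosen almost disjoint from all earlier generators.
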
 

\begin{proof} 
We may assume $X= \omega$. We show that there is a coherent nontrivial sequence in $\mathbf Q_{\mathcal I}^{(0)}$. Let $\{ a_\xi \colon \xi < \omega_1 \}$ be a cofinal subset of $\mathcal I$ under $\subseteq_*$. We may assume that all the $a_\xi$ are infinite, and no $a_\xi$ belongs to the ideal generated by finite sets and the $a_\eta$, for $\eta < \xi$. It suffices to show that there is a family $\{ f_\xi \colon \xi < \omega_1 \}$ of functions such that: 
\begin{itemize} 
\item $f_\xi \colon a_\xi \to \mathbb Z$, for all $\xi<\omega_1$,
\item if $\xi \neq \eta$ then $f_\xi \! \restriction \! (a_\xi \cap a_\eta) =_* f_\eta \! \restriction \! (a_\xi \cap a_\eta)$, 
\item there is no $f\colon \omega \to \mathbb Z$ such that $f \! \restriction \! a_\xi =_* f_\xi$, for all $\xi<\omega_1$. 
\end{itemize} 

\noindent The second condition states that the family $\{ f_\xi \colon \xi < \omega_1\}$ is coherent and the third that it is nontrivial. To begin, pick by induction on $\xi$, an infinite subset $b_\xi$ of $a_\xi$ such that $b_\xi \cap a_\eta$ is finite for all $\eta < \xi$. For each $\xi$, let $e_\xi\colon b_\xi \to b_\xi$ be defined by:
\[
e_\xi (n)= \min (b_\xi \setminus (n+1)),
\]
i.e., if $n\in b_\xi$ then $e_\xi(n)$ is the next element of $b_\xi$ above $n$. Now, by induction on $\xi$, we build functions $f_\xi \colon a_\xi \to \mathbb Z$ such that:
\begin{itemize} 
\item if $\eta < \xi$ then $f_\eta \! \restriction \! (a_\eta \cap a_\xi ) =_* f_\xi \! \restriction \! (a_\eta \cap a_\xi)$,
\item $f_\xi \! \restriction \! b_\xi =_* e_\xi$, for all $\xi$. 
\end{itemize} 
Suppose we have built $f_\eta$, for $\eta < \xi$. We need to construct $f_\xi$. First, by an easy diagonalisation argument we can find integers $n_\eta$, for $\eta <\xi$, such that:
\begin{itemize} 
\item $a_\eta \cap b_\xi \subseteq n_\eta$, for all $\eta < \xi$, 
\item if $\eta < \zeta < \xi$ then $f_\eta (k) = f_\zeta (k)$, for all $k \in (a_\eta \setminus n_\eta) \cap (a_\zeta \setminus n_\zeta)$. 
\end{itemize} 
For $n \in a_\xi$ we define $f_\xi(n)$ as follows: 
\[
f_\xi(n)=
\begin{cases}
f_\eta (n) &\text{ if } n\in a_\eta \setminus n_\eta,\\
e_\xi (n) &\text{ if } n \in b_\xi ,\\
0 &\text{ otherwise.} 
\end{cases}
\]
It is clear that the $f_\xi$ constructed in this way are as desired. Let us now show that there is no $f\colon \omega \to \mathbb Z$ such that $f \! \restriction \! a_\xi =_* f_\xi$, for all $\xi$. Suppose otherwise and fix such $f$. Let $m_\xi$ be the least such that $f$ agrees with $e_\xi$, for all $n \in b_\xi \setminus m_\xi$. We can then find an uncountable $Z\subseteq\omega_1$ and an integer $m$ such that $m_\xi = m$, for all $\xi \in Z$. Since all the $b_\xi$ are infinite, we can find distinct $\eta$ and $\xi$ in $Z$ and $k\geq m$ such that $k \in b_\eta \cap b_\xi$. We can then define $k_0=k$, and $k_{i+1}= f(k_i)$, for all $i$. By the definition of $e_\eta$ and $e_\xi$ we have that: 
\[
b_\eta \setminus k = \{ k_0, k_1, \ldots, k_i, \ldots \} = b_\xi \setminus k,
\]
which contradicts the fact that $b_\eta$ and $b_\xi$ are almost disjoint. 
\end{proof}

We now turn to the case $X$ is countable, $G_x= \mathbb Z_2$, for all $x\in X$, and $\mathcal I$ as an ideal on $X$ of cofinality $\aleph_1$. It turns out that the question whether $\lim^1 \mathbf G_{\mathcal I}$ vanishes in this case depends on additional axioms of set theory. In fact, it was shown in \cite[Theorem 4.26]{shelah_2017} that it is relatively consistent with $\ZFC$ that there is an almost disjoint family $\mathcal A$ of size $\aleph_1$ of subsets of $\omega$ such that the ideal $\mathcal I$ generated by $\mathcal A$ and the Fr\'echet ideal has the uniformization property. In our terminology it says that if we let $G_n= \mathbb Z_2$, for all $n$, then $\lim^1 \mathbf G_{\mathcal I}=0$. This is complemented by the following. ($\MA_{\aleph_1}$ is Martin's Axiom, see \cite[III.3]{Kunen.2011})

\begin{proposition}\label{ideals-Z2} 
Suppose $X$ is countable and $G_x= \mathbb Z_2$, for all $x\in X$. Let $\mathcal I$ be an ideal on $X$ with ${\rm cof}(\mathcal I)= \aleph_1$. Suppose that either $2^{\aleph_0} < 2^{\aleph_1}$ or ${\rm MA}_{\aleph_1}$ holds. Then $\lim^1 \mathbf G_{\mathcal I} \neq 0$. 
\end{proposition}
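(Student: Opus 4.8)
As established above, $\lim^1\mathbf G_{\mathcal I}$ is the quotient of $\lim^0(\mathbf P/\mathbf G_{\mathcal I})$ by the image of $\lim^0\mathbf P_{\mathcal I}$; so, passing to $\mathbf Q_{\mathcal I}$, it suffices to exhibit a coherent $\bar x\in\mathbf Q_{\mathcal I}^{(0)}$ not induced by any global section. Since $G_x=\mathbb Z_2$ we identify $Q_a$ with $\mathcal P(a)/\mathrm{fin}$, so that, fixing a cofinal $\{a_\xi:\xi<\omega_1\}$ in $(\mathcal I,\subseteq_*)$ (legitimate by Theorem~\ref{thm:cofinal}), the task becomes to find functions $f_\xi\colon a_\xi\to\mathbb Z_2$ that are coherent, i.e.\ $f_\xi=_* f_\eta$ on $a_\xi\cap a_\eta$, but admit no global $f\colon X\to\mathbb Z_2$ with $f\restriction a_\xi=_* f_\xi$ for all $\xi$. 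First I would use $\mathrm{cof}(\mathcal I)=\aleph_1$ to arrange that $a_\xi$ lies outside the ideal generated by $\{a_\eta:\eta<\xi\}$ and the finite sets; a diagonalization then yields infinite $u_\xi\subseteq a_\xi$ with $u_\xi\cap a_\eta$ finite for all $\eta<\xi$. Because any countable coherent family is trivial (the base case of Theorem~\ref{thm:flasquegoblot}), at stage $\xi$ one can always meet the coherence constraints coming from $a_\xi\cap\bigcup_{\eta<\xi}a_\eta$, and then, since $u_\xi$ meets each earlier $a_\eta$ in a finite set, prescribe $f_\xi\restriction u_\xi$ arbitrarily without breaking coherence. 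The freedom on the almost disjoint sets $u_\xi$ is what the two cases exploit.

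Assume first $2^{\aleph_0}<2^{\aleph_1}$, so ${\rm w}\diamondsuit(\omega_1)$ holds. As $X$ is countable, any candidate trivialization $f$ is a real, hence is coded by $h\restriction\omega$ for some $h\colon\omega_1\to2$. Let $P^0_\xi,P^1_\xi$ be the constant patterns $0,1$ on $u_\xi$, and define $F\colon 2^{<\omega_1}\to2$ by $F(h\restriction\xi)=i$ when the real coded by $h\restriction\omega$ agrees modulo finite with $P^i_\xi$ on $u_\xi$ (for the unique such $i$, if any), and $F(h\restriction\xi)=0$ otherwise. Taking a guessing sequence $g$ for $F$, I carry out the recursion above with the added demand $f_\xi\restriction u_\xi=P^{1-g(\xi)}_\xi$. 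If some $f$ were a trivialization, then for a code $h$ of $f$ the set of $\xi$ with $g(\xi)=F(h\restriction\xi)$ is stationary, and at each such $\xi$ one has $f\restriction u_\xi\neq_* f_\xi\restriction u_\xi$: either $f\restriction u_\xi=_*P^{g(\xi)}_\xi$, which differs infinitely from $P^{1-g(\xi)}_\xi=f_\xi\restriction u_\xi$, or $f\restriction u_\xi$ matches neither pattern and so already differs infinitely from $f_\xi\restriction u_\xi$. Either way $f\restriction a_\xi\neq_* f_\xi$, a contradiction, so the family is nontrivial.

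The case $\mathrm{MA}_{\aleph_1}$ is where I expect the real difficulty. Now $2^{\aleph_0}=2^{\aleph_1}>\aleph_1$, so ${\rm w}\diamondsuit$ fails and, crucially, there are more potential trivializations than stages, so one cannot defeat them one at a time as above. Instead I would exploit $\mathfrak p>\aleph_1$ to construct the coherent family as a Hausdorff-type gap: maintain $D_\xi=f_\xi^{-1}(1)$ with $D_\xi\cap a_\eta=_* D_\eta\cap a_\xi$, arranging that the associated pre-gap has no interpolant, and use the pseudo-intersections and bounded scales supplied by $\mathrm{MA}_{\aleph_1}$ to push the recursion through limit stages while keeping the gap open. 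The hard point is to guarantee that the maintained gap condition is genuinely incompatible with the existence of a global $f$; equivalently, that the natural forcing which would uniformize our coloring is, for that coloring, not ccc, so that $\mathrm{MA}_{\aleph_1}$ cannot produce a trivializing real. A cleaner alternative I would try in parallel is to apply the long exact sequence associated with $0\to\mathbb Z\xrightarrow{\times2}\mathbb Z\to\mathbb Z_2\to0$, which gives an embedding $\lim^1\mathbf G^{\mathbb Z}_{\mathcal I}/2\lim^1\mathbf G^{\mathbb Z}_{\mathcal I}\hookrightarrow\lim^1\mathbf G_{\mathcal I}$; since $\lim^1\mathbf G^{\mathbb Z}_{\mathcal I}\neq0$ by Proposition~\ref{ideals-Z}, it would then suffice to show that under $\mathrm{MA}_{\aleph_1}$ this group fails to be $2$-divisible, which is the crux of that approach.
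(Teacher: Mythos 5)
Your first case is essentially correct. You route it through ${\rm w}\diamondsuit(\omega_1)$ (legitimate, since by Devlin--Shelah it is equivalent to $2^{\aleph_0}<2^{\aleph_1}$), whereas the paper uses a bare counting argument: assign to each $h\colon\omega_1\to\mathbb Z_2$ the family of constant functions $g^h_\xi\equiv h(\xi)$ on $b_\xi$, observe that a single $f\colon\omega\to\mathbb Z_2$ can trivialize at most one such family, and conclude from $2^{\aleph_0}<2^{\aleph_1}$ that some $h$ yields a nontrivial family. Your guessing-function version works, and your reduction to prescribing $f_\xi$ freely on the almost disjoint sets $u_\xi$ matches the paper's setup exactly; it is just heavier machinery than needed.

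The ${\rm MA}_{\aleph_1}$ case, however, is a genuine gap: you name two candidate strategies and in each one you explicitly defer the step that carries all the content. For the Hausdorff-gap approach, the entire difficulty is precisely the point you flag --- producing a coloring whose uniformization forcing fails to be ccc --- and nothing in your sketch indicates how to achieve this for an arbitrary ideal of cofinality $\aleph_1$ (note the paper's ZFC gap argument, Proposition~\ref{thm:hausgap}, only covers ideals generated by a tower). For the long-exact-sequence approach, the embedding of $\lim^1\mathbf G^{\mathbb Z}_{\mathcal I}/2\lim^1\mathbf G^{\mathbb Z}_{\mathcal I}$ into $\lim^1\mathbf G_{\mathcal I}$ is fine, but a nonzero abelian group can perfectly well be $2$-divisible, so Proposition~\ref{ideals-Z} alone gives nothing; you would have to prove non-$2$-divisibility of $\lim^1\mathbf G^{\mathbb Z}_{\mathcal I}$ under ${\rm MA}_{\aleph_1}$, and you do not. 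What the paper actually does is concrete and quite different: it applies ${\rm MA}_{\aleph_1}$ to capture uncountably many of the successor functions $e_\xi$ of the sets $b_\xi$ as branches of a single $2$-branching subtree $T$ of $\omega^{<\omega}$, and defines $g_\xi(k)\in\{0,1\}$ to record which of the two children is taken at each node along the branch. A trivializing $f$ would then, after a pigeonhole stabilization of the cut-off point and the initial segment, allow one to reconstruct the branch step by step from $f$ alone, forcing $b_\xi=b_\eta$ for two distinct indices and contradicting almost disjointness. That coding idea --- making the coloring $g_\xi$ determine the set $b_\xi$ itself --- is the missing ingredient in your proposal.
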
 

\begin{proof} 
We again assume $X=\omega$. Let $\{ a_\xi \colon \xi <\omega_1 \}$ be a cofinal subset of $\mathcal I$ under $\subseteq_*$ such that no $a_\xi$ is in the ideal generated by finite sets and the $a_\eta$, for $\eta <\xi$. As in the proof of Proposition \ref{ideals-Z} we construct a nontrivial coherent family of functions $\{f_\xi \colon \xi < \omega_1\}$, but this time the functions $f_\xi$ take values in $\mathbb Z_2$ rather than in $\mathbb Z$. Pick for each $\xi <\omega_1$, an infinite subset $b_\xi$ of $a_\xi$ which has finite intersection with the $a_\eta$, for $\eta <\xi$. Now, as in Proposition~\ref{ideals-Z}, given a family of functions $\{ g_\xi \colon \xi <\omega_1\}$ with $g_\xi\colon b_\xi \to \mathbb Z_2$, for $\xi <\omega_1$, we can extend as before each $g_\xi$ to a function $f_\xi$ on $a_\xi$ such that the family $\{f_\xi \colon \xi <\omega_1\}$ is coherent. So, it suffices to show that there is a choice of such functions $\{ g_\xi\colon \xi < \omega_1\}$ which is nontrivial, i.e., there is no $f\colon \omega \to \mathbb Z_2$ such that $f\restriction b_\xi =_* g_\xi$, for all $\xi <\omega_1$. 

Suppose first that $2^{\aleph_0} < 2^{\aleph_1}$. For every $h\colon \omega_1 \to \mathbb Z_2$ and $\xi <\omega_1$, we let we $g^h_\xi$ be the constant function taking value $h(\xi)$ on $b_\xi$. Note that any $f\colon \omega \to \mathbb Z_2$ can trivialize the family $\{ g^h_\xi\colon \xi <\omega_1\}$ for at most one $h\colon \omega_1 \to \mathbb Z_2$. Therefore, by simple counting there is $h\colon \omega_1 \to \mathbb Z_2$ such that the sequence $\{ g^h_\xi \colon \xi < \omega_1\}$ is nontrivial. 

Let us now assume $\MA_{\aleph_1}$. For each $\xi <\omega_1$, let as before $e_\xi$ be the function defined on $b_\xi$ such that if $k\in b_\xi$ then $e_\xi(k)$ is the next element of $b_\xi$ above $k$. We cannot use the function $e_\xi$ as in Proposition \ref{ideals-Z}, but by ${\rm MA}_{\aleph_1}$ we can find a $2$-branching subtree $T$ of $\omega^{<\omega}$ such that the set $Y$ of all $\xi <\omega_1$ such that $e_\xi \in [T]$ is uncountable. For each $s\in T$ let $u^s_0$ and $u^s_1$ be the two integers $u$ such that $s\, \, \widehat \, \, u \in T$. For $\xi \in Y$, let $g_\xi\colon b_\xi \to \mathbb Z_2$ be defined as follows. Suppose $k \in b_\xi$ and let $l= | b_\xi \cap (k+1)|$. Let
\[
g_\xi (k)= i \mbox{ if and only if } e_\xi(k)= u^{e_\xi \restriction l}_i. 
\] 
We claim that the sequence of functions $\{ g_\xi \colon \xi \in Y \}$ is nontrivial. Indeed, suppose $f\colon \omega \to \mathbb Z_2$ is a trivializing function. For each $\xi \in Y$ there is an integer $k_\xi \in b_\xi$ such that $g_\xi \restriction (b_\xi \setminus k_\xi) \subseteq f$. By the pigeon hole principle there is an uncountable $Z\subseteq Y$ and an integer $k$ such that $k_\xi = k$, for all $\xi \in Z$. By shrinking $Z$ if necessary we may assume that there is an integer $l$ and a sequence $s$ of length $l$ such that $| b_\xi \cap (k+1)|= l$ and $e_\xi \restriction l = s$, for all $\xi \in Z$. Let $\xi$ and $\eta$ be distinct elements of $Z$. % We show that $b_\xi = b_\eta$. 
We already know that $b_\xi \cap (k+1)= b_\eta \cap (k+1)=s$. Suppose $m \geq k$ is a common element of $b_\xi$ and $b_\eta$ and $b_\xi \cap (m +1)= b_\eta \cap (m+1)$. Let $t$ be the increasing enumeration of this finite set. We show that the next elements of $b_\xi$ and $b_\eta$ above $m$ are equal as well. Indeed, the next element of $b_\xi$ above $m$ is $u^t_{g_\xi(m)}$ and the next element of $b_\eta$ above $m$ is $u^t_{g_\eta(m)}$. Since $m\geq k$ we know that $g_\xi(m)$ and $g_\eta(m)$ are both equal to $f(m)$, hence the next element above $m$ of both $b_\xi$ and $b_\eta$ is equal to $u^t_{f(m)}$. It follows by induction that $b_\xi = b_\eta$, which is a contradiction. 
\end{proof} 
It turns out that no additional set theoretic hypothesis is needed for the above result if the ideal $\mathcal I$ is generated by a set linearly ordered under inclusion modulo the Fr\'echet ideal. 
%Set theory does not enter play in case the ideal of interest is generated by a linear set. 
Recall that a {\em tower} on $\omega$ is a sequence $\{ a_\xi \colon \xi <\kappa\}$ of subsets of $\omega$ such that $a_\xi \subset_* a_\eta$, for all $\xi < \eta <\kappa$. The following is an instance of a many faceted phenomenon which can be traced to Hausdorff gaps. It can be obtained for example from \cite[Proposition 4.22]{Bekkali1991}, \cite[Theorem 2.4]{Dow-Simon-Vaughan}, and can also be proved using Todor\v{c}evi\'c's $\rho$-functions and $C$-sequences (see \cite{Todorcevic2007}). We include a slightly different proof for completeness. 

\begin{proposition}\label{thm:hausgap}
Suppose $\mathcal I$ is an ideal on $\omega$ generated by a tower of length $\omega_1$. Let $G_n= \mathbb Z_2$, for all $n$. Then $\lim^1 \mathbf G_{\mathcal I} \neq 0$. 
\end{proposition}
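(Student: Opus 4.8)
\emph{Reduction.} By the long exact sequence \eqref{eq:long-exact} together with the vanishing of $\lim^n\mathbf P_{\mathcal I}$, the group $\lim^1\mathbf G_{\mathcal I}$ is the cokernel of $\lim^0\mathbf P_{\mathcal I}\to\lim^0\mathbf P/\mathbf G_{\mathcal I}$, so, passing to $\mathbf Q_{\mathcal I}$, it suffices to produce a coherent $\bar x\in\mathbf Q_{\mathcal I}^{(0)}$ that is not realized by any single element of $\prod_n\mathbb Z_2$. Since the generating tower $\{a_\xi\colon\xi<\omega_1\}$ is a linearly ordered cofinal subset of $(\mathcal I,\subseteq_*)$, Proposition \ref{prop:cofinallinear} lets me build the sequence only along the tower and then extend; nontriviality lifts, since any global realization of the extension would restrict to a realization along the tower. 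Because the tower is nested, $a_\xi\cap a_\eta=_* a_{\min(\xi,\eta)}$, so coherence of a family $f_\xi\colon a_\xi\to\mathbb Z_2$ reduces to $f_\xi\restriction a_\eta=_* f_\eta$ for $\eta<\xi$, and the goal becomes: construct such an increasing-coherent family for which there is no $f\colon\omega\to\mathbb Z_2$ with $f\restriction a_\xi=_* f_\xi$ for all $\xi$. This is exactly the setup of Proposition \ref{ideals-Z2}, and the plan is to mimic its $\MA_{\aleph_1}$ argument while replacing the use of $\MA_{\aleph_1}$ by an explicit recursion that exploits the nestedness of the tower.

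\emph{Construction.} Using that the tower is strictly increasing (so $a_\xi\setminus a_\eta$ is infinite for $\eta<\xi$), I first choose by recursion infinite sets $b_\xi\subseteq a_\xi$ that are almost disjoint from every $a_\eta$, $\eta<\xi$: fixing $\eta_0<\eta_1<\cdots$ cofinal in $\xi$, I take the $k$-th element of $b_\xi$ inside $a_\xi\setminus a_{\eta_k}$ and above the previous ones, which makes $\{b_\xi\}$ an almost disjoint family. Simultaneously I build a single $2$-branching tree $T\subseteq\omega^{<\omega}$ of increasing sequences so that the increasing enumeration $e_\xi$ of each $b_\xi$ is a branch of $T$: at stage $\xi$ I descend along an existing branch to a node that still has only one successor (infinitely many such nodes remain, as only countably many branches have been added), give that node a fresh second successor, and continue $e_\xi$ by drawing its values from $a_\xi\setminus a_{\eta_k}$ as above. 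This keeps $T$ two-branching and keeps the new range almost disjoint from all earlier $a_\eta$. From $T$ I then define $g_\xi\colon b_\xi\to\mathbb Z_2$ exactly as in Proposition \ref{ideals-Z2}, recording which of the two successors $e_\xi$ takes at each node, and I extend each $g_\xi$ to $f_\xi\colon a_\xi\to\mathbb Z_2$ so that the whole family stays coherent; such a coherent extension over the countably many predecessors is always available by the same diagonal argument as in the base case of Theorem \ref{thm:flasquegoblot}, and since $b_\xi$ meets each $a_{\eta_k}$ finitely I am free to keep $f_\xi\restriction b_\xi=_* g_\xi$.

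\emph{Nontriviality and the main obstacle.} Nontriviality is then the pressing-down clash of Proposition \ref{ideals-Z2}, now carried out with the relevant uncountable set equal to all of $\omega_1$, precisely because every $e_\xi$ is by construction a branch of the single tree $T$. If $f$ trivialized the family, then for each $\xi$ there is $k_\xi\in b_\xi$ with $g_\xi$ agreeing with $f$ on $b_\xi\setminus k_\xi$; pressing down yields an uncountable $Z$ and one integer $k$ with $k_\xi=k$ for all $\xi\in Z$, and, after shrinking, a common initial segment $s$ with $\max(s)=k$. For distinct $\xi,\eta\in Z$ the enumerations $e_\xi,e_\eta$ reach the same node $s$ at $k$, and from $g_\xi(k)=g_\eta(k)=f(k)$ the two-branching structure of $T$ forces the successor of $k$ in $b_\xi$ and in $b_\eta$ to be the same value $u^s_{f(k)}$; by induction $b_\xi=_* b_\eta$, contradicting almost disjointness. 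The main obstacle is the bookkeeping in the construction step: one must keep $T$ two-branching while threading $\aleph_1$ branches through it and, at the same time, force each branch to live inside $a_\xi$ and be almost disjoint from all earlier $a_\eta$. It is exactly the linear, nested structure of the tower — as opposed to a bare almost disjoint family, for which Proposition \ref{ideals-Z2} genuinely required an extra axiom — that makes this simultaneous construction possible in $\ZFC$.
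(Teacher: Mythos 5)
Your reduction to producing a coherent nontrivial family $\{f_\xi\colon a_\xi\to\mathbb Z_2\}$ along the tower, and the pressing-down argument you intend for nontriviality, both match the paper. The fatal problem is the construction step. You cannot build, by recursion on $\xi<\omega_1$, a single $2$-branching subtree $T$ of $\omega^{<\omega}$ in which every enumeration $e_\xi$ is a branch while each $e_\xi$ also carries an infinite ``fresh tail'' of values drawn from $a_\xi\setminus a_{\eta_k}$. Once $e_\xi$ leaves the tree $T_\xi$ built by stage $\xi$ (and in your scheme it does leave, at the node where you ``give a fresh second successor''), all the nodes $e_\xi\restriction l$ beyond the branching point are new nodes of $T$, so distinct stages contribute pairwise disjoint infinite sets of nodes; with $\aleph_1$ stages this requires $\aleph_1$ nodes, but $T\subseteq\omega^{<\omega}$ is countable. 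Equivalently, by Fodor's lemma the map sending $\xi$ to the stage at which its branching node was created is regressive, hence constant on a stationary set, so uncountably many $e_\xi$ would have to branch off from the countably many nodes created at one earlier stage, each of which can absorb at most one further successor if $T$ is to stay $2$-branching. Your parenthetical justification ``infinitely many such nodes remain, as only countably many branches have been added'' is exactly where this breaks: the downward closure of countably many branches can already be a fully splitting tree, and no bookkeeping can keep the recursion alive for $\omega_1$ steps --- the process must jam at some countable stage.

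This is not a repairable detail but the heart of the matter: it is why Proposition \ref{ideals-Z2} invokes $\MA_{\aleph_1}$ (which finds the tree \emph{after} the sets $b_\xi$ are fixed, so that uncountably many branches share the countably many nodes rather than each bringing its own), and why the paper's $\ZFC$ argument for towers uses a different device, namely Lemma \ref{L.orderings}: an increasing $\omega$-sequence of tree orderings $<_n$ on $\omega_1$ itself, each of height at most $\omega$ and cohering with the tower, with the ``which way did the branch go'' information encoded by the choice of the integer $n_{\xi,\alpha}$ rather than by a binary split inside $\omega^{<\omega}$. The counting obstruction disappears there precisely because those trees have uncountable levels. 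To complete your proof you would have to replace the single tree $T$ by some such stratified structure; as written, the object your nontriviality argument is applied to does not exist.
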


We will use the following lemma.

\begin{lemma}\label{L.orderings}
There are orderings $<_n$ on $\omega_1$, for $n< \omega$, such that: 
\begin{enumerate}
\item\label{enum1} $(\omega_1,<_n)$ is a tree ordering of height at most $\omega$, for all $n$, 
\item\label{enum2} if $\alpha <_m \beta$ and $m <n$, then $\alpha <_n \beta$, 
\item\label{enum3} if $\alpha < \beta < \omega_1$ then there is $n$ such that $\alpha <_n \beta$, 
\item\label{enum4} if $\alpha <_n \beta$, then $a_\alpha \setminus n \subseteq a_\beta$,
\item\label{enum5} if $\alpha$ is a limit and $\alpha<_n\beta$ then $\alpha+1\leq_n\beta$.
\end{enumerate}
\end{lemma}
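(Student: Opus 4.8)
The goal is to construct simultaneously a family of orderings $<_n$ on $\omega_1$ satisfying the five listed properties, where $\{a_\xi \colon \xi < \omega_1\}$ is the fixed generating tower. The natural strategy is a recursion on $\beta < \omega_1$: at stage $\beta$ we declare, for each $n$, exactly which $\alpha < \beta$ satisfy $\alpha <_n \beta$. The engine driving the construction is property~\eqref{enum4}, which ties $<_n$ directly to the tower: since $a_\alpha \subset_* a_\beta$ for all $\alpha < \beta$, for each such $\alpha$ there is a least $k = k(\alpha,\beta)$ with $a_\alpha \setminus k \subseteq a_\beta$. The first idea would be to simply set $\alpha <_n \beta$ whenever $k(\alpha,\beta) \leq n$ and $\alpha$ is already placed below $\beta$'s immediate $<_n$-predecessor, but the tree condition~\eqref{enum1} forces us to be careful: the set of $<_n$-predecessors of $\beta$ must be linearly ordered by $<_n$.

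\textbf{Key steps.}
First I would fix, for each $\beta$, an increasing cofinal $\omega$-sequence of ordinals below $\beta$ (when $\beta$ is a limit) to serve as a ``spine''; at successor $\beta = \gamma+1$ the predecessors of $\beta$ should essentially copy those of $\gamma$ together with $\gamma$ itself, which is exactly what property~\eqref{enum5} is designed to enforce at limit $\alpha$. Second, to guarantee the tree property~\eqref{enum1} I would define the $<_n$-predecessors of $\beta$ to be a single $<_n$-chain: choose for each $n$ a distinguished predecessor $\pi_n(\beta) < \beta$ and declare $\alpha <_n \beta$ iff $\alpha <_n \pi_n(\beta)$ or $\alpha = \pi_n(\beta)$, so that the predecessor set is automatically linearly ordered by transitivity; the monotonicity~\eqref{enum2} is maintained by requiring $\pi_m(\beta)$ to be $\leq_n$-below $\pi_n(\beta)$ for $m < n$. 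Third, property~\eqref{enum4} is secured by only admitting $\alpha = \pi_n(\beta)$ when $k(\alpha,\beta) \leq n$, and transitivity of the tower inclusions modulo finite sets then propagates~\eqref{enum4} down the chain after possibly enlarging the index $n$. Finally, the covering property~\eqref{enum3}---that every pair $\alpha < \beta$ is $<_n$-comparable for some $n$---would be arranged by ensuring the chains are rich enough: one shows that each $\alpha < \beta$ eventually appears as $\pi_n(\beta')$ for some $\beta'$ on the path from $\alpha$ to $\beta$, using the cofinal spines to relay comparability upward through limit stages.

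\textbf{Main obstacle.}
The delicate point is reconciling the tree condition~\eqref{enum1} (predecessor sets are $<_n$-chains) with the covering condition~\eqref{enum3} (enough comparabilities exist), while respecting the rigid constraint~\eqref{enum4} that comparability $\alpha <_n \beta$ can only hold once the tower has ``caught up,'' i.e. $k(\alpha,\beta) \leq n$. These pull in opposite directions: \eqref{enum4} forbids declaring $\alpha <_n \beta$ too early, but \eqref{enum3} demands that comparability does eventually appear for \emph{some} larger $n$. The bookkeeping that makes this work is the interaction between the cofinal spines at limit ordinals and the monotone family of thresholds $k(\alpha,\beta)$; I expect the heart of the argument, and the most error-prone calculation, to be verifying at limit stages $\beta$ that the chains assembled from the spine are genuinely $<_n$-linearly ordered and that every earlier ordinal is reached, which is precisely where property~\eqref{enum5} must be invoked to keep the successor and limit cases compatible.
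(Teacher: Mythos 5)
Your plan follows essentially the same route as the paper's proof: a recursion on $\beta$ in which, for each $n$, one either leaves $\beta$ at the root level of $<_n$ or places it immediately on top of a single distinguished predecessor $\pi_n(\beta)$ (so that the $<_n$-predecessors of $\beta$ are exactly the chain below $\pi_n(\beta)$, which gives the tree property for free), with the threshold for admitting $\pi_n(\beta)$ dictated by the least $k$ such that $a_{\pi_n(\beta)}\setminus k\subseteq a_\beta$, a copying rule at successors, and a cofinal $\omega$-spine at limits. One small correction: no ``enlarging of the index'' is needed to propagate condition (4) down the chain, since $a_\alpha\setminus n\subseteq a_{\pi}$ and $a_{\pi}\setminus n\subseteq a_\beta$ already imply $a_\alpha\setminus n\subseteq a_\beta$; condition (4) is transitive at a fixed level $n$.

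The one place where your plan, as stated, would actually fail is condition (5) at limit stages. If the cofinal spine $(\gamma_k)_{k}$ of a limit ordinal $\gamma$ is an arbitrary increasing sequence, it may contain a limit ordinal $\alpha=\gamma_k$; then for $n$ in the block where $\pi_n(\gamma)=\gamma_k=\alpha$ we have $\alpha<_n\gamma$ but $\alpha+1\not\leq_n\gamma$, because $\alpha+1\leq_n\gamma$ would force $\alpha+1\leq_n\gamma_k=\alpha$. The paper's fix is precisely to choose the spine to consist of successor ordinals: then a limit $\alpha<_n\gamma$ is strictly $<_n$-below the successor $\gamma_k$, and (5) follows by the induction hypothesis together with the successor-stage rule that $\alpha+1$ sits directly on top of $\alpha$. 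You should also make explicit that the thresholds $n_k$ at a limit stage must simultaneously secure the chain condition $\gamma_0<_n\gamma_1<_n\cdots<_n\gamma_k$ (needed both for the predecessor set of $\gamma$ to be linearly ordered and for (3) to relay comparability upward through $\gamma_k$) and the inclusions $a_{\gamma_i}\setminus n\subseteq a_\gamma$. With those two points supplied, your outline becomes the paper's argument.
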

\begin{proof}
We construct the orderings $<_n \restriction \! \! \gamma$, for all $n$, by induction on $\gamma$. Let $\gamma$ be a countable ordinal and suppose that the orders $<_n \restriction \! \gamma$, have been constructed so that they satisfy conditions \eqref{enum1}--\eqref{enum5} with $\gamma$ instead of $\omega_1$. We need to extend each $<_n \restriction \gamma$ to $<_n\restriction \gamma +1$, while preserving the conditions \eqref{enum1}--\eqref{enum5} this time with $\gamma +1$ instead of $\omega_1$. This means we have to decide where to put $\gamma$ in each of the orderings $<_n$. Assume first that $\gamma$ is a successor, e.g., $\gamma = \alpha +1$, for some $\alpha$. Let $n$ be the least such that $a_\alpha \setminus n \subseteq a_\gamma$. If $k < n$ we put $\gamma$ on the $0$-th level of $<_k$, i.e., we make $\gamma$ incomparable with all $\beta <\gamma$ in the sense of $<_k$. If $k \geq n$ we put $\gamma$ on top of $\alpha$, i.e., we let $\beta <_k \gamma$ iff $\beta \leq_k \alpha$. Notice that conditions \eqref{enum1}--\eqref{enum5} with $\omega_1$ replaced by $\gamma+1$ are preserved.

Suppose now that $\gamma$ is a limit ordinal. Since $\gamma$ has countable cofinality, we can fix an increasing sequence of successor ordinals $(\gamma_k)_k$ converging to $\gamma$. Define $n_k$ as the least $n>n_{k-1},k$ such that
\begin{itemize}
\item $\gamma_0 <_n \gamma_1 <_n \cdots <_n\gamma_k$,
\item $a_{\gamma_i}\setminus n\subseteq a_\gamma$, for each $i \leq k$.
\end{itemize}
For $i <n_0$ we let $\gamma$ be incomparable in $<_i$ with all $\beta <\gamma$. If $i\in [n_k,n_{k+1})$ we put $\gamma$ as an immediate successor of $\gamma_k$ in $<_i$, that is, for $\beta <\gamma$ we let $\beta <_i \gamma$ iff $\beta \leq_i \gamma_k$. This ends the definition of the orderings $<_n \restriction \gamma +1$. We need to verify that all the conditions are verified, but this is a trivial observation from the choice of $n_k$. Note that condition \eqref{enum5} is ensured by the fact that all $\gamma_k$ used in the above construction are successor ordinals. 
\end{proof}

\begin{proof}[Proof of Proposition~\ref{thm:hausgap}]
Let $\{ a_\xi \colon \xi < \omega_1\}$ be a tower on $\omega$ generating the ideal $\mathcal I$ modulo the Fr\'echet ideal. We construct a coherent nontrivial family of functions $\{ f_\alpha \colon \alpha < \omega_1\}$ such that $f_\alpha \colon a_\alpha \to \mathbb Z_2$, for all $\alpha<\omega_1$. Given Lemma~\ref{L.orderings}, we construct $b_\alpha\subseteq a_\alpha$, for all $\alpha < \omega_1$, such that
\begin{itemize} 
\item if $\alpha < \beta$ then $b_\beta \cap a_\alpha =_* b_\alpha$, 
\item there is no $b\subseteq \omega$ such that $b \cap a_\alpha =_* b_\alpha$, for all $\alpha < \omega_1$. 
\end{itemize} 
Suppose that such sets are constructed. Let $f_\alpha\colon a_\alpha \to \mathbb Z_2$, for $\alpha < \omega_1$, be defined by 
\[
f_\alpha(n)=
\begin{cases}1&\text{ if }n\in b_\alpha\\
0&\text{ otherwise}
\end{cases}
\]
Then $\{f_\alpha \colon \alpha < \omega_1\}$ is the required coherent nontrivial family of functions. 

Fix $\alpha$ and let $\xi<\alpha$. Let $n$ the least such that $\xi<_n\alpha$, and $\eta$ be the least ordinal $>\xi$ such that $\xi <_n \eta \leq_n \alpha$. Define $n_{\xi,\alpha}$ be the least integer in $a_\eta\setminus a_\xi$ which is bigger than $n$. Let
\[
b_\alpha=\{ n_{\xi,\alpha} \mid\xi < \alpha \}.
\]
We need to verify that the $b_\alpha$ thus constructed have the required properties. Suppose first that $\alpha<\beta$. We want to show that $b_\beta\cap a_\alpha=_*b_\alpha$. Let $m$ be the least such that $\alpha<_m\beta$. Since the height of $(\omega_1,<_{m-1})$ is at most $\omega$, the set $F_{m-1}(\beta)= \{ \xi <\beta \colon \xi <_{m-1} \beta\}$ is finite. Suppose $\xi \in \beta \setminus F_{m-1}(\beta)$ and let $n$ be the least such that $\xi <_n \beta$. Then $n \geq m$ and since $\alpha <_m \beta$, the ordering $<_n$ is a tree ordering and extends $<_m$, we have that $\xi$ and $\alpha$ are comparable in $<_n$. Moreover, if $\xi < \alpha$ then $n_{\xi,\alpha} = n_{\xi, \beta}$, and if $\xi \geq \alpha$ then $n_{\xi, \alpha} \notin a_\alpha$. This implies that $b_\beta \cap a_\alpha =_* b_\alpha$. 

Let us now show that there is no set $b$ such that $b\cap a_\alpha=_* b_\alpha$ for all $\alpha$. First, note that if $\alpha < \beta$ then $b_{\beta} \cap (a_{\alpha+1} \setminus a_\alpha)$ is finite. The reason is that for some $k$ we will have $\alpha <_n \alpha+1 \leq_n \beta$, for all $n \geq k$. So, for all $\xi \in \beta \setminus F_k(\beta)$, if $\xi <\alpha$ then $n_{\xi, \beta}\in a_\alpha$, and if $\xi >\alpha$ then $n_{\xi, \beta} \notin a_{\alpha +1}$. Suppose now $b$ is such that $b \cap a_\alpha = _* b_\alpha$, for all $\alpha$. Then, for each $\alpha$, there is $m_\alpha$ such that $(b \cap a_\alpha) \setminus m_\alpha = b_\alpha\setminus m_\alpha$ and $b \cap (a_{\alpha+1} \setminus a_\alpha) \subseteq m_\alpha$. By increasing $m_\alpha$, we may require that $\alpha <_{m_\alpha} \alpha +1$. Now, there is a fixed $m$ such that $m_\alpha = m$, for an uncountable set $X$ of limit ordinals. Take some $\beta \in X$ such that $X \cap \beta$ is infinite. Then there is $\alpha \in X \cap \beta$ such that the least $n$ such that $\alpha <_n \beta$ is bigger than $m$. By \eqref{enum5} $\alpha +1 \leq_n \beta$, and so $n_{\alpha,\beta} \in a_{\alpha +1}\setminus a_\alpha$. It follows that $n_{\alpha,\beta} \in b_\beta \setminus m$, thus $n_{\alpha,\beta} \in b$. On the other hand we have that $b \cap (a_{\alpha+1} \setminus a_\alpha) \subseteq m$, so $n_{\alpha,\beta} <m$, a contradiction.
\end{proof}

Let us now turn to higher derived limits. We will consider the following statement, for a countable abelian group $G$. 
 
 \medskip

\begin{description}
\item[$\varphi_n(G)$] {\em Suppose $\mathcal I$ is an ideal on $\omega$ generated by a tower of length $\omega_n$. Set $G_k= G$, for all $k< \omega$, and let $\mathbf G_{\mathcal I}$ be the associated inverse system of abelian groups. Then $\lim^n \mathbf G_{\mathcal I} \neq 0$. }
 \end{description}
 
 \medskip
 
By Proposition \ref{thm:hausgap}, the statement $\varphi_1(\mathbb Z_2)$ holds in $\ZFC$. It follows that $\varphi_1(G)$ holds, for every nontrivial abelian group $G$. We intend to leverage the statement $\varphi_n(G)$ with a certain combinatorial principle to get $\varphi_{n+1}(G)$. Recall the following weak version of diamond. 
 
 \medskip

\begin{description}
\item[${\rm w}\diamondsuit (S)$] {\em Suppose $\kappa$ is an uncountable regular cardinal and $S$ is a stationary subset of $\kappa$. Then, for every $F\colon 2^{ < \kappa} \to 2$, there is $g\colon \kappa \to 2$ such that, for every $f\colon \kappa \to 2$, the set $\{ \alpha \in S \colon g(\alpha)= F( f \restriction \alpha) \}$ is stationary.}
\end{description}
 
 \medskip
 
This principle was introduced by Devlin and Shelah \cite{Devlin-Shelah}, who showed that ${\rm w}\diamondsuit(\omega_1)$ is equivalent to $2^{\aleph_0} < 2^{\aleph_1}$. More generally, ${\rm w}\diamondsuit(\kappa^+)$ is equivalent to $2^\kappa < 2^{\kappa^+}$, for every regular $\kappa$. For our purposes we will require that weak diamond holds on a specific stationary subset of $\omega_{n+1}$, namely on the set
\[
S^n_{n+1}= \{ \alpha < \omega_{n+1}\colon {\rm cof}(\alpha) = \omega_n \}.
\]
The following stepping-up lemma is tha main technical tool we will use. 

%We intend to show that if we have ${\rm w}\diamondsuit(S^n_{n+1})$ and $\varphi_n(G)$ holds, for some group $G$, 
 %then so does $\varphi_{n+1}(G)$. 

\begin{lemma}\label{stepping-up} 
Let $n\geq 1$ and let $G$ be a countable abelian group. Suppose ${\rm w}\diamondsuit(S^n_{n+1})$ and $\varphi_n(G)$ both hold, then so does $\varphi_{n+1}(G)$. 
 \end{lemma}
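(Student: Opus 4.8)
The plan is to prove the stepping-up lemma $\varphi_n(G)\Rightarrow\varphi_{n+1}(G)$ by imitating the inductive architecture of Mitchell's theorem (Theorem~\ref{thm:mitchell}), but replacing the purely combinatorial freedom used there with the guessing power of ${\rm w}\diamondsuit(S^n_{n+1})$ to defeat all potential trivializations. Fix an ideal $\mathcal I$ on $\omega$ generated by a tower $\{a_\xi\colon \xi<\omega_{n+1}\}$, set all $G_k=G$, and work in the system $\mathbf Q_{\mathcal I}$, since by the long exact sequence \eqref{eq:long-exact} and the vanishing of $\lim^k\mathbf P_{\mathcal I}$ it suffices to produce a coherent nontrivial $\bar x\in\mathbf Q_{\mathcal I}^{(n)}$. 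I would build $\bar x$ by recursion on $\lambda<\omega_{n+1}$, maintaining coherence on the initial segment indexed by $\{a_\xi\colon\xi<\lambda\}$. At successor and low-cofinality limit stages I extend using Goblot's Theorem~\ref{thm:goblot} (available because $\cof(\lambda)\le\aleph_{n-1}$ gives a trivialization $\bar y^\lambda$ of the restricted sequence). At stages $\lambda$ of cofinality $\omega_n$ — exactly the points in $S^n_{n+1}$ — I twist the trivial extension by a coherent nontrivial $(n-1)$-dimensional sequence $\bar z^\lambda$ obtained from $\varphi_n(G)$ applied to the tower $\{a_\xi\colon\xi<\lambda\}$, whose cofinality is $\omega_n$.

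The role of weak diamond is to decide \emph{which} twist to perform at each such stage so that no global trivialization can succeed. Here is the mechanism I would set up. A candidate trivialization is a sequence $\bar u\in\mathbf Q_{\mathcal I}^{(n-1)}$, and as in Mitchell's proof one shows that the set of $\lambda$ that "see" enough of $\bar u$ (so that the relevant supports are absorbed below $\lambda$) is a club; intersecting with $S^n_{n+1}$ gives stationarily many good $\lambda$. At such a $\lambda$, applying the differencing operator $d_\lambda$ as in the nontriviality claim of Theorem~\ref{thm:mitchell} shows that if $\bar u$ trivialized $\bar x$ then it would induce a trivialization of $\bar z^\lambda$; so to win it suffices that at stationarily many $\lambda\in S^n_{n+1}$ my twist $\bar z^\lambda$ is \emph{not} trivialized by the restriction $d_\lambda(\bar u)$. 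I would code the local data below $\lambda$ (the tower restricted to $\lambda$, the partial construction, and the relevant part of any candidate $\bar u$) as an element of $2^\lambda$, and use ${\rm w}\diamondsuit(S^n_{n+1})$ with a colouring $F\colon 2^{<\omega_{n+1}}\to 2$ that, reading $f\restriction\lambda$, extracts a single bit distinguishing two pre-chosen incompatible twists; the guessing function $g$ then tells me which twist to install at stage $\lambda$ so as to diagonalize against the candidate encoded by $f$.

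The hard part will be the bookkeeping that makes the weak-diamond coding genuinely capture every possible trivialization $\bar u$ by a single $f\colon\omega_{n+1}\to 2$ whose initial segments are read correctly by $F$ at the guessing points. Two technical issues must be handled carefully. First, I must arrange a recursive coding of $\mathbf Q_{\mathcal I}$-data (each $Q_a=P_a/G_a$ involves a countable product modulo a direct sum, and $G$ is only countable, so the relevant restricted objects below $\lambda$ form a set of size at most $\card{\lambda}^{\aleph_0}$) as subsets of $\lambda$, in a way uniform enough that the club-many $\lambda$ correctly reconstruct the local picture from $f\restriction\lambda$. Second, I need the twist to depend on only finitely much information — ideally a single bit $F(f\restriction\lambda)$ — which forces me to fix in advance, at each $\lambda\in S^n_{n+1}$, a \emph{pair} of coherent nontrivial sequences that differ enough that at most one of them can be trivialized by any given candidate restriction, so that guessing the right bit always produces a non-trivializable twist. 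Once these coding conventions are pinned down, the verification that $C=\{\lambda\colon \text{supports of }\bar u\text{ are absorbed}\}$ is club, and that $d_\lambda$ converts a global trivialization into a local one contradicting the nontriviality of the installed twist, proceeds exactly as in Theorem~\ref{thm:mitchell}, now run stationarily often rather than at a single $\lambda$. The conclusion is a coherent nontrivial $\bar x\in\mathbf Q_{\mathcal I}^{(n+1)}$, which via \eqref{eq:long-exact} yields $\lim^{n+1}\mathbf G_{\mathcal I}\neq 0$, establishing $\varphi_{n+1}(G)$.
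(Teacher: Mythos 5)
Your overall architecture is the paper's: reduce to producing a coherent nontrivial element of $\mathbf Q_{\mathcal I}^{(n)}$ over the tower, recurse on $\lambda<\omega_{n+1}$ using the flasque Goblot theorem to extend at stages of cofinality $<\aleph_n$, install a twist by a coherent nontrivial $(n-1)$-dimensional sequence supplied by $\varphi_n(G)$ at stages of cofinality $\aleph_n$, and let ${\rm w}\diamondsuit(S^n_{n+1})$ choose between two candidate twists so as to defeat every coded candidate trivialization. The coding concerns you raise are real but resolvable: $\card{\mathbf Q\restriction\lambda^{(n-1)}}=2^{\card{\lambda}}$ (not $\card{\lambda}^{\aleph_0}$), so a monotone coding by elements of $2^\lambda$ exists on a club, and your ``pair of twists of which at most one is compatible with any given candidate restriction'' is exactly the paper's condition: the two extensions $\bar x^s\ast_\lambda\bar w$ and $\bar x^s\ast_\lambda(\bar w+\bar z)$ differ by a nontrivial coherent $\bar z$, and the $d_\lambda$ computation shows that trivializations of both extending a common $\bar u\restriction\lambda^{(n-1)}$ would force $\delta^{n-1}(d_\lambda(\bar u^1-\bar u^0))=\bar z$.

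One intermediate step as you state it would, however, fail: the claim that, ``as in Theorem~\ref{thm:mitchell}'', there is a club of $\lambda$ at which the supports of a candidate trivialization $\bar u$ are absorbed, so that $d_\lambda(\bar u)$ directly induces a trivialization of $\bar z^\lambda$. In Mitchell's system the groups are direct sums, so each $u_{\bar\alpha}$ has finite support and vanishes on coordinates involving $\lambda$ for club-many $\lambda$; in $\mathbf Q_{\mathcal I}$ the groups are $P_a/G_a$, products modulo sums, and no support argument is available. (Indeed, if a single trivialization induced a trivialization of $\bar z^\lambda$ at club-many $\lambda$, no weak diamond would be needed at all --- one $\lambda$ of cofinality $\omega_n$ would suffice, exactly as in Mitchell's theorem.) What is true, and what the paper uses, is only the one-sided statement above: a fixed $\bar u\restriction\lambda^{(n-1)}$ extends to trivialize at most one of the two candidate extensions at $\lambda$, so $F$ can output the bit that $\bar u$ cannot follow, and weak diamond guarantees that against every coded candidate the construction takes that bit at stationarily many $\lambda\in S^n_{n+1}$; the useful club here is only the one needed for the coding, the point being that over a linear index set the restriction of a trivialization is automatically a trivialization of the restriction. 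Your final paragraph in effect retreats to this correct mechanism, so the plan is salvageable once the support-absorption step is deleted; note also that the final object lives in $\mathbf Q_{\mathcal I}^{(n)}$, not $\mathbf Q_{\mathcal I}^{(n+1)}$, and that one must pass from the tower back to all of $\mathcal I$ via Proposition~\ref{prop:cofinallinear}.
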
 

\begin{proof} 
Let $\mathcal I$ be an ideal on $\omega$ generated by a tower $C=\{ a_\xi \colon \xi < \omega_{n+1}\}$. Let $\mathbf G_{\mathcal I}$ be the system obtained by setting $G_k=G$, for all $k\in\omega$. Let $\mathbf Q_{\mathcal I}$ be the associated quotient system. By Proposition~\ref{prop:cofinallinear}, it suffices to produce an coherent nontrivial sequence $\bar x \in (\mathbf Q_{\mathcal I}\! \restriction \! C)^{(n)}$. 
%Since $C$ is well-ordered in type $\omega_{n+1}$ by almost inclusion, we may think
%of $\mathbf Q_{\mathcal I}\restriction C$ as being indexed by $\omega_{n+1}$. 
To simplify notation let $Q_\xi = P_{a_\xi}/G_{a_\xi}$, for all $\xi < \omega_{n+1}$. Let us also write $p^\eta_\xi$ for the canonical projection from $Q_\eta$ to $Q_\xi$, for $\xi \leq \eta$. Therefore, we can identify the system $\mathbf Q_{\mathcal I} \! \restriction \! C$ with the system
\[
\mathbf Q= \{ Q_\xi; p^\eta_\xi \colon \xi \leq \eta < \omega_{n+1} \}. 
\]
Let $T$ be the tree $2^{< \omega_{n+1}}$ with the ordering being end-extension. 
% Let $\leq$ be the usual ordering on $T$, i.e., we let $s\leq t$ iff $s$ is an initial segment of $t$. 
For $s \in T$, we let $l(s)= \dom(s)$. We will define, for all $s\in T$, a sequence $\bar x^s$ such that: 
 
\begin{enumerate}
\item $\bar x^s \in \mathbf Q \restriction l(s)^{(n)}$ and is coherent
\item if $s\leq t$ then $\bar x^s$ is an initial segment of $\bar x^t$
\item\label{cond3phi} if ${\rm cof}(l (s))= \aleph_n$, and $\bar u\in \mathbf Q \restriction l(s)^{(n-1)}$ trivializes $\bar x^s$, then there is $\epsilon \in \{ 0, 1\}$ such that there is no extension of $\bar u$ in $\mathbf Q \restriction (l(s)+1)^{(n-1)}$ that trivializes $\bar x ^{s, \epsilon}$. 
\end{enumerate} 

We do this by induction on $l(s)$. If $l(s)$ is a limit ordinal and $\bar x ^t$ has been constructed, for all $t$ with $l(t) < l(s)$, we let $\bar x^s = \bigcup_{\alpha < l(s)} \bar x^{s \restriction \alpha}$. 

Consider now the successor case. Suppose $s\in T$ and let $\lambda = l(s)$. We have constructed the sequence $\bar x^s \in \mathbf Q \restriction \lambda^{(n)}$, and we need to extend it to two coherent sequences $\bar x^{s,\epsilon} \in \mathbf Q \restriction (\lambda +1)^{(n)}$, for $\epsilon \in \{ 0, 1\}$. For this we need to pick $x^{s,\epsilon}_{\bar \alpha, \lambda}\in Q_{\alpha_0}$, for all $\bar \alpha \in \lambda ^{(n-1)}$, and $\epsilon \in \{0,1\}$.
%This amounts to picking two sequence $\bar w^\epsilon\in \mathbf Q \restriction \lambda ^{(n-1)}$, for $\epsilon \in \{0,1\}$,
%and letting $x^{s,\epsilon}_{\bar \alpha, \lambda}= w^\epsilon_{\bar \alpha}$, for all $\bar \alpha \in \lambda^{(n-1)}$.
Given $\bar w \in \mathbf Q \restriction \lambda ^{(n-1)}$, we define the sequence $\bar x^s \ast_\lambda \bar w$ by letting, for $\bar \alpha = (\alpha_0, \ldots, \alpha_n) \in \lambda^{(n)}$,
\[
(\bar x^s \ast_\lambda \bar w)_{\bar \alpha}=
\begin{cases}
x^s_{\bar \alpha} & \text { if } \alpha_n\neq \lambda \\
w_{\alpha_0, \ldots, \alpha_{n-1}} & \text{ if } \alpha_n=\lambda.\end{cases}
\]
Observe that 
\[
\delta^{n+1}(\bar x^s \ast_\lambda \bar w) = \delta^{n+1}(\bar x^s)\ast_\lambda ( \delta^n(\bar w) + (-1)^n \bar x^s).
\]
%\medskip
\noindent
It follows that $\bar x^s \ast_\lambda \bar w$ is an coherent sequence in $\mathbf Q \restriction (\lambda +1)^{(n)}$ if and only if $(-1)^{n+1}\bar w$ trivializes $\bar x^s$. Since $\lambda$ is of cofinality $\leq \omega_n$, and the system $\mathbf Q$ is flasque, by Theorem~\ref{thm:flasquegoblot}, we can always find such a sequence $\bar w$. If $\cof(\lambda) < \aleph_n$ we let 
\[
\bar x^{s,0}= \bar x ^{s,1} = \bar x^s \ast_\lambda \bar w.
\]
 Let us now concentrate on the case $\cof(\lambda)= \aleph_n$. Observe that if two sequence $\bar w ^0$ and $\bar w ^1$ both trivialize $\bar x^s$ then $\bar w^0 - \bar w^1$ is a coherent sequence in $\mathbf Q \restriction \lambda ^{(n-1)}$. Conversely, if $\bar w$ trivializes $\bar x^s$ and $\bar z$ is a coherent sequence in $\mathbf Q \restriction \lambda ^{(n-1)}$ then $\bar w + \bar z$ also trivializes $\bar x^s$. By $\varphi_n (G)$ and the fact that $\cof(\lambda)=\aleph_n$, there is a nontrivial coherent sequence $\bar z \in \mathbf Q \restriction \lambda ^{(n-1)}$. We fix some $\bar w \in \mathbf Q \restriction \lambda ^{(n-1)}$ such that $(-1)^{n+1} \bar w$ trivializes $\bar x^s$, and let:
\[
\bar x^{s,0} = \bar x^s \ast_\lambda \bar w; \hspace{5mm} \bar x^{s,1} = \bar x^{s} \ast_\lambda (\bar w + \bar z). 
\] 
%\medskip

Let us check that the sequence $\bar x^{s,0}$ and $\bar x^{s,1}$ satisfy condition~\ref{cond3phi}. Consider first the case $n=1$. Suppose that $\bar u^\epsilon$ trivializes $x^{s,\epsilon}$, for $\epsilon \in \{ 0,1 \}$, and $\bar u^0 \restriction \lambda = \bar u^1 \restriction \lambda$. We would then have that $u^1_\lambda - u^0_\lambda$ trivializes $\bar z$, which would be a contradiction. 

Let us now suppose that $n>1$. Given $\bar u \in \mathbf Q \restriction (\lambda +1)^{(n-1)}$, we define a sequence $d_\lambda (\bar u) \in \mathbf Q \restriction \lambda^{(n-2)}$ by letting, for $\bar \alpha = (\alpha_0, \ldots, \alpha_{n-2}) \in \lambda^{(n-2)}$,
\[
d_\lambda (\bar u)_{\bar \alpha} = u_{\bar \alpha, \lambda}.
\]
Note that in any case we have $\bar u = \bar u \restriction \lambda^{(n-1)} \ast_{\lambda} d_\lambda (\bar u)$. Now observe also that:
\[
\delta^{n}(\bar u) = \delta^{n} (\bar u \restriction \lambda^{(n-1)})\ast_\lambda ( \delta^{n-1}(d_\lambda (\bar u)) + (-1)^{n-1}\bar u\restriction \lambda^{(n-1)}).
\]
Suppose now that $\bar u^\epsilon$ trivializes $\bar x^{s,\epsilon}$, for $\epsilon \in \{0,1\}$, and $\bar u^0 \restriction \lambda^{(n-1)} = \bar u^1 \restriction \lambda^{(n-1)}$. It follows that:
\[
 \delta^{n-1}(d_\lambda (\bar u^0)) + (-1)^{n-1}\bar u^0 \restriction \lambda^{(n-1)}= \bar w,
\]
and 
\[
 \delta^{n-1}(d_\lambda (\bar u^1)) + (-1)^{n-1}\bar u^1\restriction \lambda^{(n-1)} = \bar w + \bar z,
\]

%\medskip

\noindent 
Since $\bar u^0 \restriction \lambda^{(n-1)}= \bar u^1 \restriction \lambda^{(n-1)}$, we get that: 

\[
\delta^{n-1} (d_\lambda (\bar u^1- \bar u^0)) = \bar z,
\]
which contradicts our assumption that $\bar z$ is nontrivial. This completes the construction of the sequences $\bar x^s$, for $s \in T$. For $b \in 2^{\omega_{n+1}}$, we define:
\[
\bar x^b = \bigcup_{\lambda < \omega_{n+1}} \bar x ^{b \restriction \lambda }.
\]
Note that $\bar x^b \in \mathbf Q ^{(n)}$ and is coherent, for all $b$. We now use the principle ${\rm w}\diamondsuit(S^n_{n+1})$ to find $b \in 2^{\omega_{n+1}}$ such that $\bar x^b$ is nontrivial. Let
\[
D=\{\lambda < \omega_{n+1}\colon\omega_n\cdot\lambda=\lambda\}.
\]
Note that $D$ is a club in $\omega_{n+1}$. Suppose $s \in T$, let $\lambda = l(s)$, and suppose that $\lambda \in D$. We think of $s$ as coding a sequence $\bar u^s \in \mathbf Q \restriction \lambda^{(n-1)}$. The exact nature of the coding is not important, we only require that:
\begin{itemize}
\item if $\lambda \in D$ then every $\bar u \in \mathbf Q \restriction \lambda^{(n-1)}$ is coded by some $s\colon \lambda \to 2$,
\item if $s\leq t$ then $\bar u^s$ is an initial segment of $\bar u^t$. 
\end{itemize} 

\noindent If $b\colon \omega_{n+1} \to 2$, we let 
\[
\bar u^b = \bigcup_{\lambda \in D} \bar u^{b \restriction \lambda}.
\]
We now define a function $F\colon T \restriction D \to 2$ as follows. Given $s \in T \restriction D$, if $\bar u^s$ trivializes the sequence $\bar x^s$ we let $F(s) = \epsilon \in \{ 0,1\}$ be such that no extension of $\bar u^s$ in $\mathbf Q \restriction (l(s)+1)^{(n-1)}$ trivializes $\bar x^{s,\epsilon}$. Such an $\epsilon$ exists by \ref{cond3phi}. If $\bar u^s$ does not trivialize $\bar x^s$ we let $F(s)$ be an arbitrary element of $\{ 0, 1\}$. Now, by ${\rm w}\diamondsuit(S^n_{n+1})$ there is a function $g\colon \omega_{n+1} \to 2$ such that, for every $b\colon \omega_{n+1} \to 2$, the set 
\[
 \{ \lambda \in S^n_{n+1} \colon g(\lambda)= F( b \restriction \lambda) \} 
\]
is stationary. We claim that $\bar x^g$ is nontrivial. Indeed, suppose $\bar u$ trivializes $\bar x^g$ and find $b \in 2^{\omega_{n+1}}$ such that $\bar u ^b = \bar u$. Then by the guessing property of $g$ there is some $\lambda \in D$ such that $g(\lambda) = F(b\restriction \lambda)$. This means that $\bar u^{b \restriction \lambda}$ cannot be extended to a sequence in $\mathbf Q \restriction (\lambda +1)^{(n-1)}$ that trivializes $\bar x^g \restriction (\lambda +1)^{(n)}$, a contradiction. 
\end{proof} 

\begin{theorem}\label{A-nontrivial}
Suppose $\mathfrak b = \mathfrak d = \aleph_n$, and ${\rm w}\diamondsuit(S^k_{k+1})$ holds, for all $k < n$. 
Then $\lim^n \mathbf A\neq 0$. 
\end{theorem}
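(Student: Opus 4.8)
The strategy is to prove the auxiliary statement $\varphi_n(\mathbb Z)$ by induction on $n$ and then to apply it to the concrete ideal defining $\mathbf A$. The base case is free: Proposition~\ref{thm:hausgap} establishes $\varphi_1(\mathbb Z_2)$ in $\ZFC$, and the remark immediately following it gives $\varphi_1(G)$ for every nontrivial countable abelian group $G$; in particular $\varphi_1(\mathbb Z)$ holds. For the inductive step I would repeatedly invoke Lemma~\ref{stepping-up}: for each $m$ with $1 \leq m < n$, the hypothesis ${\rm w}\diamondsuit(S^m_{m+1})$ together with $\varphi_m(\mathbb Z)$ yields $\varphi_{m+1}(\mathbb Z)$, since $\mathbb Z$ is countable. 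Chaining these implications from $m=1$ up to $m=n-1$ produces $\varphi_n(\mathbb Z)$; every instance of weak diamond needed lies in the range $1 \leq m < n$ and is therefore supplied by the assumptions, while the instance ${\rm w}\diamondsuit(S^0_1)$ is simply not used.

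It then remains to convert $\varphi_n(\mathbb Z)$ into $\lim^n \mathbf A \neq 0$. Recall that $\mathbf A = \mathbf G_{\mathcal I_0}$, where $\mathcal I_0$ is the ideal on $\omega \times \omega$ generated by the sets $a_f = \{ (i,j) : j \leq f(i) \}$ for $f \in \omega^\omega$, with all groups $G_{(i,j)} = \mathbb Z$. A direct computation shows $a_f \subseteq_* a_g$ if and only if $f \leq^* g$, so the cofinal structure of $(\mathcal I_0, \subseteq_*)$ is that of $(\omega^\omega, \leq^*)$. The assumption $\mathfrak b = \mathfrak d = \aleph_n$ provides a scale, i.e.\ a $\leq^*$-increasing $\leq^*$-dominating sequence $\langle f_\xi : \xi < \omega_n \rangle$; by a standard construction (using $\mathfrak b = \aleph_n$ to handle limits of small cofinality) we may take it to be $<^*$-increasing. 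Then $\xi < \eta$ gives $f_\eta(i) > f_\xi(i)$ for infinitely many $i$, so $a_{f_\xi} \subset_* a_{f_\eta}$ and $\langle a_{f_\xi} : \xi < \omega_n \rangle$ is a genuine tower; since the scale dominates, this tower is cofinal in $(\mathcal I_0, \subseteq_*)$ and hence generates $\mathcal I_0$ modulo the Fr\'echet ideal. Identifying the countable set $\omega \times \omega$ with $\omega$ by a bijection, $\mathcal I_0$ becomes an ideal on $\omega$ generated by a tower of length $\omega_n$, so $\varphi_n(\mathbb Z)$ applies directly and yields $\lim^n \mathbf G_{\mathcal I_0} = \lim^n \mathbf A \neq 0$.

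The substantive work has already been done in Lemma~\ref{stepping-up} and Proposition~\ref{thm:hausgap}; what remains is bookkeeping. The one point needing genuine care is that the dominating family extracted from $\mathfrak b = \mathfrak d = \aleph_n$ must be arranged as a strict $\subseteq_*$-tower, not merely a $\subseteq_*$-increasing cofinal chain: here the dominating number fixes the length as $\aleph_n$, while the bounding number being equal to it is what guarantees a $<^*$-increasing (rather than merely cofinal) well-ordered dominating sequence of that length exists. One should also confirm that transporting $\mathbf A$ along the bijection $\omega \times \omega \cong \omega$ produces precisely a system of the form $\mathbf G_{\mathcal I}$ covered by $\varphi_n(\mathbb Z)$, which is immediate because every $G_{(i,j)}$ equals $\mathbb Z$.
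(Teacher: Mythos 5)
Your proposal is correct and follows essentially the same route as the paper: establish $\varphi_1(\mathbb Z)$, iterate Lemma~\ref{stepping-up} using the hypotheses ${\rm w}\diamondsuit(S^k_{k+1})$ for $1\leq k<n$, and observe that $\mathfrak b=\mathfrak d=\aleph_n$ makes $\mathcal I_0$ tower-generated of length $\omega_n$ so that $\varphi_n(\mathbb Z)$ applies. The only (immaterial) divergence is that you derive $\varphi_1(\mathbb Z)$ from Proposition~\ref{thm:hausgap} and the transfer remark following it, while the paper's proof cites Proposition~\ref{ideals-Z} directly; your write-up actually supplies more detail than the paper on the scale-to-tower step.
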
 
\begin{proof}
Recall that the system $\mathbf A$ corresponds to $\mathbf G_{\mathcal I_0}$, where $\mathcal I_0$ is the ideal on $\omega \times \omega$ generated by sets of the form:
\[
a_f= \{ (i,j) \in \omega \times \omega \colon j \leq f(i) \},
\]
for all $f \in \omega^\omega$, and $G_{m,n}= \mathbb Z$, for all $(n,m)\in \omega^2$. The ideal $\mathcal I_0$ is generated by a tower of length $\omega_n$ precisely when $\mathfrak b = \mathfrak d = \aleph_n$. In this case, we have that $\phi_n(\mathbb Z)$ suffices to get $\lim^n \mathbf A\neq 0$. But by Proposition \ref{ideals-Z}, $\phi_1(\mathbb Z)$ holds, and by Lemma \ref{stepping-up} and our set theoretic hypotheses, so does $\phi_n(\mathbb Z)$.
\end{proof}

\section{The consistency result}\label{S.Cons}

We now turn to the consistency of the combination of combinatorial principles we need for our main result. 

\begin{definition} Given a regular cardinal $\kappa$ and a set $I$, we let ${\rm Fn}(I,2,\kappa)$ denote the poset of partial functions $p$ such that ${\rm dom}(p)\in [I]^{<\kappa}$ and $p\colon {\rm dom}(p)\to 2$. The order is reverse inclusion. 
\end{definition} 

 It is well known that ${\rm Fn}(I,2,\kappa)$ is $\kappa$-closed and has the $(2^{<\kappa})^+$-chain condition. 
 Thus, if $2^{<\kappa}= \kappa$ then forcing with ${\rm Fn}(I,2,\kappa)$ preserves cardinals. 
If $\lambda$ is a cardinal and $I = \lambda \times \kappa$, then ${\rm Fn}(I,2,\kappa)$ is the usual poset for adding
$\lambda$ many Cohen subsets of $\kappa$. If ${\rm GCH}$ holds in $V$ and ${\rm cof}(\lambda) >\kappa$,
then in the generic extension $V[G]$ by ${\rm Fn}(\lambda \times \kappa, 2, \kappa)$ we have $2^\kappa = \lambda$ (see \cite[IV.7]{Kunen.2011}).
The following lemma is fairly standard. We include a proof for completeness. 

\begin{lemma}\label{exhaustion} Let $\kappa$ be an uncountable regular cardinal. Suppose $\mathbb P$ is a $\kappa$-cc poset and $\mathbb Q$ is $\kappa$-closed. 
Let $(p,q)\in \mathbb P \times \mathbb Q$ and let $\tau$ be a $\mathbb P \times \mathbb Q$-name for an ordinal. 
Then there is a condition $r \leq q$ and a $\mathbb P$-name for an ordinal $\sigma$ such that $(p,r) \forces \tau = \sigma$.
\end{lemma}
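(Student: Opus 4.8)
The plan is to show that a single ``reading off'' step suffices here, with no need for a fusion of length $\kappa$. The two hypotheses enter in complementary ways: the $\kappa$-cc of $\mathbb{P}$ will bound the size of a certain antichain, while the $\kappa$-closure of $\mathbb{Q}$ will let me find one condition $r$ lying below the $<\kappa$ many auxiliary conditions that this antichain produces.

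First I would consider the set $E$ of all $a\le p$ in $\mathbb{P}$ for which there is some $r'\le q$ in $\mathbb{Q}$ such that $(a,r')$ decides $\tau$, i.e. $(a,r')\forces \tau=\check{\gamma}$ for some ordinal $\gamma$. For any $b\le p$ the condition $(b,q)$ has an extension $(a,r')$ deciding the value of $\tau$, with $a\le b$ and $r'\le q$, so $a\in E$; hence $E$ is dense below $p$. Let $A\subseteq E$ be a maximal antichain of $E$. Because $E$ is dense below $p$, the antichain $A$ is predense below $p$ in $\mathbb{P}$. By the $\kappa$-cc of $\mathbb{P}$ we have $|A|<\kappa$, and for each $a\in A$ we may fix $r_a\le q$ and an ordinal $\gamma_a$ with $(a,r_a)\forces \tau=\check{\gamma_a}$.

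Next, using that $\mathbb{Q}$ is $\kappa$-closed and $|A|<\kappa$, I would choose $r\in\mathbb{Q}$ with $r\le r_a$ for all $a\in A$; then automatically $r\le q$, and $(a,r)\forces \tau=\check{\gamma_a}$ for every $a\in A$. I then define the $\mathbb{P}$-name $\sigma=\{(\check{\gamma_a},a):a\in A\}$. Since $A$ is a predense antichain below $p$, the condition $p$ forces that exactly one $a\in A$ belongs to the $\mathbb{P}$-generic, so $p\forces_{\mathbb{P}}\sigma\in\mathrm{Ord}$ and $\sigma$ is a legitimate name for an ordinal.

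Finally I would verify $(p,r)\forces_{\mathbb{P}\times\mathbb{Q}}\tau=\sigma$. Given a generic $G\times H$ with $(p,r)\in G\times H$, predensity of $A$ below $p$ together with $p\in G$ yields a unique $a\in A\cap G$, for which $\sigma^{G}=\gamma_a$; and since $(a,r)\in G\times H$ forces $\tau=\check{\gamma_a}$, we obtain $\tau^{G\times H}=\gamma_a=\sigma^{G}$. The only point requiring care is the bookkeeping that $A$, chosen maximal inside the dense set $E$, is genuinely predense below $p$, so that $\sigma$ is well defined; I do not expect a real obstacle, since the product structure lets the contributions of $\mathbb{P}$ (via $\kappa$-cc) and of $\mathbb{Q}$ (via $\kappa$-closure) be separated cleanly.
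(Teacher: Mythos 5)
There is a genuine gap at the step where you invoke $\kappa$-closure. You choose, for each $a$ in the antichain $A$, a condition $r_a\le q$ with $(a,r_a)\forces\tau=\check{\gamma}_a$, and then ask for a single $r\le r_a$ for all $a\in A$. But $\kappa$-closure of $\mathbb Q$ only provides lower bounds for \emph{decreasing sequences} of length $<\kappa$ (or, for $\kappa$-directed-closure, for directed families); the family $\{r_a\colon a\in A\}$ is neither. Since the $r_a$ are chosen independently below $q$, two of them may be incompatible, and then no common $r$ exists. This is not a hypothetical worry: in the poset $\mathbb Q={\rm Fn}(\mu\times\kappa,2,\kappa)$, to which this lemma is applied in the paper, two extensions of $q$ that disagree on a coordinate outside $\dom(q)$ have no common lower bound. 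So the proof breaks exactly at the point where the two hypotheses were supposed to ``separate cleanly.''

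The paper avoids this by interleaving the two constructions: it builds the antichain $(p_\xi\colon\xi<\delta)$ in $\mathbb P$ \emph{and} a decreasing sequence $(q_\xi\colon\xi<\delta)$ in $\mathbb Q$ by a single recursion, requiring at stage $\gamma$ that the new witness $q^*$ lie below \emph{all} previously chosen $q_\xi$. The $\kappa$-cc stops the recursion before $\kappa$, and $\kappa$-closure then applies to the genuinely decreasing sequence $(q_\xi)$ to produce $r$. Note that your ``antichain first, $\mathbb Q$-conditions afterwards'' order is not easily repaired by simply choosing the $r_a$ recursively so as to be decreasing: given $a\in A$ and a condition $r'$ below the previously chosen ones, there need not be any $r''\le r'$ such that $(a,r'')$ decides $\tau$ without further extending $a$, so the antichain itself would have to be refined adaptively --- which is precisely the paper's construction. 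The parts of your argument surrounding this step (density of $E$, predensity of $A$ below $p$, the definition and evaluation of the name $\sigma$) are fine and match the paper's.
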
 

\begin{proof} 
We build inductively an antichain $(p_\xi \colon \xi < \delta)$ of conditions in $\mathbb P$ below $p$, a sequence of ordinals $(\alpha_\xi \colon \xi < \delta)$
and a decreasing sequence of conditions $(q_\xi \colon \xi < \delta)$ in $\mathbb Q$ below $q$. 
%The ordinal $\delta$ will be the stage at which the construction stops. We will show that $\delta < \kappa$. 
At stage $\gamma <\kappa$ we have built $(p_\eta \colon \xi <\gamma)$, $(\alpha_\xi \colon \xi < \gamma)$, and $(q_\xi \colon \xi <\gamma)$. 
%Since $\mathbb Q$ is $\kappa$-closed we can first find a lower bound $q'$ for the sequence $\{ q_\xi \colon \xi < \gamma\}$. 
We ask if there is a condition $p^* \leq p$ which is incompatible with all the $p_\xi$, for $\xi <\gamma$, an ordinal $\alpha^*$
and $q^*\leq q_\xi$, for all $\xi < \gamma$, such that $(p^*,q^*)\forces \tau = \check{\alpha}$. 
If so, we pick such witnesses $p^*$, $q^*$, and $\alpha^*$, and we let $p_\gamma = p^*$, $q_\gamma= q^*$, and $\alpha_\gamma = \alpha^*$. 
Since $\mathbb P$ is $\kappa$-cc the construction cannot continue for $\kappa$ many steps. 
Let $\delta$ be the stage at which it stops. 
Since $\mathbb Q$ is $\kappa$-closed we can find $r\in \mathbb Q$ such that $r\leq q_\xi$, for all $\xi <\delta$.
We let $\sigma = \{ (p_\xi, \check{\alpha}_\xi) \colon \xi < \delta\}$. 
Then $r$ and $\sigma$ are as required. 
\end{proof} 

\begin{lemma}\label{weak diamond}
Let $\kappa$ be an uncountable regular cardinal, and let $\lambda$ and $\mu$ be cardinals such that $\lambda^{<\kappa} < \mu$. Suppose $\mathbb P$ is a $\kappa$-cc poset of size $\lambda$. Let $\mathbb Q = {\rm Fn}(\mu \times \kappa, 2, \kappa)$. Then, for every stationary subset $S$ of $\kappa$ in $V$, ${\rm w}\diamondsuit(S)$ holds in the generic extension by $\mathbb P \times \mathbb Q$. 
\end{lemma}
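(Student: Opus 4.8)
The plan is to reduce everything to the statement that a single Cohen-generic function guesses any \emph{ground-model} coloring, and then to exploit the smallness of $\mathbb P$ to place a hypothetical counterexample off one of the $\mu$ Cohen coordinates. Write $\mathbb Q = {\rm Fn}(\mu\times\kappa,2,\kappa)$ as adding functions $g_\beta\colon\kappa\to 2$, $g_\beta(\alpha)=H(\beta,\alpha)$, for $\beta<\mu$. First I would record the routine preservation facts: $\mathbb P\times\mathbb Q$ preserves cardinals and cofinalities (as $\mathbb P$ is $\kappa$-cc and $\mathbb Q$ is $\kappa$-closed), and $S$ remains stationary in the extension and in every intermediate model obtained by omitting coordinates of $\mathbb Q$.

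The heart of the argument is the following claim: if $W$ is any model, $F\colon 2^{<\kappa}\to 2$ lies in $W$, $S$ is stationary in $W$, and $g$ is ${\rm Fn}(\kappa,2,\kappa)$-generic over $W$, then in $W[g]$ the function $g$ guesses $F$ on $S$, i.e. for every $f\colon\kappa\to 2$ in $W[g]$ the set $\{\alpha\in S\colon g(\alpha)=F(f\restriction\alpha)\}$ is stationary. To prove this, suppose a condition $q$ forced the existence of a club $\dot C$ and a function $\dot f$ with $g(\alpha)\neq F(\dot f\restriction\alpha)$ for all $\alpha\in S\cap\dot C$. Working in $W$ and using that ${\rm Fn}(\kappa,2,\kappa)$ is $\kappa$-closed (hence adds no bounded subsets of $\kappa$), I would build a continuous increasing sequence $(\delta_\eta)_{\eta<\kappa}$ and a decreasing sequence of conditions $(q_\eta)_{\eta<\kappa}$ below $q$ so that $\dom q_\eta\subseteq\delta_\eta$ while $q_\eta$ decides $\dot f\restriction\delta_\eta$ and $\dot C\cap\delta_\eta$, arranging at successor steps that $\dot C$ is forced to meet each interval $[\delta_\eta,\delta_{\eta+1})$. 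The closure points $\delta_\eta$ form a club of $W$, so by stationarity there is a limit $\eta$ with $\alpha:=\delta_\eta\in S$; for this $\eta$ we have $q_\eta\forces\alpha\in\dot C$, $q_\eta$ decides $\dot f\restriction\alpha=\sigma$ for some $\sigma\in W$, and crucially $\alpha\notin\dom q_\eta$. Then $q_\eta\cup\{(\alpha,F(\sigma))\}$ forces $g(\alpha)=F(\dot f\restriction\alpha)$ with $\alpha\in S\cap\dot C$, contradicting the choice of $q$.

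It remains to feed the ambient forcing into this claim. Suppose towards a contradiction that ${\rm w}\diamondsuit(S)$ failed in $V[G\times H]$, and fix a witnessing $F\colon 2^{<\kappa}\to 2$. Since $\mathbb Q$ is $\kappa$-closed it adds no new bounded subsets of $\kappa$, so the domain $2^{<\kappa}$ is already computed in $V[G]$; moreover, by the exhaustion argument behind Lemma~\ref{exhaustion}, every bounded subset of $\kappa$ in the extension has a $\mathbb P$-name, and counting nice $\mathbb P$-names gives $2^{<\kappa}\leq\lambda^{<\kappa}$ in $V[G]$ and hence in $V[G\times H]$. Consequently $F$ is coded by a subset of a $V[G]$-set of size at most $\lambda^{<\kappa}$, and a nice $\mathbb Q$-name for this code, using the $(2^{<\kappa})^+$-chain condition of $\mathbb Q$ (so antichains have size $\leq\lambda^{<\kappa}$) together with the $<\kappa$-sized supports of conditions, involves at most $\lambda^{<\kappa}<\mu$ coordinates of $\mu\times\kappa$. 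Choosing $\beta^*<\mu$ outside this set of coordinates, $F$ lies in $W:=V[G\times H^{\beta^*}]$, where $H^{\beta^*}$ omits coordinate $\beta^*$; by mutual genericity $g_{\beta^*}$ is ${\rm Fn}(\kappa,2,\kappa)$-generic over $W$, and $S$ is stationary in $W$. The claim then says $g_{\beta^*}$ guesses $F$ on $S$, contradicting the assumption that $F$ witnesses the failure of ${\rm w}\diamondsuit(S)$.

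I expect the main obstacle to be the genericity construction in the claim, namely simultaneously deciding $\dot f\restriction\alpha$ and forcing $\alpha\in\dot C$ at a point $\alpha\in S$ while keeping $\alpha$ out of the domain of the condition, so that $g(\alpha)$ remains free to be set equal to $F(\dot f\restriction\alpha)$. The bookkeeping that the closure points $\delta_\eta$ form a club lying in $W$ (so that stationarity of $S$ applies) and that $\dot C$ is forced to pass through the chosen $\alpha$ is the delicate part; the coordinate-counting, powered by the bound $2^{<\kappa}\leq\lambda^{<\kappa}<\mu$ coming from Lemma~\ref{exhaustion}, is then routine.
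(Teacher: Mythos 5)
Your proposal is correct, but it is organized differently from the paper's argument, and the comparison is instructive. The paper never passes to an intermediate model: it works throughout in $V$ with the name $\dot F$, uses Lemma~\ref{exhaustion} repeatedly to trade $\mathbb P\times\mathbb Q$-names for ordinals against $\mathbb P$-names below extended $\mathbb Q$-conditions, finds a set $A\subseteq\mu$ of size $\lambda^{<\kappa}$ so that $\dot F(\tau)$ is a $\mathbb P\times(\mathbb Q\restriction A)$-name for every canonical $\mathbb P$-name $\tau$, and then runs a single interleaved density argument of length $\kappa$ in which the $\mathbb P$-coordinate is also extended at the final step to decide $\dot F(\dot f_\xi)$ without touching the reserved coordinate $\delta\notin A$. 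You instead absorb $\mathbb P$ and all but one Cohen coordinate into an intermediate model $W$ containing $F$ itself, and reduce to the classical fact that a single $\mathrm{Fn}(\kappa,2,\kappa)$-generic guesses any coloring lying in the model over which it is generic. Both proofs hinge on exactly the same counting ($\lambda^{<\kappa}<\mu$ coordinates suffice to capture $F$, resp.\ the relevant values of $\dot F$) and the same club-of-closure-points density argument, so the mathematical content is the same; what your route buys is conceptual clarity (the guessing is literally mutual genericity of one Cohen coordinate over $W$), at the price of having to verify the preservation facts in $W$ that you label ``routine'': that $\kappa$ remains regular and, more to the point, that $S$ remains stationary in $W=V[G\times H^{\beta^*}]$. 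This is where you should invoke Easton's lemma explicitly ($\mathbb P$ stays $\kappa$-cc after the $\kappa$-closed part, and $\kappa$-closed forcing preserves stationary subsets of $\kappa$), and likewise the computation $(2^{<\kappa})^{V[G\times H]}=(2^{<\kappa})^{V[G]}$ rests on the $<\kappa$-distributivity of $\mathbb Q$ over $V[G]$; the paper avoids all of this by running its construction entirely over $V$, where $S$ is stationary by hypothesis. One small slip worth fixing: the code for $F$ needs a nice $\mathbb P\times\mathbb Q$-name (the product is $(\lambda^{<\kappa})^+$-cc since $|\mathbb P|=\lambda$), not merely a $\mathbb Q$-name, though the coordinate count comes out the same.
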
 

\begin{proof} 
We may assume that $\lambda\geq 2$, since otherwise forcing with $\mathbb P \times \mathbb Q$ is equivalent to forcing with $\mathbb Q$ and it is well-known that $\mathbb Q$ adds a regular $\diamondsuit(S)$-sequence. Let $\dot{F}$ be a $\mathbb P \times \mathbb Q$-name and suppose without loss of generality that the maximal condition in $\mathbb P \times \mathbb Q$ forces that $\dot{F}$ is a function from $2^{<\kappa}$ to $2$. Since $\mathbb P$ has the $\kappa$-cc and cardinality $\lambda$ there are at most $\lambda ^{<\kappa}$ canonical $\mathbb P$-names for elements of $2^{<\kappa}$. Since $\mathbb P \times \mathbb Q$ has the $2(^{<\kappa})^+$-cc we can find a subset $A$ of $\mu$ of size $\lambda^{<\kappa}$ such that, letting $\mathbb Q \restriction A$ denote the poset ${\rm Fn}(A \times \kappa, 2,\kappa)$, for every canonical $\mathbb P$-name $\tau$ for an element of $2^{<\kappa}$, $\dot{F}(\tau)$ is a $\mathbb P \times (\mathbb Q \restriction A)$-name. Since the cardinality of $A$ is less than $\mu$, we can find $\delta \in \mu \setminus A$. Let $\dot{g}_\delta$ be the canonical $\mathbb Q$-name for the $\alpha$-th Cohen subset of $\kappa$ added by $\mathbb Q$. We now claim that $\dot{g}_\delta$ is forced by the maximal condition in $\mathbb P \times \mathbb Q$ to be a guessing function for $\dot{F}$. 

So, fix a condition $(p,q) \in \mathbb P \times \mathbb Q$, a $\mathbb P\times \mathbb Q$ name $\dot{f}$ for a function from $\kappa$ to $2$, and a $\mathbb P\times \mathbb Q$ name $\dot{C}$ for a club in $\kappa$. We need to find some $(p^*,q^*) \leq (p,q)$ and $\alpha \in S$ such that 
\[
(p^*,q^*) \forces \check{\alpha}\in\dot{C} \text{ and }\dot{F}(\dot{f} \restriction \check{\alpha})=\dot{g}_\delta(\check{\alpha}).
\]
 We now proceed as in the standard proof that adding a Cohen subset of $\kappa$ adds a $\diamondsuit(S)$-sequence. To do this, we build an increasing continuous sequence $(\alpha_\xi \colon \xi<\kappa)$ of ordinals less than $\kappa$, a decreasing sequence of conditions $(q_\xi\colon \xi <\kappa)$ in $\mathbb Q$ below $q$, and two other sequences $(\sigma_\xi\colon \xi <\kappa)$ and $(\dot{f}_\xi \colon \xi <\kappa)$ of canonical $\mathbb P$-names such that: 

\begin{itemize} 
\item $\sigma_\xi$ is a $\mathbb P$-name for an ordinal $ < \kappa$ and is forced by $(p,q_{\xi+1})$ to be the next element of $\dot{C}$ above $\check{\alpha}_\xi$. Since $\mathbb P$ has the $\kappa$-cc we have $<\kappa$ many possible values for $\sigma_\xi$.
\item $\dot{f}_\xi$ is a $\mathbb P$-name for a a function from $\alpha_\xi$ to $2$ and $(p,q_{\xi+1}) \forces \dot{f} \restriction \check{\alpha}_\xi =\dot{f}_\xi$.
\end{itemize}

We can do this by the Lemma \ref{exhaustion}. Suppose we have done the construction up to $\xi$. We need to define the $\xi$-th elements of the above sequences. Consider first the successor case, i.e., $\xi= \eta +1$, for some $\eta$.
%We first ask the question what is the next point of $\dot{C}$ above \alpha_\eta$. 
By Lemma \ref{exhaustion} we can extend $q_\eta$ to $q'$ and find a canonical $\mathbb P$-name for an ordinal $\sigma_\xi$ such that $(p, q')$ forces that $\sigma_\xi$ is the next point of $\dot{C}$ above $\alpha_\eta$. Since $\mathbb P$ is $\kappa$-cc there are $<\kappa$ many possible values of $\sigma_\xi$. We then pick some $\alpha_{\xi}$ above all these values. Moreover, we ensure that $\alpha_\xi$ is at least $\sup \{ \alpha \colon (\delta,\alpha) \in \dom(q_\eta)\}$. Now, by Lemma \ref{exhaustion} again, we extend $q'$ to some $q_\xi$, and find a canonical $\mathbb P$-name $\dot{f}_\xi$ such that $(p,q_\xi)$ forces that $\dot{f}\restriction \check{\alpha}_\xi$ equals $\dot{f}_\xi$. At a limit stage $\xi$ we let 
\[
\alpha_\xi = \sup \{ \alpha_\eta \colon \eta < \xi \}, \,\, q_\xi = \bigcup \{ q_\eta \colon \eta < \xi \}\text{, and }\dot{f}_\xi = \bigcup \{ \dot{f}_\eta \colon \eta < \xi \}.
\]
 We also let $\sigma_\xi = \check{\alpha}_\xi$. Since $\dot{C}$ is forced to be a club in $\kappa$, we will have that $(p, q_\xi) \forces \check{\alpha}_\xi \in \dot{C}$. We continue this construction for $\kappa$ many steps. The set $\{ \alpha_\xi \colon \xi < \kappa\}$ will be a club in $\kappa$ and, since $S$ is stationary, we are going to reach a limit stage $\xi$ such that $\xi \in S$. Note that $(p, q_\xi)$ forces that $\dot{f} \! \restriction \! \check{\alpha}_\xi$ is equal to $\dot{f}_\xi$, which is a $\mathbb P$-name. Now, we can extend $(p,q_\xi)$ to some condition $(p^*,q')$ which decides the value of $\dot{F} (\dot{f}_\xi)$. Since $\dot{F}(\dot{f}_\xi)$ is a $\mathbb P \times (\mathbb Q \restriction A)$-name and $\delta \notin A$ we can do this without extending the $\delta$-th component of $q_\xi$. Note also that $\sup\{\alpha\colon(\delta,\alpha) \in \dom(q_\xi)\} \leq \alpha_\xi$. This means that $(p^*,q')$ decides the value of $\dot{F}(\dot{f}\! \restriction \! \check{\alpha}_\xi)$, but we have not yet made a commitment on the value of $\dot{g}_\delta$ at $\alpha_\xi$, i.e., $(\delta,\alpha_\xi)\notin \dom(q')$. So if this decision is equal to some $i \in \{ 0, 1\}$ we can now extend the $\delta$-th component of $q'$ by putting value putting this value $i$ at $\alpha_\xi$; in other words, we let $q^* = q' \cup \{ ((\delta, \alpha_\xi),i)\}$. Putting everything together, we have found a condition $(p^*,q^*)$ extending $(p,q)$ which forces $\check{\alpha}_\xi$ is in $\dot{C}$ and $\dot{g}_\delta(\alpha_\xi)$ equals to $\dot{F}(\dot{f} \! \restriction \! \check{\alpha}_\xi)$. This is what we needed to prove. 
\end{proof}

We recall the following well-known forcing notion $\mathbb H$ due to Hechler, \cite{Hechler}, but see \cite[IV.4.11]{Kunen.2011}.

\begin{definition}\label{Hechler} 
We say $p\in \mathbb H$ if $p =(s_p,f_p)$, where $s_p$ is a finite partial function from $\omega$ to $\omega$, and $f_p \in \omega^\omega$. We say that $q\leq p$ if $s_p \subseteq s_q$, and $s_q(n) \geq f_p(n)$, for all $n\in \dom(s_q) \setminus \dom(s_p)$. 
\end{definition} 

The forcing notion $\mathbb H$ is c.c.c., in fact, $\sigma$-centered, and adds a function $g\colon \omega \to \omega$ which dominates modulo finite sets all ground model functions in $\omega^\omega$. Given a cardinal $\kappa$ we let $\mathbb H_\kappa$ denote the finite support iteration of $\mathbb H$ of length $\kappa$. It is a standard fact that if $\kappa$ is uncountable then forcing with $\mathbb H_\kappa$ makes $\mathfrak b = \mathfrak d =\kappa$. 

Let us also recall that for an integer $k$, $S^k_{k+1}$ denotes the set $\{ \alpha < \omega_{k+1}\colon {\rm cof}(\alpha)= \omega_k\}$. 

\begin{theorem}\label{consistency} 
Let $n\geq 1$. Then it is relatively consistent with ${\rm ZFC}$ that $\mathfrak b= \mathfrak d = \aleph_n$, and ${\rm w}\diamondsuit(S^k_{k+1})$ holds, for all $k <n$. 
\end{theorem}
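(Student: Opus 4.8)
The plan is to start from a model $V$ of $\mathrm{GCH}$ (so the statement is a relative consistency result) and force with a single product. The bottom factor is the Hechler iteration $\mathbb{H}_{\omega_n}$ of Definition~\ref{Hechler}, which gives $\mathfrak{b}=\mathfrak{d}=\aleph_n$; to this I adjoin, for each $k<n$, a poset $\mathbb{Q}_k=\mathrm{Fn}(\mu_k\times\omega_{k+1},2,\omega_{k+1})$ adding Cohen subsets of $\omega_{k+1}$, the purpose of $\mathbb{Q}_k$ being to let me apply Lemma~\ref{weak diamond} at the cardinal $\kappa=\omega_{k+1}$ and obtain ${\rm w}\diamondsuit(S^k_{k+1})$. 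Concretely I would take $\mu_k=\aleph_{n+k+1}$ (regular, with $\mathrm{cof}(\mu_k)>\omega_{k+1}$) and set $\mathbb{R}=\mathbb{H}_{\omega_n}\times\prod_{k<n}\mathbb{Q}_k$. Each $\mathbb{Q}_k$ is $\omega_{k+1}$-closed, hence $\omega_1$-closed, so $\prod_{k<n}\mathbb{Q}_k$ adds no new reals; consequently $\omega^\omega$ is unchanged after $\mathbb{H}_{\omega_n}$ and $\mathfrak{b}=\mathfrak{d}=\aleph_n$ persists in $V[\mathbb{R}]$. A standard Easton-style factorisation of $\mathbb{R}$ at each cardinal $\theta$ into a $\theta$-cc part and a $\theta$-closed part shows $\mathbb{R}$ preserves all cardinals and cofinalities, so that $\aleph_n$ is not collapsed and $\mathfrak{d}=\aleph_n$ is meaningful.

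The core of the argument is, for each fixed $k<n$, the factorisation $\mathbb{R}=\mathbb{A}_k\times\mathbb{Q}_k\times\mathbb{B}_k$, where $\mathbb{A}_k=\mathbb{H}_{\omega_n}\times\prod_{j<k}\mathbb{Q}_j$ and $\mathbb{B}_k=\prod_{j>k}\mathbb{Q}_j$. First I would verify that $\mathbb{A}_k$ is $\omega_{k+1}$-cc: a $\Delta$-system argument (legitimate under $\mathrm{GCH}$, since each condition in $\prod_{j<k}\mathbb{Q}_j$ has support of size $\le\aleph_{k-1}$ and there are at most $\aleph_k$ patterns on a root) shows $\prod_{j<k}\mathbb{Q}_j$ is $\aleph_{k+1}$-Knaster, while $\mathbb{H}_{\omega_n}$ is ccc, hence $\aleph_{k+1}$-cc; since the product of a $\kappa$-Knaster poset and a $\kappa$-cc poset is $\kappa$-cc, $\mathbb{A}_k$ is $\omega_{k+1}$-cc. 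One computes $\lambda_k:=|\mathbb{A}_k|$ and $\lambda_k^{<\omega_{k+1}}=\mu_{k-1}<\mu_k$ for $k\ge 1$, and $\lambda_0^{<\omega_1}=\aleph_n<\mu_0$, so the arithmetic hypothesis of Lemma~\ref{weak diamond} holds. Finally $\mathbb{B}_k$, being a finite product of $\omega_{j+1}$-closed posets with $j>k$, is $\omega_{k+2}$-closed.

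The key device is to perform $\mathbb{B}_k$ first and apply Lemma~\ref{weak diamond} in the intermediate model $V[\mathbb{B}_k]$ rather than in $V$. Because $\mathbb{B}_k$ is $\omega_{k+2}$-closed it adds no new subsets of $\omega_{k+1}$ of size $\le\omega_{k+1}$; hence in $V[\mathbb{B}_k]$ the cardinal $\omega_{k+1}$ is still regular, the set $S^k_{k+1}$ is unchanged and still stationary (no new clubs of $\omega_{k+1}$ appear), $\mathbb{A}_k$ is still $\omega_{k+1}$-cc of size $\lambda_k$ with $\lambda_k^{<\omega_{k+1}}<\mu_k$ (the relevant bounded sequences and cardinals are unchanged), and $\mathbb{Q}_k$ is literally the same poset $\mathrm{Fn}(\mu_k\times\omega_{k+1},2,\omega_{k+1})$. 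Applying Lemma~\ref{weak diamond} over $V[\mathbb{B}_k]$ with $\mathbb{P}=\mathbb{A}_k$ and $\mathbb{Q}=\mathbb{Q}_k$ then yields ${\rm w}\diamondsuit(S^k_{k+1})$ in $V[\mathbb{B}_k][\mathbb{A}_k\times\mathbb{Q}_k]=V[\mathbb{R}]$, as products commute. Since $k<n$ was arbitrary, all the required weak diamonds hold in $V[\mathbb{R}]$, completing the proof.

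The main obstacle is exactly this chain-condition mismatch across levels: the Cohen forcings $\mathbb{Q}_j$ with $j>k$ are very far from being $\omega_{k+1}$-cc, so they cannot be absorbed into the factor $\mathbb{P}$ required by Lemma~\ref{weak diamond}, yet they are needed to blow up $2^{\aleph_{j+1}}$ at the higher levels. Pushing them to the front as the highly closed forcing $\mathbb{B}_k$ and guessing in $V[\mathbb{B}_k]$ sidesteps the difficulty, and it is precisely the $\omega_{k+2}$-closure of $\mathbb{B}_k$ that keeps $S^k_{k+1}$ stationary and $\mathbb{A}_k$ chain-condition–correct in the intermediate model. The remaining work is bookkeeping: choosing the $\mu_k$ so that $\lambda_k^{<\omega_{k+1}}<\mu_k$ holds simultaneously for every $k$ and so that the global cardinal arithmetic (which yields $2^{\aleph_0}=\aleph_n<2^{\aleph_1}<\dots<2^{\aleph_n}$) is coherent and cardinal-preserving.
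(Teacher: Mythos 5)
Your proposal is correct and follows essentially the same route as the paper: the same product $\mathbb H_{\omega_n}\times\prod_k \mathrm{Fn}(\omega_{n+k+1}\times\omega_{k+1},2,\omega_{k+1})$ over a model of GCH, the same factorisation for each $k$ into a $\omega_{k+1}$-cc part, the Cohen forcing at $\omega_{k+1}$, and a $\omega_{k+2}$-closed remainder forced first, with Lemma~\ref{weak diamond} applied in the intermediate model. The only difference is that you spell out the $\Delta$-system/Knaster verification of the chain condition and the preservation of stationarity, which the paper leaves implicit.
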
 

\begin{proof} 
We start with a model $V$ satisfying ${\rm GCH}$. For $1\leq k\leq n$, let $\mathbb C_k = {\rm Fn}(\omega_{n+k}\times \omega_k, 2, \omega_k)$ and $\mathbb C = \mathbb H_{\omega_n} \times \mathbb C_1 \cdots \times \mathbb C_n$ be the product forcing notion. It is a standard fact (see \cite[V.2]{Kunen.2011}) that forcing with $\mathbb C$ preserves cofinalities. Let $G= G_0\times \cdots \times G_n$ be a generic over $\mathbb C$. Then in $V[G]$ we have $2^{\aleph_k}= \aleph_{n+k}$, for all $k\leq n$. By the product lemma and the fact that $\mathbb C_1 \times \ldots \times \mathbb C_n$ does not add reals, we can view the final model $V[G_0 \times G_1 \times \ldots \times G_n]$ as a generic extension over $V[G_1 \times \ldots \times G_n]$ by $\mathbb H_{\omega_n}$ as defined in that model. Therefore the final model satisfies $\mathfrak b = \mathfrak d = \aleph_n$. We need to check that ${\rm w}\diamondsuit(S^k_{k+1})$ holds in $V[G]$, for all $k <n$. Fix $k <n$. The forcing notion $\mathbb C_{k+2}\times \cdots \times \mathbb C_n$ is $\omega_{k +2}$-closed. Hence, if we let $W=V[G_{k+2}\times \cdots \times G_n]$, we have $2^{\omega_k}=\omega_{k+1}$ in $W$. Moreover, if we let $\mathbb P = \mathbb H_{\omega_n} \times \cdots \times \mathbb C_{k}$, $\mathbb Q = \mathbb C_{k+1}$, $\kappa = \omega_{k+1}$, $\lambda = \omega_{n+k}$, $\mu = \omega_{n+k+1}$, and $S= S^k_{k+1}$, then the assumptions of Lemma \ref{weak diamond} are satisfied in $W$. By the product lemma of forcing, $V[G]$ can be viewed as a generic extension of $W$ by $\mathbb P \times \mathbb Q$. It follows that ${\rm w}\diamondsuit(S^k_{k+1})$ holds in $V[G]$, as required. 
\end{proof} 

Finally, from Theorem \ref{consistency} and Theorem \ref{A-nontrivial} we conclude the main result of this paper. 

\begin{theorem}\label{main-theorem} 
Let $n \geq 1$. Then it is relatively consistent with ${\rm ZFC}$ that $\lim^n \mathbf A\neq 0$.\qed
\end{theorem}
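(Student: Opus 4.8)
The plan is to deduce the statement immediately by synthesizing the two principal results already established, Theorem~\ref{A-nontrivial} and Theorem~\ref{consistency}. No new forcing and no new combinatorial construction are needed at this point; the final theorem is a synthesis of work done upstream.

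First I would invoke Theorem~\ref{consistency}, which produces, for the given $n\geq 1$, a model of $\ZFC$ in which $\mathfrak b = \mathfrak d = \aleph_n$ and ${\rm w}\diamondsuit(S^k_{k+1})$ holds for every $k<n$. Concretely this is the extension $V[G]$ obtained there by forcing with $\mathbb H_{\omega_n}\times \mathbb C_1 \times \cdots \times \mathbb C_n$ over a ground model of $\mathrm{GCH}$; the relative consistency with $\ZFC$ is precisely the assertion that such an extension exists. Next I would observe that the combinatorial principles just listed are exactly the hypotheses of Theorem~\ref{A-nontrivial}. Applying that theorem inside $V[G]$ yields $\lim^n \mathbf A \neq 0$ in $V[G]$. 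Since $V[G]$ is a model of $\ZFC$ in which $\lim^n \mathbf A \neq 0$, the relative consistency of this statement with $\ZFC$ follows, which is what is to be proved.

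The main obstacle is not located here at all: the genuine difficulty lies entirely upstream. The delicate part of Theorem~\ref{A-nontrivial} is the stepping-up Lemma~\ref{stepping-up}, where ${\rm w}\diamondsuit(S^n_{n+1})$ is used to diagonalize against all potential trivializations of the coherent sequence $\bar x^b$ built along the binary tree $2^{<\omega_{n+1}}$, thereby upgrading $\varphi_n(G)$ to $\varphi_{n+1}(G)$; and the delicate part of Theorem~\ref{consistency} is arranging, via the product forcing and Lemma~\ref{weak diamond}, that weak diamond holds simultaneously on each $S^k_{k+1}$ while the Hechler iteration pins $\mathfrak b = \mathfrak d$ at $\aleph_n$. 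Once those are in hand the final theorem is a one-line corollary, and the only care needed is to confirm that the hypotheses of Theorem~\ref{A-nontrivial} match verbatim the conclusions of Theorem~\ref{consistency}, which they do.
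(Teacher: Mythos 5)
Your proposal is correct and is exactly the paper's own argument: the paper derives Theorem~\ref{main-theorem} directly by combining Theorem~\ref{consistency} (which provides the model with $\mathfrak b = \mathfrak d = \aleph_n$ and ${\rm w}\diamondsuit(S^k_{k+1})$ for all $k<n$) with Theorem~\ref{A-nontrivial} applied inside that model. Your observation that all the real work lies upstream in Lemma~\ref{stepping-up} and Lemma~\ref{weak diamond} matches the structure of the paper precisely.
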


\bibliographystyle{plain} %{amsalpha}
\bibliography{library}
\end{document}